\newtheorem{proposition}{Proposition}[section]
\newtheorem{theorem}{Theorem}[section]
\newtheorem{lemma}{Lemma}[section]
\theoremstyle{definition}
\newtheorem{remark}{Remark}[section]
\newtheorem{definition}{Definition}[section]
\theoremstyle{definition}
\newtheoremstyle{blabla}
{3pt}
{3pt}
{}
{}
{\scshape}
{}
{.5em}
{}
\theoremstyle{blabla}
\newtheorem{assumption}{Assumption}[section]
\newtheorem{problem_b}{Problem}[section]
\newcommand{\dell}{\partial}
\newcommand{\dellmat}{\dell^{\bullet}}
\newcommand{\ale}{\mathcal{A}}
\newcommand{\dellale}{\dell^{\mathcal{A}}} 
\newcommand{\nablamat}{\nabla_{\Gamma(t)}}
\DeclareMathOperator{\dist}{dist}
\newcommand{\normt}[2]{\lvert #1 \rvert_{#2}}
\newcommand{\normbig}[2]{\bigl\lvert #1 \bigr\rvert_{#2}}
\newcommand{\Normstar}[2]{\lVert #1 \rVert_{*,#2}}
\newcommand{\B}{B}                          
\newcommand{\normal}{^{\textnormal{N}}}     
\newcommand{\pr}{\textnormal{Pr}}           
\newcommand{\prh}{\textnormal{Pr}_h}        
\newcommand{\wein}{\mathcal{H}}             
\newcommand{\Wh}{W_h}
\newcommand{\amat}{\partial^{\mathcal{A}}}
\newcommand{\amath}{\partial^{\mathcal{A}}_h}
\newcommand{\Btensor}{\mathcal{B}}
\newcommand{\diff}{\frac{\d}{\d t}}
\newcommand{\Ga}{\Gamma}
\newcommand{\Gat}{\Gamma(t)}
\newcommand{\GT}{\mathcal{G}_T}
\newcommand{\laplace}{\Delta}
\newcommand{\nbg}{\nabla_{\Gamma}}
\newcommand{\nbgh}{\nabla_{\Gamma_h}}
\newcommand{\mat}{\partial^{\bullet}}
\def \P {\mathcal{P}_h}
\newcommand{\Pt}{\widetilde{\mathcal{P}}_h}
\newcommand{\co}{continuous}
\def \d {\mathrm{d}}
\newcommand{\disp}{\displaystyle}
\newcommand{\eps}{\varepsilon}
\newcommand{\inv}{^{-1}}
\newcommand{\la}{\langle}
\newcommand{\nb}{\nabla}
\newcommand{\Om}{\Omega}
\newcommand{\op}{operator}
\newcommand{\pa}{\partial}
\newcommand{\R}{\mathbb{R}}
\newcommand{\ra}{\rangle}
\newcommand{\resp}{respectively}
\newcommand{\spn}{\textnormal{span}}
\newcommand{\st}{such that}
\def \t {(t)}
\newcommand{\Th}{\mathcal{T}_h}
\def \to {\rightarrow}
\newcommand{\vphi}{\varphi}
\def \nu {\textnormal{n}}
\begin{document}

\title{Higher--oder time discretizations with ALE finite elements for parabolic problems on evolving surfaces\footnote{This preprint was submitted to the IMA Journal of Numerical Analysis.}}

\author{Bal{\'a}zs Kov{\'a}cs\footnote{MTA-ELTE NumNet Research Group, P{\'a}zm{\'a}ny P. s{\'e}t{\'a}ny 1/C., 1117 Budapest, Hungary; e-mail address: koboaet@cs.elte.hu}, Christian Andreas Power Guerra\footnote{Mathematisches Institut, University of T\"{u}bingen, Auf der Morgenstelle 10., D-72076 T\"{u}bingen Germany; e-mail address: power@na.uni-tuebingen.de}}

\date{13. July 2014}


\maketitle

\begin{abstract}
{A linear evolving surface partial differential equation is first discretized in space by an arbitrary \linebreak Lagrangian Eulerian (ALE) evolving surface finite element method, and then in time either by a Runge--Kutta method, or by a backward difference formula. The ALE technique allows to maintain the mesh regularity during the time integration, which is not possible in the original evolving surface finite element method. Unconditional stability and optimal order convergence of the full discretizations is shown, for algebraically stable and stiffly accurate Runge--Kutta methods, and for backward differentiation formulae of order less than $6$. Numerical experiments are included, supporting the theoretical results.}
{full convergence, evolving surfaces, ESFEM, ALE, Runge--Kutta methods, BDF.}
\end{abstract}

\section{Introduction}

There are various approaches to solve parabolic problems on evolving surfaces. A starting point of the finite element approximation to (elliptic) surface partial differential equations is the paper of \cite{Dziuk88}, later this theory was extended to general parabolic equations on stationary surfaces by \cite{DziukElliott_SFEM}. They introduced the \emph{evolving surface finite element method} (ESFEM) to discretize parabolic partial differential equations on moving surfaces, c.f.\ \cite{DziukElliott_ESFEM}. They also gave optimal order error estimates in the $L^2$-norm, see \cite{DziukElliott_L2}. There is a survey type article by \cite{DziukElliott_acta}, which also serves as a rich source of details and references.

\medskip
Dziuk and Elliott also studied fully discrete methods, see e.g.\ \cite{DziukElliott_fulldiscr}. The numerical analysis of convergence of full discretizations with high order time integrators was first studied by \cite{DziukLubichMansour_rksurf}. They proved optimal order convergence for the case of algebraically stable implicit Runge--Kutta methods, and \cite{LubichMansourVenkataraman_bdsurf} proved optimal convergence for
backward differentiation formulae (BDF).

The ESFEM approach and convergence results were later extended to wave equations on evolving surfaces by \cite{LubichMansour_wave} and \cite{Mansour_GRK}. A unified presentation of ESFEM for parabolic problems and wave equations is given in \cite{diss_Mansour}.

\bigskip
As it was pointed out by Dziuk and Elliott, \textit{''A drawback of our method is the possibility of de\-ge\-ne\-rating grids. The prescribed velocity may lead to the effect, that the triangulation $\Ga_h(t)$ is distorted''}\footnote{quoted from Gerhard Dziuk and Charles M.\ Elliott from \cite{DziukElliott_ESFEM} Section 7.2}. To resolve this problem \cite{ElliottStyles_ALEnumerics} proposed an \emph{arbitrary Lagrangian Eulerian} (ALE) ESFEM approach, which in contrast to the (pure Lagrangian) ESFEM method, allows the nodes of the triangulation to move with a velocity which may not be equal to the surface (or material) velocity. They presented numerous examples where smaller errors can be achieved using a \emph{good} mesh.

Recently \cite{ElliottVenkataraman_ALEdiscrete} proved optimal order error bounds for the ALE ESFEM space discrete problems, and error bounds for the fully discrete schemes for the first and second-order backward differentation formulae. They also give numerous numerical experiments.

Arbitrary Lagrangian Eulerian FEM for moving domains were investigated by \cite{formaggia_nobile_alefem}. They also suggest some possible ways to define the new mesh if the movement of the boundary is given. \cite{nochetto_error,nochetto_stability} proved a-priori error estimates and time stability in a dis\co\ Galerkin setting.

\bigskip
This paper extends the convergence results of \cite{DziukLubichMansour_rksurf} for the Runge--Kutta discretizations, and the results of \cite{LubichMansourVenkataraman_bdsurf} the backward differentiation formulae, to the ALE framework and hence proves convergence of the fully discrete method suggested by \cite{ElliottStyles_ALEnumerics}.

We prove unconditional stability and convergence of these higher--order time discretizations, and also their optimal order convergence as a full discretization for evolving surface linear parabolic PDEs when coupled with the arbitrary Lagrangian Eulerian evolving surface finite element method as a space discretization. First, this is proved for stiffly accurate algebraically stable implicit Runge--Kutta methods (having the Radau IIA methods in mind).  Second, for the $k$-step backward differentiation formulae up to order five. Because of the lack of A-stability of the BDF methods of order greather than two, our proof requires a different techique than \cite{ElliottVenkataraman_ALEdiscrete}. Our results for BDF 1 and BDF 2 are matching theirs.

In the presentation we focus on the main differences compared to the previous results, and put less emphasis on those parts where minor modifications of the cited proofs are sufficient.

Our results are also true for the case of moving domains, however we will mostly stick to the evolving surface terminology.

\medskip
This paper is organised as follows.
In Section \ref{section_ALE_intro} we formulate the considered evolving surface parabolic problem, and describe the concept of arbitrary Lagrangian Eulerian methods together with other basic notions. The ALE weak formulation of the problem is also given.
In Section \ref{section_ALE FEM} we define the mesh approximating our moving surface and derive the semidiscrete version of the ALE weak form, which is equivalent to a system of ODEs. We also derive the ODE system resulting from a moving domain problem, which has the same properties. Then we recall some properties of the evolving matrices, and some estimates of bilinear forms. We also prove the analogous estimate for the new term appearing in the ALE formulation.
In Section \ref{sec:error-estim-impl} we prove stability of high order Runge--Kutta (R--K) methods applied to the ALE ESFEM semidiscrete problem, while Section \ref{section_BDF} is devoted to the corresponding results for the BDF methods.
Section \ref{section_errorbounds} contains the main results of this paper: the fully discrete methods, ALE ESFEM together with R--K or BDF method, have an unconditional and optimal order convergence both in space and time.
Finally, in Section \ref{section_numerics} we present numerical experiments, to illustrate our theoretical results.

\section{The arbitrary Langrangian Eulerian approach for evolving surface PDEs}
\label{section_ALE_intro}

In the following we consider an evolving closed hypersurface $\Ga\t$, $0 \leq t \leq T$, which moves with a given smooth velocity $v$.  Let $\mat u = \pa_{t} u + v \cdot \nbg u$
denote the material derivative of the function $u$, where $\nbg$ is the tangential gradient
given by $\nbg u = \nb u -\nb u \cdot \nu \nu$, with unit normal $\nu$. We denote by $\laplace_\Ga = \nbg\cdot\nbg$ the Laplace--Beltrami operator.  \par
We consider the following linear problem derived by \cite{DziukElliott_ESFEM}:
\begin{equation}\label{eq_ES-PDE-strong-form}
  \begin{cases}
    \phantom{.}\mat u(x,t) + u(x,t) \nb_{\Gat} \cdot v(x,t) - \laplace_{\Gat}
    u(x,t) = f(x,t)  & \textrm{ on } \Ga\t ,\\
    \phantom{\mat u(x,t) + u(x,t) \nb_{\Gat} \cdot v(x,t) - \laplace_{\Gat}
      x}u(x,0) = u_0(x) & \textrm{ on } \Ga(0).
  \end{cases}
\end{equation}
Basic and detailed references on evolving surface PDEs are \cite{DziukElliott_ESFEM,DziukElliott_acta,DziukElliott_L2} and \cite{diss_Mansour}.

For simplicity reasons we set in all chapters $f=0$, since the extension of our results to the inhomogeneous case are straightforward.

An important tool is the Green's formula (on closed surfaces), which takes the form
\begin{equation*}
    \int_\Ga \nbg z \cdot \nbg \phi = -\int_\Ga (\laplace_{\Ga} z) \phi
\end{equation*}
Finally, $\GT$ denotes the space--time surface, i.e.\ $\GT:= \cup_{t\in[0,T]}
\Ga\t\times\{t\}$.  We assume that $\GT\subset \R^{d+2}$ is a smooth
hypersurface (with boundary $\dell \GT = \big(\Ga(0) \times \{0\}\big) \cup \big(\Ga(T) \times \{T\}\big)$).

\bigskip
The weak formulation of this problem reads as
\begin{definition}[weak solution, \cite{DziukElliott_ESFEM} Definition 4.1]\label{def_ES-PDE-weak}
  A function $u\in H^1(\GT)$ is called a \emph{weak solution} of
  \eqref{eq_ES-PDE-strong-form}, if for almost every $t\in[0,T]$
    \begin{equation}\label{eq_ES-PDE-weak-form}
        \diff \int_{\Gat}\!\!\!\! u \vphi + \int_{\Gat}\!\!\!\! \nb_{\Gat} u \cdot \nb_{\Gat} \vphi = \int_{\Gat}\!\!\!\! u\mat \vphi
    \end{equation}
    holds for every  $\vphi \in H^1(\GT)$ and $u(.,0)=u_0$.
\end{definition}
For suitable $f$ and $u_{0}$ existence and uniqueness results, for the strong and the weak problem,
were obtained by \cite{DziukElliott_ESFEM}.

\subsection{The ALE map and ALE velocity and the corresponding weak form}


We assume that for each $t \in [0,T]$, $T>0$, $\Gamma^{m}\t\subset \R^{m+1}$
is an closed surface.  We call a subset $\Gamma^{m}\subset \R^{m+1}$ a
\emph{closed surface}, if $\Gamma$ is an oriented compact submanifold of
codimension $1$ without boundary.  We assume that there exists a smooth map
$\nu\colon \GT \to \R^{m+1}$ such that for each $t$ the restriction
\[
\nu_{t}\colon \Gamma(t)\to \R^{m+1},\quad \nu_{t}(x) := \nu( x,t)
\]
is the smooth normal field on $\Gamma(t)$.

\bigskip Now we shortly recall the surface description by diffeomorphic
parametrization, also used by \cite{DziukElliott_ESFEM}, and by
\cite{nochetto_stability}. An other important representation of the surface is
based on a signed distance function. For this we refer to
\cite{DziukElliott_ESFEM} (it is also described later in Section
\ref{subsection_lift}).

We assume that there exists a smooth map $ \Phi\colon \Gamma(0)\times [0,T] \to
\R^{m+1} $ which we call a \emph{dynamical system} or \emph{diffeomorphic
  parametrization} satisfying that
\[
\Phi_{t} \colon \Gamma(0) \to \Gamma\t, \qquad \Phi_{t}(y) := \Phi(y,t)
\]
is a diffeomorphism for every $t\in[0,T]$.  $(\Phi_{t})$ is called the \emph{flow} of
$\Phi$.  We observe:
\begin{itemize}
\item If $F\colon U\subset \R^{m}\to \Gamma(0)$ is a smooth parametrization of
  $\Gamma(0)$ then $F_{t}:= \Phi_{t}\circ F$ is a smooth parametrization of $\Gamma(t)$,
  hence the name diffeomorphic parametrization.
\item If we interpret $\Gamma(0)\times [0,T] \subset \R^{m+2}$ as a
  hypersurface, then $\Phi$ gives rise to a (submanifold) diffeomorphism
  \[
  \widetilde{\Phi}\colon \Gamma(0) \times [0,T] \to \mathcal{G}_{T},\quad
  \widetilde{\Phi}(y,t) := \bigl(\Phi_{t}(y),t\bigr).
  \]
\end{itemize}

\noindent The dynamical system $\Phi$ defines a (special) vector field $v$ and (special) time
derivative $\dellmat$ as follows: consider the differential equation (for $\Phi$)
\begin{equation}
  \label{eq_surface-velocity}
  \pa_t \Phi(\, .\, ,t) = v\bigl(\Phi(\, .\, ,t),t\bigr), \qquad \Phi(\, .\,
  ,0)=\textrm{Id}.
\end{equation}

\noindent The unique vector field $v$ is called the \emph{velocity of the
  surface evolution}, or the \emph{material velocity}. We assume, that the
material velocity is the same velocity as in
problem~\eqref{eq_ES-PDE-strong-form}.  It has the normal component $v\normal$.

\par
The time derivative $\dellmat$ is defined as follows (see e.g.\ \cite{DziukElliott_ESFEM} Section
2.2 or \cite{nochetto_stability} Section 1): for smooth $f
\colon \mathcal{G}_T \to \R$ and $x\in \Gamma\t$, such that $y\in \Gamma(0)$ for
which $\Phi_{t}(y)=x$, the \emph{material derivative} is defined as

\begin{equation}\label{eq_surface-derivative}
  \disp \mat f (x,t) := \left.\frac{\mathrm{d}}{\mathrm{d}t}\right\rvert_{(y,t)}
  f\circ \widetilde{\Phi}.
\end{equation}

\noindent Suppose that $f$ has a smooth extension $\bar{f}$ in an open neighborhood of
$\Gamma\t$, (\cite{DziukElliott_acta} has shown how to use the oriented distance
function to construct such extensions), then by the chain rule they obtained the
following identity for the material derivative:
\[
\disp \mat f (x,t) = \left.\frac{\dell \bar{f}}{\dell t}\right\rvert_{(x,t)} +
v(x,t) \cdot \nabla \bar{f} (x,t),
\]
which is clearly independent of the extension by \eqref{eq_surface-derivative}.

\begin{remark}
  An evolving surface $\Gamma(t)$ generally posses many different dynamical
  systems.  Consider for example the (constant) evolving surface
  $\Gamma(t)=\Gamma(0)=S^{m}\subset \R^{m+1}$ with the two (different) dynamical
  system $\Phi(x,t)=x$ and $\Psi(x,t)=\alpha(t)x$, where $\alpha\colon [0,T]\to
  O(m+1)$ is a smooth curve in the orthogonal matrices.
\end{remark}

\begin{definition}
    Let $\ale \neq \Phi$ be any other dynamical system for $\Gamma(t)$. It is
    called an \emph{arbitrary   Lagrangian Eulerian map} (ALE map). The
    associated velocity will be denoted by $w$, which we refer as the \emph{ALE velocity} and finally $\dellale$ denotes the ALE material derivative.
  \end{definition}

One can show that for all $t\in [0,T]$ and
$x\in \Gamma(t)$
\begin{equation}\label{eq_v-w_tangent}
    v(x,t) - w(x,t) \quad \text{is a tangential vector.}
\end{equation}

\bigskip
The formula for the differentiation of a parameter-dependent surface integral played a decisive role in the analysis of evolving surface problems. In the following lemma we will state its ALE version, together with the connection between the material derivative and ALE material derivative.
\begin{lemma}
    Let $\Ga\t$ be an evolving surface and $f$ be a function defined in $\mathcal{G}_T$, such that all the following quantities exist.
  \begin{enumerate}[(a)]
      \item (Leibniz formula \cite{DziukElliott_ESFEM}/ Reynolds transport identity
        \cite{nochetto_stability}) There holds
        \begin{align}
          \label{eq_leibniz}
          \frac{\mathrm{d}}{\mathrm{d}t}\int_{\Gamma\t}f  = \int_{\Gamma\t}
          \dellale f + f  \ \nabla_{\Gamma\t}\cdot w. 
        \end{align}
      \item There also holds
        \begin{align}
          \label{eq_mat-vs-ALE-mat}
          \dellale f &= \dellmat f + (w-v)\cdot \nablamat f.
        \end{align}
  \end{enumerate}
\end{lemma}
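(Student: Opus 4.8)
The plan is to establish (b) first, and then derive (a) from it together with the classical (material-velocity) Leibniz formula. For part (b), the key observation is that the ALE material derivative $\dellale$ is defined through the ALE flow with velocity $w$ in exactly the same manner as $\dellmat$ is defined through the flow with velocity $v$. Hence, using a smooth extension $\bar f$ of $f$ to a neighbourhood of $\Gamma\t$, both derivatives admit the representation $\dellmat f = \pa_t \bar f + v\cdot\nb\bar f$ and $\dellale f = \pa_t\bar f + w\cdot\nb\bar f$. Subtracting gives $\dellale f - \dellmat f = (w-v)\cdot\nb\bar f$. I would then invoke \eqref{eq_v-w_tangent}: since $w-v$ is tangential, its inner product with $\nb\bar f$ sees only the tangential part, so $(w-v)\cdot\nb\bar f = (w-v)\cdot\big(\nb\bar f - (\nb\bar f\cdot\nu)\nu\big) = (w-v)\cdot\nablamat f$, using $(w-v)\cdot\nu = 0$. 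This yields (b) and, in passing, confirms independence of the extension.

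For part (a), I would start from the standard transport identity for the material velocity, $\diff\int_{\Gamma\t} f = \int_{\Gamma\t}\dellmat f + f\,\nablamat\cdot v$, and substitute $\dellmat f = \dellale f - (w-v)\cdot\nablamat f$ from (b). It then remains to show that the correction terms collapse to $f\,\nablamat\cdot w$, i.e.\ that $\int_{\Gamma\t}\big[f\,\nablamat\cdot(w-v) + (w-v)\cdot\nablamat f\big] = 0$. By the surface product rule the integrand equals $\nablamat\cdot\big(f(w-v)\big)$, and since $w-v$ is tangential, the divergence theorem on the closed (boundaryless) surface $\Gamma\t$ makes this integral vanish.

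Alternatively, (a) can be proved directly by mimicking the proof of the classical formula: pull the integral back to the reference surface $\Gamma(0)$ along the ALE map, differentiate under the integral, and use that the area element $J_t$ of the ALE flow evolves as $\tfrac{\d}{\d t}J_t = (\nablamat\cdot w)\,J_t$ — the same computation as in the material case, with $w$ in place of $v$ — before pushing forward again. I expect the genuine subtlety to lie in part (b): the passage from the full gradient $\nb\bar f$ to the tangential gradient $\nablamat f$ hinges entirely on the tangentiality of $w-v$ from \eqref{eq_v-w_tangent}, which is the one ALE-specific ingredient; everything else is a faithful transcription of the material-velocity theory with $v$ replaced by $w$.
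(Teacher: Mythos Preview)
Your proposal is correct and follows essentially the same route as the paper: prove (b) first via a smooth extension and the tangentiality of $w-v$, then deduce (a) from the classical Leibniz formula combined with (b) and the surface divergence theorem (which the paper calls ``Green's identity for surfaces''). Your write-up in fact spells out the cancellation $\int_{\Gamma}\nablamat\cdot\big(f(w-v)\big)=0$ more explicitly than the paper does, and your alternative direct pull-back argument for (a) is a nice bonus but not needed.
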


\begin{proof}

At first we prove (b): consider an extension $\bar{f}$ of $f$. Use the chain rule for $\dellale f$ and $\dellmat f$ and note the identity (c.f.\ \eqref{eq_v-w_tangent})
  \[
  (w(.,t)-v(.,t))\cdot \nabla \bar{f}(.,t) = (w(.,t)-v(.,t))\cdot \nabla_{\Gamma} f(.,t).
  \]

  To prove (a) use the original Leibniz formula from \cite{DziukElliott_ESFEM}:
  \begin{align*}
    \frac{\mathrm{d}}{\mathrm{d}t}\int_{\Gamma}f  = \int_{\Gamma}
    \dellmat f + f \ \nabla_{\Gamma}\cdot v.
  \end{align*}
  Now use (b) and Greens identity for surfaces to complete the proof.
\end{proof}

\bigskip
Now we have everything at our hands to derive the ALE version of the weak form of the evolving surface PDE \eqref{eq_ES-PDE-strong-form}.
\begin{lemma}[ALE weak solution]\label{lemma_ALE-ES-PDE-weak}
    The arbitrary Lagrangian Eulerian weak solution for an evolv\-ing surface partial differential equation is a function $u\in H^1(\GT)$, if for almost every $t\in[0,T]$
    \begin{equation*}
        \diff \int_{\Gat}\!\!\!\! u \vphi + \int_{\Gat}\!\!\!\! \nb_{\Gat} u \cdot \nb_{\Gat} \vphi
        + \int_{\Gat}\!\!\!\!  u (w-v) \cdot \nb_{\Gat} \vphi = \int_{\Gat}\!\!\!\! u\amat \vphi
    \end{equation*}
    holds for every  $\vphi \in H^1(\GT)$ and $u(\, .\, ,0)=u_0$.  If $u$ solves
    equation~\eqref{eq_ES-PDE-weak-form} then $u$ is an ALE weak solution.
\end{lemma}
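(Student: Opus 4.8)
The plan is to obtain the ALE weak form directly from the standard weak form \eqref{eq_ES-PDE-weak-form}, the only new ingredient being the relation between the two material derivatives established in part (b) of the preceding lemma. Since the initial condition $u(\, .\, ,0)=u_0$ is identical in both formulations, the entire task reduces to rewriting the single term on the right-hand side of \eqref{eq_ES-PDE-weak-form}.

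First I would apply identity \eqref{eq_mat-vs-ALE-mat} to the test function $\vphi$ and solve it for the material derivative, namely
\begin{equation*}
\mat \vphi = \amat \vphi - (w-v)\cdot \nablamat \vphi .
\end{equation*}
This is legitimate for $\vphi \in H^1(\GT)$: by \eqref{eq_v-w_tangent} the difference $w-v$ is a smooth tangential field, so $(w-v)\cdot\nablamat\vphi$ is well defined for almost every $t$ and the extra surface integral appearing below indeed makes sense. Multiplying by $u$ and integrating over $\Gat$ yields
\begin{equation*}
\int_{\Gat} u\, \mat \vphi = \int_{\Gat} u\, \amat \vphi - \int_{\Gat} u\,(w-v)\cdot \nablamat \vphi .
\end{equation*}

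Next I would substitute this for the right-hand side of \eqref{eq_ES-PDE-weak-form}, giving
\begin{equation*}
\diff \int_{\Gat} u \vphi + \int_{\Gat} \nablamat u \cdot \nablamat \vphi = \int_{\Gat} u\, \amat \vphi - \int_{\Gat} u\,(w-v)\cdot \nablamat \vphi ,
\end{equation*}
and then move the last integral to the left-hand side. The result is precisely the asserted ALE weak form, valid for every $\vphi \in H^1(\GT)$ and almost every $t\in[0,T]$, so that any $u$ solving \eqref{eq_ES-PDE-weak-form} is an ALE weak solution.

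I do not anticipate a genuine obstacle: the statement is essentially an algebraic reformulation, and all the analytic content is carried by \eqref{eq_mat-vs-ALE-mat}, which is already proved. The only points requiring a word of care are that the additional term $\int_{\Gat} u\,(w-v)\cdot\nablamat\vphi$ is finite and that the manipulation is compatible with the ``almost every $t$'' quantifier; both follow from $u,\vphi\in H^1(\GT)$ together with the smoothness and tangentiality of $w-v$.
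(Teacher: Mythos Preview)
Your proposal is correct and follows essentially the same approach as the paper: substitute the relation $\mat\vphi = \amat\vphi + (v-w)\cdot\nbg\vphi$ from \eqref{eq_mat-vs-ALE-mat} into the right-hand side of \eqref{eq_ES-PDE-weak-form} and rearrange. Your added remarks on the well-definedness of the extra term via \eqref{eq_v-w_tangent} and the $H^1(\GT)$ regularity are a helpful elaboration, but the argument is the same.
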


\begin{proof}
    We start by substituting the material derivative by the ALE material
    derivative in \eqref{eq_ES-PDE-weak-form}, using the relation
    \eqref{eq_mat-vs-ALE-mat}, connecting the different material derivatives
    (c.f.\ \eqref{eq_v-w_tangent}), i.e.\ by putting
    \begin{equation*}
        \mat \vphi = \amat \vphi + (v-w) \cdot \nbg \vphi
    \end{equation*}
    into the weak form, and rearranging the terms, we get the desired formulation.
\end{proof}

\section{The ALE finite element discretization}
\label{section_ALE FEM}

This section is devoted to the spatial semidiscretization of the parabolic moving surface PDE with the ALE version of the evolving surface finite element method, the ESFEM was developed by \cite{DziukElliott_ESFEM}. In the original case the nodes were moving only with the material velocity along the surface, which could lead to degenerated meshes.
One can maintain the good properties of the initial mesh by having additional tangential velocity

The ALE ESFEM discretization will lead to a system of ordinary differential equations (ODEs) with time dependent matrices. We will prove basic properties of those matrices, which will be one of our main tools to prove stability of time discretizations and convergence of full discretizations. We will also recall the lifting \op\ and its properties introduced by \cite{DziukElliott_ESFEM}, which enables us to compare functions from the discrete and \co\ surface.

\subsection{ALE finite elements for evolving surfaces}
\label{subsection_ALE-ESFEM}

First, the initial surface $\Gamma(0)$ is approximated by a triangulated one denoted
by $\Ga_h(0)$, which is given as
\begin{equation*}
    \Ga_h(0) := \bigcup_{E(0)\in \Th(0)} E(0).
\end{equation*}
Let $a_i(0)$, ($i=1,\ldots, N$), denote the initial nodes lying on the initial
\co\ surface.  Now the nodes are evolved with respect to the ALE map $\ale$,
i.e.\ $a_{i}(t):= \ale\bigl(a_{i}(0),t\bigr)$.  Obviously they remain on the
\co\ surface $\Gat$ for all $t$.  Therefore the smooth surface $\Gat$ is
approximated by the triangulated one denoted by $\Ga_h(t)$, which is given as
\begin{equation*}
    \Ga_h(t) := \bigcup_{E(t)\in \Th(t)} E(t).
\end{equation*}
We always assume that the (evolving) simplices $E(t)$ are forming an admissible
triangulation (c.f.\ \cite{DziukElliott_ESFEM}) $\Th(t)$ with $h$ denoting the maximum
diameter.

\medskip
The discrete tangential gradient on the discrete surface $\Ga_h\t$ is given by
\begin{equation*}
    \nb_{\Ga_h\t} f := \nb f - \nb f \cdot \nu_h \nu_h = \prh (\nb f),
\end{equation*}
understood in a piecewise sense, with $\nu_h$ denoting the normal to $\Ga_h(t)$
and $\prh := I - \nu_{h} \nu_{h}^{T} $.

\bigskip
For every $t\in[0,T]$ we define the finite element subspace
\begin{equation*}
    S_h\t := \big\{ \phi_h \in C(\Ga_h\t) \, \, \big| \, \,  \phi_h|_E \textrm{ is linear, for all } E\in \Th\t \big\}.
\end{equation*}

\noindent The moving basis functions $\chi_j$ are defined as $\chi_j(a_i\t,t) = \delta_{ij}$ for all $i,j = 1, 2, \dotsc, N$, and hence
\begin{equation*}
    S_h\t = \spn\big\{ \chi_1( \, . \,,t), \chi_2( \, . \,,t), \dotsc, \chi_N( \, . \,,t) \big\}.
\end{equation*}

\noindent We continue with the definition of the interpolated velocities on the discrete surface $\Ga_h\t$:
\begin{equation*}
    V_h( \, . \,,t) = \sum_{j=1}^N v(a_j\t,t) \chi_j( \, . \,,t), \qquad \quad
    \Wh( \, . \,,t) = \sum_{j=1}^N w(a_j\t,t) \chi_j( \, . \,,t)
\end{equation*}
are the discrete velocity, and the discrete ALE velocity, \resp. The discrete material derivative, and its ALE version is given by
\begin{equation*}
    \mat_h \phi_h = \pa_t \phi_h + V_h \cdot \nb \phi_h, \qquad \quad
    \amath \phi_h = \pa_t \phi_h + \Wh \cdot \nb \phi_h.
\end{equation*}

\bigskip
In this setting the key \textit{transport property} derived by \cite{DziukElliott_ESFEM} Proposition 5.4, is the following
\begin{equation}\label{eq_ES-transport-prop}
    \amath \chi_k = 0 \qquad \textrm{for} \quad k=1,2,\dotsc,N.
\end{equation}

\bigskip
The spatially discrete ALE problem for evolving surfaces is formulated in
\begin{problem_b}[Semidiscretization in space]\label{problem_ESFEM-semidiscrete}
    Find $U_h\in S_h\t$ \st\
{\setlength\arraycolsep{.13889em}
\begin{eqnarray*}
        \diff && \!\!\int_{\Ga_h\t}\!\! U_h \phi_h
        + \int_{\Ga_h\t}\!\! \nabla_{\Gamma_h\t} U_h  \cdot \nabla_{\Gamma_h\t}\phi_h \\
        + && \!\!\int_{\Ga_h\t}\!\! U_h (\Wh-V_h)\cdot \nabla_{\Gamma_h\t}\phi_h =
         \int_{\Ga_h\t}\!\! U_h \amath \phi_h,  \qquad (\forall \phi_h \in S_h\t),
\end{eqnarray*}}
    with the initial condition $U_h( \, . \,,0)=U_h^0\in S_h(0)$ is a sufficient approximation to $u_0$.
\end{problem_b}

The ODE form of the above problem can be derived by setting
\begin{equation*}
    U_h( \, . \,,t) = \sum_{j=1}^N \alpha_j\t \chi_j( \, . \,,t)
\end{equation*}
and $\phi_h=\chi_j$ and using the transport property for evolving surfaces \eqref{eq_ES-transport-prop}.

\begin{proposition}[ODE system for evolving surfaces]\label{prop_ODE-system-ES}
    The spatially semidiscrete problem is equivalent to the ODE system for $\alpha\t=(\alpha_j\t)\in\R^N$
    \begin{equation}\label{eq_ES-ODE}
        \disp
        \begin{cases}
            \disp\diff \big(M\t \alpha\t\big) + A\t \alpha\t + \B\t \alpha\t = 0 \\
            \disp\phantom{\diff \big(M\t \alpha\t\big) + A\t \alpha\t + \B\t }\alpha(0) = \alpha_0
        \end{cases}
    \end{equation}
    where $M\t$ and $A\t$ are the evolving mass and stiffness matrices defined as
    \begin{equation*}
        M(t)_{kj} = \int_{\Ga_h\t}\!\!\!\! \chi_j \chi_k, \qquad A(t)_{kj} = \int_{\Ga_h\t}\!\!\!\! \nb_{\Ga_h\t}\chi_j \cdot \nb_{\Ga_h\t } \chi_k,
    \end{equation*}
    and the evolving matrix $\B\t$ is given by
    \begin{equation}\label{eq_new-term-W}
        \B(t)_{kj} = \int_{\Ga_h\t}\!\!\!\! \chi_j (\Wh-V_h)\cdot \nb_{\Ga_h\t } \chi_k.
    \end{equation}
\end{proposition}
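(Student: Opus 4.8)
The plan is to substitute the nodal ansatz $U_h(\cdot,t)=\sum_{j=1}^N\alpha_j(t)\chi_j(\cdot,t)$ into the semidiscrete weak form and to use that $\{\chi_1(\cdot,t),\dots,\chi_N(\cdot,t)\}$ is a basis of $S_h(t)$. Consequently the weak identity holds for every $\phi_h\in S_h(t)$ if and only if it holds for each $\phi_h=\chi_k$, $k=1,\dots,N$, which reduces the variational problem to $N$ scalar equations and yields the equivalence in both directions.

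First I would dispatch the three left-hand-side terms. Because each $\alpha_j(t)$ is a scalar, it factors out of the spatial integrals and the remaining integrals are exactly the matrix entries: $\int_{\Gamma_h(t)}U_h\chi_k=\sum_j\alpha_j(t)\int_{\Gamma_h(t)}\chi_j\chi_k=(M(t)\alpha(t))_k$, so the first term is $\diff(M(t)\alpha(t))_k$. The crucial bookkeeping point is to keep the time derivative outside the integral: the moving domain $\Gamma_h(t)$ and the time dependence of the product $\chi_j\chi_k$ are both absorbed into the scalar entry $M(t)_{kj}$, so no Leibniz or transport term is generated at this stage and the product rule applied to the assembled vector $M(t)\alpha(t)$ suffices. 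The second and third terms collapse in the same way to $(A(t)\alpha(t))_k$ and $(B(t)\alpha(t))_k$, using the definitions of $A(t)_{kj}$ and of $B(t)_{kj}$ in \eqref{eq_new-term-W}.

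The decisive step is the right-hand side $\int_{\Gamma_h(t)}U_h\,\amath\chi_k$. Expanding the ansatz gives $\sum_j\alpha_j(t)\int_{\Gamma_h(t)}\chi_j\,\amath\chi_k$, and here I would invoke the transport property \eqref{eq_ES-transport-prop}, namely $\amath\chi_k=0$, which annihilates the entire right-hand side. This is the single non-algebraic ingredient of the proof: the ALE basis functions are flow-invariant under the discrete ALE velocity $W_h$, so all of the dynamics are carried by the nodal coefficients $\alpha_j(t)$ alone. Assembling the $k$-th scalar equation into vector form then gives $\diff(M(t)\alpha(t))+A(t)\alpha(t)+B(t)\alpha(t)=0$, and matching $U_h^0=\sum_j(\alpha_0)_j\chi_j(\cdot,0)$ fixes $\alpha(0)=\alpha_0$. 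I expect no genuine obstacle beyond resisting the temptation to differentiate the basis functions in the first term; keeping $\diff$ outside the integral and letting the transport property clear the right-hand side is exactly what produces the clean form \eqref{eq_ES-ODE}.
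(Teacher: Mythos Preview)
Your proposal is correct and follows exactly the approach indicated in the paper: substitute the ansatz $U_h=\sum_j\alpha_j(t)\chi_j(\cdot,t)$, test with $\phi_h=\chi_k$, and use the transport property \eqref{eq_ES-transport-prop} to kill the right-hand side. The paper does not spell out the details but simply states that the proof is analogous to the corresponding one in \cite{DziukLubichMansour_rksurf}, and your write-up is precisely that argument.
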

The proof of this proposition is analogous to the corresponding one by \cite{DziukLubichMansour_rksurf}.

\begin{remark}
  In the original ESFEM setting there was no direct involvement of velocities, but in the ALE formulations there is. We remark here that since the normal components of the ALE and material velocity are equal, during computations one can work only with the difference of the two velocities, i.e.\ the additional tangential component of the ALE velocity. We only keep the above formulation to leave the presentation plain and simple.
\end{remark}

\subsection{ALE finite elements for moving domains}
\label{subsection_ALE-MDFEM}

However our main interest is evolving surface PDEs, our results are also valid for moving domain partial differential equations. We will see that the corresponding ODE system of ALE finite element semidiscretization of such problems are coinciding with the ODE problem for evolving surface PDEs, \eqref{eq_ES-ODE}. Therefore we shortly describe how to derive this system.

\bigskip
Let us consider the following parabolic partial differential equation over the rectangular moving domain $\Om(t)$:

\begin{equation}\label{eq_MD-PDE-strong}
    \begin{cases}
        \phantom{.}\mat u(x,t) + u(x,t) \nb \cdot v(x,t) - \laplace u(x,t) = f \phantom{u_0} \quad \textrm{ in } \Om\t,\\
        \phantom{\mat u(x,t) + u(x,t) \nb \cdot v(x,t) - \laplace x}u( \, . \,,t) = u_0 \phantom{f} \quad \textrm{ in } \Om(0),
    \end{cases}
\end{equation}
with homogeneous Dirichlet boundary conditions for all $t\in[0,T]$.

\bigskip
The moving domain FEM is defined just as usual, but the nodes are moving with the given ALE velocity: $\Th\t$ is an admissible triangulation of the moving domain $\Om\t$, with moving nodes $a_i\t$ for $t\in[0,T]$. Therefore we have for every $t\in[0,T]$ the finite element subspace $S_h\t$ consisting of piecewise linear functions, and
\begin{equation*}
    S_h\t = \spn\big\{ \chi_1( \, . \,,t), \chi_2( \, . \,,t), \dotsc, \chi_N( \, . \,,t) \big\},
\end{equation*}
where $\chi_j(a_i\t,t) = \delta_{ij}$, and vanishing at the boundary.

For domains the tangential gradient reduces to the usual gradient. The interpolated velocities of the discrete moving domain $\Om_h\t$, and hence the discrete material derivatives, are defined again by using the finite element interpolants. The \textit{transport property} is also remaining the same in the moving domain finite element setting.

\bigskip
The spatially discrete ALE problem for moving domains is formulated in:
\begin{problem_b}[Semidiscretization in space]\label{problem_MDFEM-semidiscrete}
    Find $U_h( \, . \,,t)\in S_h\t$ \st\
    \begin{eqnarray}\label{eq_MD ALE-spat-discrete}
        \diff \int_{\Om_h\t} U_h \phi_h + \int_{\Om_h\t} \nb U_h \cdot \nb \phi_h
        + \int_{\Om_h\t} U_h (\Wh-V_h)\cdot \nb \phi_h &=& \int_{\Om_h\t} U_h \amath\phi_h, \\
        & & \qquad (\forall \phi_h\in S_h\t)\nonumber
    \end{eqnarray}
    with the initial condition $U_h( \, . \,,0)=U_h^0\in S_h(0)$ is a sufficient approximation to $u_0$, by the definition of the basis functions $U_h( \, . \,,t)$ satisfies the homogeneous Dirichlet boundary condition.
\end{problem_b}

The ODE form of the above problem can be derived analogously, and yields:
\begin{proposition}[ODE system for moving domains]\label{prop_ODE-system-MD}
    The spatially semidiscrete problem \eqref{eq_MD ALE-spat-discrete} is equivalent to the ODE system for $\alpha\t=(\alpha_j\t)\in\R^N$
    \begin{equation}\label{eq_MD-ODE}
        \disp
        \begin{cases}
            \disp\diff \big(M\t \alpha\t\big) + A\t \alpha\t + \B\t \alpha\t = 0 \\
            \disp\phantom{\diff \big(M\t \alpha\t\big) + A\t \alpha\t + \B\t }\alpha(0) = \alpha_0
        \end{cases}
    \end{equation}
    where $M\t$ and $A\t$ are the evolving mass and stiffness matrices defined as
    \begin{equation*}
        M(t)_{kj} = \int_{\Om_h\t}\!\!\!\! \chi_j \chi_k, \qquad A(t)_{kj} = \int_{\Om_h\t}\!\!\!\! \nb \chi_j \cdot \nb \chi_k,
    \end{equation*}
    and the evolving matrix $\B\t$ is given by
    \begin{equation*}
        \B(t)_{kj} = \int_{\Om_h\t}\!\!\!\! \chi_j (\Wh-V_h)\cdot \nb \chi_k.
    \end{equation*}
\end{proposition}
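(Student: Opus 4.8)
The plan is to follow verbatim the derivation already indicated for Proposition~\ref{prop_ODE-system-ES}, since the only structural change is that the tangential gradient $\nb_{\Ga_h\t}$ is replaced by the ordinary gradient $\nb$ and the surface integrals are replaced by volume integrals over $\Om_h\t$. First I would insert the nodal expansion $U_h(\,.\,,t)=\sum_{j=1}^N \alpha_j\t\,\chi_j(\,.\,,t)$ into the semidiscrete weak form \eqref{eq_MD ALE-spat-discrete} and test successively against $\phi_h=\chi_k$, $k=1,\dots,N$; by linearity of the form in $\phi_h$ this is equivalent to testing against every $\phi_h\in S_h\t$.

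With this choice the three left-hand terms collapse into matrix--vector products. The mass term gives $\int_{\Om_h\t} U_h\chi_k = \sum_j \alpha_j M(t)_{kj} = (M\t\alpha\t)_k$, so that $\diff\int_{\Om_h\t}U_h\chi_k = \big(\diff(M\t\alpha\t)\big)_k$; here it is essential that the whole product $M\t\alpha\t$, and not $\alpha\t$ alone, sits under the time derivative, because both the coefficients $\alpha_j$ and the entries $M(t)_{kj}$ depend on $t$. Likewise the diffusion term yields $\int_{\Om_h\t}\nb U_h\cdot\nb\chi_k = (A\t\alpha\t)_k$ and the ALE transport term yields $\int_{\Om_h\t}U_h(\Wh-V_h)\cdot\nb\chi_k = (\B\t\alpha\t)_k$, with $M$, $A$, $\B$ exactly the matrices stated in the proposition.

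The right-hand side is handled by the transport property, which---as noted in the text---carries over unchanged to the moving domain setting: since $\amath\chi_k=0$, the term $\int_{\Om_h\t}U_h\amath\chi_k$ vanishes identically. Collecting these contributions produces $\big(\diff(M\t\alpha\t)\big)_k + (A\t\alpha\t)_k + (\B\t\alpha\t)_k = 0$ for every $k$, which is precisely the vector ODE \eqref{eq_MD-ODE}; the identification $\alpha(0)=\alpha_0$ of the initial data is immediate from $U_h(\,.\,,0)=U_h^0$.

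The step needing the most care---and the only genuine difference from the closed-surface case---is the role of the homogeneous Dirichlet boundary condition. I would check that the basis functions $\chi_k$ vanish on $\pa\Om_h\t$ (which they do by construction), so that $U_h$ and all admissible test functions lie in the correct space and no boundary integrals are generated when the Reynolds transport theorem is applied to $\diff\int_{\Om_h\t}U_h\chi_k$. Granting this, the equivalence is an exact algebraic reformulation rather than an approximation, and no further estimates are required.
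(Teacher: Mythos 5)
Your proposal is correct and takes essentially the same route as the paper, which obtains the ODE system by inserting the nodal expansion $U_h=\sum_j\alpha_j\t\chi_j$, testing with the moving basis functions $\phi_h=\chi_k$, and invoking the transport property $\amath\chi_k=0$ (the paper merely states that the moving-domain derivation is analogous to the evolving-surface one of Proposition~\ref{prop_ODE-system-ES}). Your added care about keeping the whole product $M\t\alpha\t$ under the time derivative and about the basis functions vanishing on $\pa\Om_h\t$ is consistent with, and slightly more explicit than, the paper's sketch.
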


\bigskip
\begin{remark}\label{remark_coincidence_of_ODEs}
    A very important point is, that formally the ODE problems for evolving surface and moving domain problems, \eqref{eq_ES-ODE} and \eqref{eq_MD-ODE}, are coincident. Furthermore, the crucial properties of the matrices are also the same for both cases. In the rest of the paper we will use the terminology of the evolving surface PDEs, but clearly our results hold for moving domain problems as well.
\end{remark}

\subsection{Properties of the evolving matrices}
Clearly the evolving stiffness matrix is symmetric, positive semi-definite and the mass matrix is symmetric, positive definite. Through the paper we will work with the norm and semi-norm introduced by \cite{DziukLubichMansour_rksurf}:
\begin{equation}\label{eq_normdefs}
    |z\t|_{M\t} = \|Z_h\|_{L^2(\Ga_h\t)} \qquad \textrm{and} \qquad  |z\t|_{A\t} = \|\nbgh Z_h\|_{L^2(\Ga_h\t)},
\end{equation}
for arbitrary $z\t\in \R^N$, where $Z_h( \, . \,,t)=\sum_{j=1}^N z_j\t \chi_j( \, . \,,t)$.

\bigskip
A very important lemma in our analysis is the following:
\begin{lemma}[\cite{DziukLubichMansour_rksurf} Lemma 4.1 and \cite{LubichMansourVenkataraman_bdsurf} Lemma 2.2]
    There are constants $\mu, \kappa$ (independent of $h$) \st
    {\setlength\arraycolsep{.13889em}
    \begin{eqnarray}
        \label{eq_mtxlemma-M} z^T \big( M(s) - M\t\big) y &\leq& (e^{\mu(s-t)}-1) |z|_{M\t}|y|_{M\t} \\
        \label{eq_mtxlemma-Minv} z^T \big( M\inv(s) - M\inv\t\big) y &\leq& (e^{\mu(s-t)}-1) |z|_{M\inv\t}|y|_{M\inv\t}  \\
        \label{eq_mtxlemma-A} z^T \big( A(s) - A\t\big) y &\leq& (e^{\kappa(s-t)}-1) |z|_{A\t}|y|_{A\t}
    \end{eqnarray}}
    for all $y,z\in \R^N$ and $s,t \in [0,T]$.
\end{lemma}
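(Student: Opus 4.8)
The plan is to reduce all three estimates to a single differential inequality of the common shape $\diff\, z^T X\t y \leq c\,|z|_{X\t}|y|_{X\t}$, valid at every time $t$ (with $X=M,M\inv,A$ and $c=\mu$ or $\kappa$), and then to turn this pointwise bound into the stated exponential bound by integrating in time and controlling the growth of the norm $|z|_{X\t}$ itself through a Gronwall argument. The engine for the differentiation is the discrete ALE Leibniz formula, the discrete counterpart of \eqref{eq_leibniz}, namely $\diff \int_{\Ga_h\t} f = \int_{\Ga_h\t} \amath f + f\, \nbgh\cdot\Wh$, used together with the transport property \eqref{eq_ES-transport-prop}.

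First I would treat the mass matrix, which fixes the template. For fixed coefficient vectors $z,y\in\R^N$ put $Z_h=\sum_j z_j\chi_j$ and $Y_h=\sum_j y_j\chi_j$; since the coefficients are constant and $\amath\chi_j=0$, the transport property yields $\amath Z_h=\amath Y_h=0$, hence $\amath(Z_hY_h)=0$. Writing $z^T M\t y=\int_{\Ga_h\t}Z_hY_h$ and applying the Leibniz formula, the material-derivative term drops out, so that
\[
\diff\, z^T M\t y = \int_{\Ga_h\t} Z_hY_h\,\nbgh\cdot\Wh \leq \mu\,|z|_{M\t}|y|_{M\t},
\]
where $\mu:=\sup_t\|\nbgh\cdot\Wh\|_{L^\infty(\Ga_h\t)}$. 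The point behind "independent of $h$" is that $\Wh$ is the Lagrange interpolant of the smooth velocity $w$, so $\nbgh\cdot\Wh$ is bounded in $L^\infty$ uniformly in $h$ by standard interpolation estimates; the same remark supplies $\kappa$ below. Taking $y=z$ gives $\diff\,|z|_{M\t}^2\leq\mu\,|z|_{M\t}^2$, whence Gronwall yields $|z|_{M(\tau)}\leq e^{\mu(\tau-t)/2}|z|_{M\t}$ for $\tau\geq t$. Inserting this into
\[
z^T\big(M(s)-M\t\big)y = \int_t^s \frac{\d}{\d\tau}\big(z^T M(\tau) y\big)\,\d\tau \leq \int_t^s \mu\,|z|_{M(\tau)}|y|_{M(\tau)}\,\d\tau
\]
and evaluating $\int_t^s \mu\, e^{\mu(\tau-t)}\,\d\tau$ produces exactly the factor $e^{\mu(s-t)}-1$, which is \eqref{eq_mtxlemma-M}; the case $s<t$ is symmetric. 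Estimate \eqref{eq_mtxlemma-Minv} then follows from $\diff\, M\inv\t=-M\inv\t(\diff M\t)M\inv\t$: with $\tilde z:=M\inv\t z$, $\tilde y:=M\inv\t y$ one has $z^T(\diff M\inv\t)y=-\tilde z^T(\diff M\t)\tilde y$, bounded by $\mu\,|\tilde z|_{M\t}|\tilde y|_{M\t}$ from the mass-matrix computation; since $|\tilde z|_{M\t}^2=z^T M\inv\t z=|z|_{M\inv\t}^2$, this is again of the template shape with the same $\mu$, and the Gronwall-plus-integration step carries over verbatim.

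The stiffness matrix \eqref{eq_mtxlemma-A} is where the real work lies, and the step I expect to be the main obstacle. Here the transport property does not make the derivative vanish, because $\amath(\nbgh\chi_k)\neq 0$; instead one needs the transport formula for the tangential gradient,
\[
\diff \int_{\Ga_h\t}\nbgh Z_h\cdot\nbgh Y_h = \int_{\Ga_h\t}\big(\nbgh(\amath Z_h)\cdot\nbgh Y_h + \nbgh Z_h\cdot\nbgh(\amath Y_h) + \nbgh Z_h\cdot\Btensor(\Wh)\,\nbgh Y_h\big),
\]
where $\Btensor(\Wh)=(\nbgh\cdot\Wh)\,I-\big(\nbgh\Wh+(\nbgh\Wh)^T\big)$ is the symmetric tensor coming from differentiating the surface metric and measure. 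The transport property annihilates the first two terms, leaving $\diff\, z^T A\t y=\int_{\Ga_h\t}\nbgh Z_h\cdot\Btensor(\Wh)\,\nbgh Y_h\leq\kappa\,|z|_{A\t}|y|_{A\t}$ with $\kappa:=\sup_t\|\Btensor(\Wh)\|_{L^\infty(\Ga_h\t)}$, finite and $h$-independent because it involves only $\Wh$ and its first tangential derivatives, again controlled by interpolation of the smooth $w$. The remaining Gronwall argument is identical to the mass-matrix case. The delicate points are therefore the correct derivation of the tensor $\Btensor(\Wh)$ (differentiating both the integrand and the deforming metric) and the verification that its $L^\infty$ bound does not degenerate as $h\to 0$; everything else is a repetition of the mass-matrix template.
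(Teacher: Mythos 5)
Correct, and essentially the same argument as the source this lemma is taken from: the paper itself gives no proof, quoting \cite{DziukLubichMansour_rksurf} (Lemma 4.1) and \cite{LubichMansourVenkataraman_bdsurf} (Lemma 2.2), and those proofs are exactly your scheme --- differentiate $z^T M(t)y$, $z^T M^{-1}(t)y$, $z^T A(t)y$ via the discrete (here ALE) Leibniz rule and the transport property $\amath\chi_k=0$, bound the resulting $\nabla_{\Gamma_h}\cdot W_h$ and $\mathcal{B}_h(W_h)$ terms in $L^\infty$ uniformly in $h$ (interpolation of the smooth ALE velocity $w$), and integrate with Gronwall to produce the factor $e^{\mu(s-t)}-1$. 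One cosmetic remark: your argument, like the original, really proves the case $s\ge t$; for $s<t$ the signed form stated in the paper cannot hold literally (its right-hand side is negative while the left-hand side changes sign under $y\mapsto -y$), so the ``symmetric'' case actually gives the bound with $|s-t|$ and norms at $\min(s,t)$ --- an imprecision inherited from the statement, not a gap in your proof.
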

We will use this lemma with $s$ close to $t$, and then $(e^{\mu(s-t)}-1)\leq 2 \mu (s-t)$ holds. In particular for $y=z$ we have
\begin{align}
  \label{eq:11}  \normt{z}{M(s)}^{2}  \leq (1+2\mu   (t-s)) \normt{z}{M(t)}^{2}, \\
  \label{eq:11b} \normt{z}{A(s)}^{2}  \leq (1+2\kappa(t-s)) \normt{z}{A(t)}^{2}.
\end{align}

\bigskip
The following technical lemma will play a crucial role in this paper.
\begin{lemma}\label{lemma-B_estimate}
    Let $y,z\in \R^N$ and $t \in [0,T]$ be arbitrary, then
    \begin{equation}
        \big| \la \B\t z | y\ra \big| \leq c_{\ale} |z|_{M\t} |y|_{A\t},
    \end{equation}
    where the constant $c_{\ale}>0$ is depending only on the differences of the velocities, and independent of $h$.
\end{lemma}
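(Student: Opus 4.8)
The plan is to recognize $\la \B\t z | y\ra$ as a bilinear form evaluated on the finite element functions $Z_h = \sum_j z_j\t \chi_j$ and $Y_h = \sum_k y_k\t \chi_k$, and then to estimate it directly by H\"older's inequality after extracting the velocity difference in the $L^\infty$-norm. First I would expand the inner product using the definition \eqref{eq_new-term-W},
\begin{equation*}
  \la \B\t z | y\ra = \sum_{k,j} y_k\, \B(t)_{kj}\, z_j = \int_{\Ga_h\t} Z_h\, (\Wh - V_h)\cdot \nbgh Y_h ,
\end{equation*}
where the double sum collapses into the two finite element functions by linearity of the integral and of $\nbgh$. This identity is the crux of the argument: it converts a statement about the matrix $\B\t$ into one about $L^2$- and $L^\infty$-norms of functions on $\Ga_h\t$.

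Next I would apply H\"older's inequality, pulling out the velocity difference in the $L^\infty$-norm and keeping $Z_h$ and $\nbgh Y_h$ in $L^2$:
\begin{equation*}
  \big| \la \B\t z | y\ra \big| \leq \|\Wh - V_h\|_{L^\infty(\Ga_h\t)}\, \|Z_h\|_{L^2(\Ga_h\t)}\, \|\nbgh Y_h\|_{L^2(\Ga_h\t)} .
\end{equation*}
By the norm definitions \eqref{eq_normdefs}, the last two factors are exactly $\normt{z}{M\t}$ and $\normt{y}{A\t}$, so the whole estimate reduces to bounding the $L^\infty$-norm of $\Wh - V_h$ by a constant that is independent of $h$ and involves only the velocities.

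The only point that genuinely has to be checked is this last bound, and it is where I expect the (mild) obstacle to lie. Since $\Wh - V_h = \sum_{j} (w-v)(a_j\t,t)\, \chi_j(\,\cdot\,,t)$ is the Lagrange interpolant of the smooth velocity difference $w-v$, and the nodal basis functions are nonnegative and form a partition of unity ($\chi_j \geq 0$ and $\sum_j \chi_j \equiv 1$ on $\Ga_h\t$), I would estimate pointwise
\begin{equation*}
  |(\Wh - V_h)(x,t)| \leq \sum_j |(w-v)(a_j\t,t)|\, \chi_j(x,t) \leq \max_j |(w-v)(a_j\t,t)| \leq \|w-v\|_{L^\infty(\GT)} .
\end{equation*}
Hence $c_{\ale} := \|w-v\|_{L^\infty(\GT)}$ depends only on the velocity difference and not on $h$, which completes the argument. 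The subtlety to keep in mind is that the velocities are evaluated at the nodes $a_j\t$; these lie on the continuous surface $\Gat$ by construction ($a_i\t = \ale(a_i(0),t)$), so $\|w-v\|_{L^\infty(\GT)}$ is precisely the right quantity, and the uniformity in $h$ follows from the partition-of-unity property rather than from any mesh-dependent interpolation constant.
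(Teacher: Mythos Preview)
Your argument is correct and follows essentially the same route as the paper: expand $\la \B\t z\mid y\ra$ as the integral $\int_{\Ga_h\t} Z_h(\Wh-V_h)\cdot\nbgh Y_h$, pull out $\|\Wh-V_h\|_{L^\infty(\Ga_h\t)}$, and apply Cauchy--Schwarz to recover $|z|_{M\t}|y|_{A\t}$. Your partition-of-unity argument for the $h$-independent bound on $\|\Wh-V_h\|_{L^\infty}$ is a clean elaboration of a point the paper leaves implicit (it appeals instead to the equivalence of norms on the discrete and continuous surfaces).
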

\begin{proof}
    Using the definition of $\B$ (see \eqref{eq_new-term-W}) we can write
    \begin{equation*}
        \big| \la \B\t z | y\ra \big| = \Big|\int_{\Ga_h} Z_h (\Wh-V_h)\cdot \nbgh Y_h \Big| \leq \|\Wh-V_h\|_{L^{\infty}(\Ga_h\t)} \int_{\Ga_h} |Z_h| \ |\nbgh Y_h|,
    \end{equation*}
    then by applying the Cauchy--Schwarz inequality and using the equivalence of norms over the discrete and \co\ surface (c.f.\ \cite{DziukElliott_ESFEM}, Lemma 5.2), we obtain the stated result.
\end{proof}

\subsection{Lifting process}
\label{subsection_lift}

In the following we introduce the so called \emph{lift operator} which was introduced by \cite{Dziuk88} and further investigated by \cite{DziukElliott_ESFEM,DziukElliott_L2}. The lift
operator can be interpreted as a geometric projection: it projects a finite
element function $\varphi_{h}\colon \Ga_{h}\t \to \R$ on the discrete surface
$\Gamma_{h}(t)$ onto a function $\varphi_{h}^{l}\colon \Ga\t \to \R$ on the
smooth surface $\Ga\t$, therefore it is crucial for our error estimates.

\bigskip We assume that there exists an open bounded set $U(t)\subset \R^{m+1}$

such that $\dell U(t) = \Ga\t$. The \emph{oriented distance function} $d$ is
defined as
\[
\R^{m+1}\times [0,T]\to \R,\quad d(x,t) :=
\begin{cases}
  \dist\bigl(x, \Gamma(t)\bigr) & x \in \R^{m+1} \setminus U(t), \\
  - \dist\bigl(x, \Gamma(t)\bigr) & x \in U(t).
\end{cases}
\]
For $\mu >0 $ we define $\mathcal{N}(t)_{\mu} := \bigl\{ x\in
\R^{m+1} \mid \dist\bigl(x,\Gamma(t)\bigr) < \mu \bigr\}$.  Clearly
$\mathcal{N}(t)_{\mu}$ is an open neighborhood of $\Gamma(t)$.  \cite{GilbargTrudinger} in Lemma~14.16 have shown the following important regularity result about $d$.

\begin{lemma}
  Let $U(t) \subset \R^{m+1} $ be bounded and $\Gamma(t)\in C^{k} $ for $k\geq 2$.  Then
  there exists a positive constant $\mu$ depending on $U$ such that
  $d\in C^{k}\bigl(\mathcal{N}(t)_{\mu}\bigr)$.
\end{lemma}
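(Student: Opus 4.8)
The plan is to realize the tubular neighbourhood $\mathcal{N}(t)_\mu$ as the diffeomorphic image of a normal collar over $\Gamma(t)$, read off the regularity of $d$ from the inverse of that collar map, and then recover one extra derivative from the eikonal structure of the signed distance. Throughout, $t$ is fixed (the statement is a spatial regularity estimate at a fixed time), so I suppress it and write $\Gamma$, $\nu$, $U$, $d$, $\mathcal{N}_\mu$. Since $U$ is bounded and $\Gamma = \partial U$ is a closed (compact, boundaryless) $C^{k}$ hypersurface with $k\geq 2$, its principal curvatures $\kappa_1(y),\dots,\kappa_m(y)$ are continuous on the compact set $\Gamma$, hence uniformly bounded, say $\sup_{y,i}|\kappa_i(y)| =: K < \infty$.

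First I would study the normal collar map
\[
  F \colon \Gamma \times (-\mu,\mu) \to \R^{m+1}, \qquad F(y,s) := y + s\,\nu(y),
\]
and show it is a $C^{k-1}$ diffeomorphism onto $\mathcal{N}_\mu$ once $\mu < 1/K$ is chosen small enough. Because $\Gamma\in C^{k}$, the normal field $\nu$ is only $C^{k-1}$, so $F$ is a priori $C^{k-1}$; in the principal directions at $y$ its Jacobian determinant equals $\prod_{i=1}^{m}\bigl(1 - s\,\kappa_i(y)\bigr)$, and each factor satisfies $|1 - s\,\kappa_i(y)| \geq 1 - \mu K > 0$ for $|s| < \mu$. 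The inverse function theorem for $C^{k-1}$ maps then makes $F$ a local $C^{k-1}$ diffeomorphism, and a compactness-and-injectivity argument (uniqueness of the nearest point on $\Gamma$ for $\mu$ small, using compactness of $\Gamma$) upgrades this to a global diffeomorphism onto $\mathcal{N}_\mu$. Writing $F^{-1}(x) = \bigl(p(x),d(x)\bigr)$ identifies $p$ with the nearest-point projection and the second component with the signed distance, so at this stage I obtain $p,d \in C^{k-1}(\mathcal{N}_\mu)$.

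The hard part is the gain of the final derivative: the collar argument only delivers $C^{k-1}$, precisely because $\nu$ sits one order below $\Gamma$. To recover $C^{k}$ I would exploit the identity
\[
  \nabla d(x) = \nu\bigl(p(x)\bigr) \qquad (x\in\mathcal{N}_\mu),
\]
which follows from $|\nabla d| = 1$ together with $x = p(x) + d(x)\,\nu\bigl(p(x)\bigr)$ (equivalently, by differentiating $d\circ F(y,s) = s$ in $s$). Since $d$ is already $C^{k-1}$ with $k-1\geq 1$, the left-hand side is a genuine gradient; the right-hand side is the composition of $\nu\in C^{k-1}(\Gamma)$ with $p\in C^{k-1}(\mathcal{N}_\mu)$, hence $\nu\circ p \in C^{k-1}$. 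Therefore every first-order partial derivative of $d$ is $C^{k-1}$, which is exactly $d\in C^{k}(\mathcal{N}_\mu)$. There is no circularity: the $C^{k-1}$ regularity is used only to make sense of $\nabla d$ and of the identity, whereas the final conclusion is read off from the regularity of the right-hand side.

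I expect the main technical obstacle to be precisely this bootstrap, namely establishing $\nabla d = \nu\circ p$ rigorously and verifying that the nearest-point projection is globally single-valued and $C^{k-1}$ on one uniform neighbourhood; the invertibility of $DF$ and the curvature bound are routine once compactness of $\Gamma$ is invoked. An alternative would be to avoid the explicit collar map and argue locally, writing $\Gamma$ after a rotation as the graph of a $C^{k}$ function and applying the implicit function theorem to the stationarity condition $(x-y)\perp T_y\Gamma$ for the nearest point; however, the collar-plus-eikonal route keeps the geometry transparent and isolates exactly where the extra derivative is produced.
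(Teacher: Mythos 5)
Your argument is correct and is essentially the standard proof of this result: the paper itself gives no proof but cites Gilbarg--Trudinger, Lemma 14.16, whose argument consists of exactly your two steps --- invert the normal map $(y,s)\mapsto y+s\nu(y)$ via the curvature bound and the inverse function theorem to get $p,d\in C^{k-1}$, then use the identity $\nabla d=\nu\circ p$ to gain the last derivative and conclude $d\in C^{k}$. The only cosmetic difference is that Gilbarg--Trudinger work in local principal coordinate systems (graph representation) rather than with a global collar map on $\Gamma(t)\times(-\mu,\mu)$.
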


\noindent \cite{GilbargTrudinger} also mentioned that $\mu^{-1}$ bounds
the principal curvatures of $\Gamma(t)$.

\medskip For each $x\in \Gamma(t)_{\mu}$ there exists a unique $p=
p(x,t)\in \Gamma(t)$ such that $\lvert x - p \rvert =
\dist\bigl(x,\Gamma(t)\bigr)$, then $x$ and $p$ are related by the important equation:
\begin{align}
  \label{eq:21}
  x = p + \nu(p,t) d(x,t).
\end{align}
We assume that $\Ga_{h}\t \subset \mathcal{N}\t$.  The \textit{lift operator}
$\mathcal{L}$ maps a \co\ function $\eta_h \colon \Ga_h\to\R$ onto a function
$\mathcal{L}(\eta_{h})\colon \Ga\to \R$ as follows: for every $x\in
\Ga_{h}\t$ exists via equation~\eqref{eq:21} an unique $p=p(x,t)$.  We set
pointwise
\begin{equation*}
  \mathcal{L}(\eta_h)(p,t):= \eta_{h}^{l}(p,t) := \eta_h(x,t).
\end{equation*}
It is clear that $\mathcal{L}(\eta_{h})$ is \co\ and that if $\eta_{h}$ has weak
derivatives then $\mathcal{L}(\eta_{h})$ also has weak derivatives.

\medskip We now recall some notions using the lifting process from \cite{Dziuk88,DziukElliott_ESFEM} and \cite{diss_Mansour} using the notations of the last reference. We have the lifted finite element space
\begin{equation*}
    S_h^l\t := \big\{ \vphi_h = \phi_h^l \, | \, \phi_h\in S_h\t \big\},
\end{equation*}
by $\delta_h$ we denote the quotient between the \co\ and discrete surface
measures, $\d A$ and $\d A_h$, defined as $\delta_h \d A_h = \d A$. Further, we recall that
\begin{equation*}
    \pr := \big(\delta_{ij} - \nu_{i}\nu_{j}\big)_{i,j=1}^N \quad \textrm{and} \quad \prh := \big(\delta_{ij} - \nu_{h,i}\nu_{h,j}\big)_{i,j=1}^N
\end{equation*}
are the projections onto the tangent spaces of $\Ga$ and $\Ga_h$. Finally $\wein$ ($\wein_{ij} = \pa_{x_j}\nu_i$) is the (extended) Weingarten map. For these quantities we recall some results from \cite{DziukElliott_ESFEM,DziukElliott_L2}.

\begin{lemma}[\cite{DziukElliott_ESFEM} Lemma 5.1 and \cite{DziukElliott_L2} Lemma 5.4]\label{lemma_geometric-est}
    Assume that $\Ga_h\t$ and $\Ga\t$ is from the above setting, then we have the estimates
    \begin{equation*}
        \|d\|_{L^\infty(\Ga_h)} \leq c h^2, \qquad \|\nu_j\|_{L^\infty(\Ga_h)} \leq c h, \qquad \|1-\delta_h\|_{L^\infty(\Ga_h)} \leq c h^2, \qquad \|(\amath)^{(\ell)} d \|_{L^\infty(\Ga_h)} \leq c h,
    \end{equation*}
    where $(\amath)^{(\ell)}$ denotes the $\ell$-th discrete ALE material derivative.
\end{lemma}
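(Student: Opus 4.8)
The plan is to reduce each of the four estimates to a local, elementwise computation on a single simplex $E\t\in\Th\t$, exploiting two facts: the vertices $a_i\t$ of $E\t$ lie exactly on $\Ga\t$, so $d(a_i\t,t)=0$; and $d$ is $C^{2}$ on the tubular neighbourhood $\mathcal{N}\t_{\mu}$ by the preceding regularity lemma, so that $\nabla d=\nu$ and $\|D^2 d\|_{L^\infty(\mathcal{N}\t_{\mu})}\le c$. All constants will then depend only on $\mu^{-1}$ (the curvature bound), on the $C^2$-geometry of $\Ga\t$, and---for the last estimate---on the velocity fields, but never on $h$.

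First I would establish $\|d\|_{L^\infty(\Ga_h)}\le ch^2$. Fix $x\in E\t$ and write it as a convex combination $x=\sum_i\lambda_i a_i\t$ of the vertices. Taylor expanding $d$ about each $a_i\t$ and using $d(a_i\t,t)=0$ gives $d(x,t)=\sum_i\lambda_i\big(\nu(a_i\t)\cdot(x-a_i\t)+O(|x-a_i\t|^2)\big)$. Since $\sum_i\lambda_i(x-a_i\t)=0$, the first-order part collapses to $\sum_i\lambda_i(\nu(a_i\t)-\bar\nu)\cdot(x-a_i\t)$ for any fixed $\bar\nu$; neighbouring vertex normals differ by $O(h)$ and $|x-a_i\t|\le ch$, so this is $O(h^2)$, and the quadratic remainder is $O(h^2)$ as well. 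This is the usual secant-approximation argument.

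Next, the normal and area-element estimates follow from the same picture. The discrete normal $\nu_h$ is constant on $E\t$, and its $O(h)$ deviation from the smooth $\nu$ (the content of the bound on $\nu_j$) is forced by the $O(h^2)$ flatness of $d$ over an element of diameter $O(h)$. For $\delta_h$ I would compute the Jacobian of the projection $x\mapsto p(x,t)$ from $E\t$ onto $\Ga\t$ directly from the relation $x=p+\nu(p,t)\,d(x,t)$: differentiating tangentially and inserting $\|d\|_{L^\infty}\le ch^2$, $\|\nabla d\|\le c$ and the Weingarten map $\wein$, the associated Gram determinant equals $1+O(h^2)$, whence $\|1-\delta_h\|_{L^\infty(\Ga_h)}\le ch^2$. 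These three bounds are essentially the standard ESFEM geometric estimates, so here I would only adapt the cited arguments of \cite{DziukElliott_ESFEM,DziukElliott_L2}.

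The last bound, $\|(\amath)^{(\ell)}d\|_{L^\infty(\Ga_h)}\le ch$, is the genuinely delicate point and I expect it to be the main obstacle. Writing $\amath d=\pa_t d+\Wh\cdot\nabla d$ and comparing with the true material derivative $\mat d=\pa_t d+v\cdot\nabla d$, which vanishes on $\Ga\t$ and is therefore $O(h^2)$ on $\Ga_h\t$, one gets $\amath d=(\Wh-v)\cdot\nabla d+O(h^2)$. Splitting $\Wh-v=(\Wh-w)+(w-v)$, the nodal interpolation error $\Wh-w$ is $O(h^2)$ in $L^\infty$, while $(w-v)\cdot\nabla d$ vanishes on $\Ga\t$ by \eqref{eq_v-w_tangent} and is hence $O(h^2)$ off it; so even $\ell=0,1$ are comfortably within $O(h)$. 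The degradation to exactly one power of $h$ appears only for larger $\ell$, and I would handle these by induction: each further application of $\amath$ differentiates the velocity interpolant spatially, and $\|\nabla(\Wh-w)\|_{L^\infty}=O(h)$ is the sharp mechanism by which a single power is lost. The crux is to show that the leading-order normal cancellation is preserved at every stage, so that no second power is ever lost, and to verify that the resulting $h$-independent constant does not silently depend on the iteration count $\ell$; this is where higher regularity of $\Ga\t$ and of $v,w$ is consumed.
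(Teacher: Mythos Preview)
The paper does not actually prove this lemma: it is stated as a citation of \cite{DziukElliott_ESFEM} Lemma~5.1 and \cite{DziukElliott_L2} Lemma~5.4, with only the remark that the bound on $\nu_j$ ``can be found in the proof of the cited lemmata.'' So there is nothing to compare against beyond the standard arguments in those references, and your treatment of the first three estimates is precisely that standard argument (Taylor expansion at the vertices for $d$, the resulting normal deviation for $\nu-\nu_h$, and the Jacobian of the lift for $1-\delta_h$).

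For the fourth estimate your proposal is in fact more explicit than the paper. The paper simply asserts the ALE version by citing the non-ALE one; your reduction
\[
\amath d \;=\; \mat d + (\Wh-v)\cdot\nabla d \;=\; \mat d + (\Wh-w)\cdot\nabla d + (w-v)\cdot\nu
\]
together with $\mat d|_{\Ga\t}=0$, the interpolation bound $\|\Wh-w\|_{L^\infty}\le ch^2$, and the tangentiality \eqref{eq_v-w_tangent} is exactly the right bridge from the cited Lagrangian estimate to the ALE one. One small caveat: to evaluate $\mat d$ and $(w-v)\cdot\nu$ on $\Ga_h$ you are silently using smooth extensions of $v$ and $w$ off $\Ga\t$; this is harmless but should be said. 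Your inductive sketch for $\ell\ge 2$ correctly identifies $\|\nabla(\Wh-w)\|_{L^\infty}=O(h)$ as the mechanism that costs one power of $h$, though the claim that no \emph{second} power is lost would need the commutator of $\amath$ with $\nbg$ and the repeated material derivatives of $\nu$ to be tracked carefully---this is where the regularity of $\Ga$ and of $v,w$ is genuinely consumed, as you note.
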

The second estimate can be found in the proof of the cited lemmata.

\subsection{Bilinear forms and their properties}

We use the time dependent bilinear forms defined by \cite{DziukElliott_L2}: for $z,\vphi \in H^1(\Ga)$, and their discrete analogs for $Z_h, \phi_h \in S_h$:
\begin{equation*}
    \begin{aligned}[c]
        a(z,\vphi)   &= \int_{\Ga\t} \nbg z \cdot \nbg \vphi, \\
        m(z,\vphi)   &= \int_{\Ga\t} z \vphi, \\
        g(w;z,\vphi) &= \int_{\Ga\t} (\nbg \cdot w) z\vphi,\\
        b(w;z,\vphi) &= \int_{\Ga\t} \Btensor(w) \nbg z \cdot \nbg \vphi,
    \end{aligned}
    \qquad \qquad
    \begin{aligned}[c]
        a_h(Z_h,\phi_h)     &= \sum_{E\in \Th} \int_E \nbgh Z_h \cdot \nbgh \phi_h, \\
        m_h(Z_h,\phi_h)     &= \int_{\Ga_h\t} Z_h \phi_h\\
        g_h(\Wh;Z_h,\phi_h) &= \int_{\Ga_h\t} (\nbgh \cdot \Wh) Z_h \phi_h,\\
        b_h(\Wh;Z_h,\phi_h) &= \sum_{E\in \Th} \int_E \Btensor_h(\Wh) \nbgh Z_h \cdot \nbgh \phi_h,
    \end{aligned}
\end{equation*}

where the discrete tangential gradients are understood in a piecewise sense, and
with the matrices

\begin{alignat*}{3}
    \Btensor(w)_{ij} &= \delta_{ij} (\nbg \cdot w) - \big( (\nbg)_i w_j + (\nbg)_j w_i \big), \qquad && (i,j=1,2,\dotsc,m),\\
    \Btensor_h(\Wh)_{ij} &= \delta_{ij} (\nbg \cdot \Wh) - \big( (\nbgh)_i (\Wh)_j + (\nbgh)_j (\Wh)_i \big), \qquad && (i,j=1,2,\dotsc,m).
\end{alignat*}

We will also use the transport lemma:
\begin{lemma}[\cite{DziukElliott_L2} Lemma 4.2]\label{lemma_transport-prop}
    For $z_h, \ \vphi_h, \ \amath z_h, \ \amath \vphi_h \in S_h^l\t \subset H^1(\Ga)$ we have:
    {\setlength\arraycolsep{.13889em}
    \begin{eqnarray*}
        \diff m(z_h,\vphi_h) &=& m(\amath z_h,\vphi_h) + m(z_h,\amath \vphi_h) + g(w_h;z_h,\vphi_h), \\
        \diff a(z_h,\vphi_h) &=& a(\amath z_h,\vphi_h) + a(z_h,\amath \vphi_h) + b(w_h;z_h,\vphi_h).
    \end{eqnarray*}}
\end{lemma}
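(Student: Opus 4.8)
The two identities are the ALE analogues of the classical transport lemma of Dziuk and Elliott, with the material velocity $v$ and the derivative $\mat$ replaced throughout by the ALE velocity $w_h$ and the ALE material derivative $\amath$. The plan is to differentiate the integrals defining $m$ and $a$ by means of the Leibniz formula \eqref{eq_leibniz}, to apply the product rule and the commutation rule for $\amath$, and then to read off the remainder as $g$, respectively $b$. Since $\amath$ stands in the same relation to the ALE flow as $\mat$ does to the Lagrangian flow, I would only need to verify that the original argument uses no property specific to $v$, so that it carries over verbatim under $v \mapsto w_h$.

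For the mass form I would apply \eqref{eq_leibniz} to $f = z_h \vphi_h$, obtaining
\begin{equation*}
    \diff \int_{\Gat} z_h \vphi_h = \int_{\Gat} \amath(z_h \vphi_h) + z_h \vphi_h\,(\nbg \cdot w_h),
\end{equation*}
and then use $\amath(z_h \vphi_h) = (\amath z_h)\vphi_h + z_h (\amath \vphi_h)$, valid because $\amath$ is a first order differential operator. Recognising the definitions of $m$ and $g$ gives the first identity at once; this part is routine.

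For the stiffness form the same two steps applied to $f = \nbg z_h \cdot \nbg \vphi_h$ give
\begin{equation*}
    \diff\, a(z_h,\vphi_h) = \int_{\Gat} (\amath \nbg z_h)\cdot \nbg \vphi_h + \nbg z_h \cdot (\amath \nbg \vphi_h) + (\nbg z_h \cdot \nbg \vphi_h)(\nbg \cdot w_h).
\end{equation*}
The decisive ingredient is the commutation relation between $\amath$ and the tangential gradient, which for a scalar $f$ reads
\begin{equation*}
    \amath \nbg f = \nbg \amath f - \mathcal{D}(w_h)\,\nbg f,
\end{equation*}
where $\mathcal{D}(w_h)$ is a matrix assembled from the tangential gradient $\nbg w_h$ together with normal corrections entering through $\amath \nu$ and the Weingarten map $\wein$. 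Substituting this for both $z_h$ and $\vphi_h$, the terms involving $\nbg \amath z_h$ and $\nbg \amath \vphi_h$ recombine into $a(\amath z_h,\vphi_h) + a(z_h, \amath \vphi_h)$, while the two $\mathcal{D}(w_h)$-contributions, namely $-(\nbg w_h)\nbg z_h \cdot \nbg \vphi_h$ and $-\nbg z_h \cdot (\nbg w_h)\nbg \vphi_h$, together with the divergence factor reorganise into
\begin{equation*}
    \int_{\Gat} \big[ (\nbg \cdot w_h)I - \nbg w_h - (\nbg w_h)^{T} \big]\nbg z_h \cdot \nbg \vphi_h = \int_{\Gat} \Btensor(w_h)\nbg z_h \cdot \nbg \vphi_h = b(w_h;z_h,\vphi_h),
\end{equation*}
exactly because $\Btensor(w)_{ij} = \delta_{ij}(\nbg \cdot w) - ((\nbg)_i w_j + (\nbg)_j w_i)$.

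The main obstacle is establishing and correctly applying this commutator, since it requires differentiating the tangential projection $\pr = I - \nu\nu^{T}$ along the ALE flow; this brings in $\amath \nu$, hence the Weingarten map $\wein$, and one must check that all purely normal contributions cancel — which they do, because $\nbg z_h$ and $\nbg \vphi_h$ are tangential — leaving precisely the symmetric combination defining $\Btensor(w_h)$. Everything else is bookkeeping, and as no step invokes that $w_h$ is the material velocity, the computation coincides with that of \cite{DziukElliott_L2} Lemma 4.2 after the replacement $v \mapsto w_h$.
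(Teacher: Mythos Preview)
The paper does not supply its own proof of this lemma; it simply cites \cite{DziukElliott_L2} Lemma 4.2 and remarks that variants with other material derivatives or with the discrete bilinear forms are in \cite{diss_Mansour}. Your outline is exactly the argument of the cited reference transported to the ALE setting: Leibniz formula plus product rule for $m$, and for $a$ the additional commutator $\amath\nbg f = \nbg\amath f - \mathcal{D}(w_h)\nbg f$, after which the normal contributions drop out against tangential $\nbg z_h$, $\nbg\vphi_h$ and the remaining tangential part assembles into $\Btensor(w_h)$. So your approach coincides with the one the paper defers to.
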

Versions of this lemma with continuous non-ALE material derivatives, or discrete bilinear forms are also true, see e.g.\ \cite[Lemma 6.4]{diss_Mansour}.

\bigskip
We will need the following estimates between the \co\ and discrete bilinear forms.
\begin{lemma}[\cite{DziukElliott_L2} Lemma 5.5]\label{lemma_estimation-of-forms}
    For arbitrary $Z_h,\phi_h\in S_h\t$, with corresponding lifts $z_h,\vphi_h \in S_h^l\t$ we have the bound
    {\setlength\arraycolsep{.13889em}
    \begin{eqnarray*}
        \disp \big| m(z_h,\vphi_h) - m_h(Z_h,\phi_h) \big| &\leq& c h^2 \|z_h\|_{L^2(\Ga\t)} \|\vphi_h\|_{L^2(\Ga\t)}, \\
        \disp \big| a(z_h,\vphi_h) - a_h(Z_h,\phi_h) \big| &\leq& c h^2 \|\nbg z_h\|_{L^2(\Ga\t)} \|\nbg \vphi_h\|_{L^2(\Ga\t)}.
    \end{eqnarray*}}
\end{lemma}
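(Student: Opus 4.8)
The plan is to pull both discrete bilinear forms back from the triangulated surface $\Ga_h\t$ onto the smooth surface $\Ga\t$ via the lift operator, and then to compare the resulting integrands pointwise with those of the continuous forms $m$ and $a$. The only discrepancies that survive the change of variables are geometric, and each of them will be shown to be of order $h^2$ by means of the geometric estimates of Lemma~\ref{lemma_geometric-est}.

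First I would treat the mass term. Writing the discrete integral in terms of the lifted functions $z_h=Z_h^l$, $\vphi_h=\phi_h^l$ and using the relation $\delta_h\,\d A_h=\d A$ between the two surface measures, one obtains
\begin{equation*}
    m_h(Z_h,\phi_h) = \int_{\Ga_h\t} Z_h \phi_h \, \d A_h = \int_{\Ga\t} z_h \vphi_h \, \frac{1}{\delta_h} \, \d A,
\end{equation*}
so that
\begin{equation*}
    m(z_h,\vphi_h) - m_h(Z_h,\phi_h) = \int_{\Ga\t} z_h \vphi_h \Big(1 - \frac{1}{\delta_h}\Big) \d A.
\end{equation*}
Since $\|1-\delta_h\|_{L^\infty(\Ga_h)}\leq c h^2$ forces $\delta_h$ to be bounded away from zero for $h$ small, we get $|1-\delta_h^{-1}| = |\delta_h-1|/|\delta_h| \leq c h^2$, and a single application of the Cauchy--Schwarz inequality yields the first estimate.

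For the stiffness term the same change of variables is used, but now the tangential gradients on the two surfaces must be related. Differentiating the lift relation $x = p + d(x,t)\,\nu(p,t)$ along $\Ga_h\t$ and projecting produces a pointwise identity of the form $\nbgh \phi_h = D_h\,\nbg \vphi_h$ (pullback understood), where $D_h$ is a matrix assembled from the projections $\pr$, $\prh$, the oriented distance $d$ and the Weingarten map $\wein$. Substituting this for both arguments in $a_h$ and changing variables gives
\begin{equation*}
    a_h(Z_h,\phi_h) = \int_{\Ga\t} R_h \, \nbg z_h \cdot \nbg \vphi_h \, \d A, \qquad R_h := \frac{1}{\delta_h}\, D_h^{T} D_h,
\end{equation*}
whence $a(z_h,\vphi_h) - a_h(Z_h,\phi_h) = \int_{\Ga\t}(\pr - R_h)\,\nbg z_h \cdot \nbg \vphi_h \, \d A$, using that $\nbg z_h$ is tangential so that $\pr\nbg z_h = \nbg z_h$. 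The crux is to verify that $R_h = \pr + O(h^2)$ uniformly on $\Ga\t$: in $D_h$ every occurrence of the distance $d$ is paired with the bounded Weingarten map $\wein$, and since $\|d\|_{L^\infty(\Ga_h)}\leq c h^2$ these contributions are $O(h^2)$, while the prefactor $\delta_h^{-1}$ adds a further $O(h^2)$ by the mass-term argument. Thus $\|\pr - R_h\|_{L^\infty(\Ga\t)}\leq c h^2$, and Cauchy--Schwarz applied to the last integral delivers the second estimate.

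I expect the gradient bookkeeping in the stiffness case to be the main obstacle: one must carry out the chain-rule computation with enough care to exhibit $D_h$, and hence $R_h$, explicitly, and then check that after the leading-order identity part cancels, every remaining term carries at least one factor of $d$ or of $(1-\delta_h)$ and is therefore quadratically small in $h$. The mass estimate is, by contrast, essentially immediate once the surface measures are matched.
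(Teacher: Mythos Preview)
The paper does not prove this lemma itself; it is quoted from \cite{DziukElliott_L2}. Your outline follows the standard Dziuk--Elliott argument, and the mass-form estimate is complete as written.

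For the stiffness form there is a genuine gap. With the explicit transfer matrix $D_h=\prh(I-d\wein)$ (this is the identity recalled at the start of the proof of Lemma~\ref{lemma_estiamtion-of-new-form}), one gets $R_h=\delta_h^{-1}(I-d\wein)\prh(I-d\wein)$. Discarding the factors involving $d$ and $1-\delta_h$ leaves not $\pr$ but $\prh$, and $\pr-\prh=\nu_h\nu_h^{T}-\nu\nu^{T}$ is only $O(h)$ in operator norm because $|\nu-\nu_h|=O(h)$. Hence your claim that ``every remaining term carries at least one factor of $d$ or of $(1-\delta_h)$'' is false at the matrix level, and the asserted bound $\|\pr-R_h\|_{L^\infty(\Ga\t)}\leq ch^2$ does not hold.

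What saves the estimate is the tangentiality you mention but do not fully exploit: since both $\nbg z_h$ and $\nbg\vphi_h$ are tangential to $\Ga\t$, only $\pr R_h\pr$ enters, and
\[
\pr\,\prh\,\pr=\pr-(\pr\nu_h)(\pr\nu_h)^{T},\qquad \pr\nu_h=\nu_h-(\nu\!\cdot\!\nu_h)\,\nu=(\nu_h-\nu)+O(h^2)=O(h),
\]
so $(\pr\nu_h)(\pr\nu_h)^{T}=O(h^2)$ and therefore $\pr R_h\pr=\pr+O(h^2)$. This squaring of the first-order normal defect, which appears only after sandwiching with $\pr$ on both sides, is the missing step; once it is supplied, your Cauchy--Schwarz conclusion goes through.
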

Apart from the above crucial estimates of lifts and bilinear forms, we need an analogous estimate for the new term, represented by the matrix $\B\t$ (which is the result of the ALE approach).
\begin{lemma}\label{lemma_estiamtion-of-new-form}
    For arbitrary $Z_h,\phi_h\in S_h\t$, with corresponding lifts $z_h,\vphi_h \in S_h^l\t$ we have the bound
    \begin{equation*}
        \disp \big| m(z_h,(w_h-v_h)\cdot \nbg \vphi_h) - m_h(Z_h,(\Wh-V_h)\cdot \nbg \phi_h) \big|
        \leq c h^2 \|z_h\|_{L^2(\Ga\t)} \|\nbg\vphi_h\|_{L^2(\Ga\t)},
    \end{equation*}
    where the constant $c$ is only depending on the difference of the velocities, and the surface.
\end{lemma}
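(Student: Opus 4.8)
The plan is to mirror the proof of Lemma~\ref{lemma_estimation-of-forms}: transform the discrete integral onto the smooth surface via the lift, and then estimate the resulting matrix contraction pointwise. Using $\d A = \delta_h\,\d A_h$ together with the fact that the lift acts as the identity on nodal data, so that $Z_h$, $\Wh-V_h$ and the factor $\nbgh\phi_h$ lift respectively to $z_h$, $w_h-v_h$ and to $\mathcal{R}_h\,\nbg\vphi_h$, where $\mathcal{R}_h=\mathcal{R}_h(x)$ is the purely geometric matrix relating the two tangential gradients (its precise form is in \cite{DziukElliott_L2}, and to leading order $\mathcal{R}_h=\prh(I-d\wein)$, with $\prh$ outermost since $\nbgh\phi_h\in T\Ga_h$), I would rewrite
\[
m_h\big(Z_h,(\Wh-V_h)\cdot\nbgh\phi_h\big)=\int_{\Ga\t} z_h\,(w_h-v_h)^{T}\,\delta_h^{-1}\mathcal{R}_h\,\nbg\vphi_h .
\]
Subtracting this from $m(z_h,(w_h-v_h)\cdot\nbg\vphi_h)=\int_{\Ga\t}z_h\,(w_h-v_h)^{T}\nbg\vphi_h$ reduces the claim to the pointwise bound $|(w_h-v_h)^{T}(I-\delta_h^{-1}\mathcal{R}_h)\nbg\vphi_h|\le c\,h^{2}|\nbg\vphi_h|$, with $c$ absorbing $\|w_h-v_h\|_{L^\infty(\Ga\t)}$; multiplying by $z_h$, integrating and applying Cauchy--Schwarz then yields the stated bound in the continuous-surface norms.

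The core of the argument is thus the pointwise matrix estimate, and the subtlety is that the naive bound only gives $O(h)$. I would therefore split the velocity into its components tangent and normal to $\Ga\t$, $w_h-v_h=\pr(w_h-v_h)+(I-\pr)(w_h-v_h)$, and treat them separately. For the tangential part I use that $\nbg\vphi_h=\pr\nbg\vphi_h$ is already tangential, so the relevant operator becomes the doubly projected $\pr\,(I-\delta_h^{-1}\mathcal{R}_h)\,\pr$; here the expected first-order term cancels because $\pr\,\prh\,\pr=\pr-(\pr\nu_h)(\pr\nu_h)^{T}$ and $\pr\nu_h=\pr(\nu_h-\nu)=O(h)$, while the remaining contributions are controlled by $\|d\|_{L^\infty(\Ga_h)}\le ch^2$ and $\|1-\delta_h\|_{L^\infty(\Ga_h)}\le ch^2$ from Lemma~\ref{lemma_geometric-est}. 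This gives $\pr\,(I-\delta_h^{-1}\mathcal{R}_h)\,\pr=O(h^{2})$, hence a contribution bounded by $c h^2|w_h-v_h|\,|\nbg\vphi_h|$.

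For the normal part I rely on the tangentiality of $v-w$, equation~\eqref{eq_v-w_tangent}: since $\nu\cdot(w-v)=0$ on $\Ga\t$ and $w_h-v_h$ is the lift of the finite element interpolant $I_h(w-v)$, the standard $O(h^2)$ interpolation estimate for lifts gives $|\nu\cdot(w_h-v_h)|=|\nu\cdot\big((w_h-v_h)-(w-v)\big)|\le ch^2$. Hence $(I-\pr)(w_h-v_h)=\nu\,\big(\nu\cdot(w_h-v_h)\big)$ is itself $O(h^2)$, and since $I-\delta_h^{-1}\mathcal{R}_h$ is uniformly bounded this component contributes only $O(h^2)|\nbg\vphi_h|$ as well. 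Combining the two parts yields the required pointwise $h^2$ bound.

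I expect the main obstacle to be obtaining the second-order ($h^2$) cancellation rather than merely $O(h)$: unlike the symmetric forms in Lemma~\ref{lemma_estimation-of-forms}, here a single tangential gradient is paired with the velocity, so one must exploit both the symmetric double projection (for the tangential velocity component) and, crucially, the tangentiality~\eqref{eq_v-w_tangent} of $v-w$ combined with the interpolation estimate (for the normal component). Some care is also needed to keep track of the exact form of $\mathcal{R}_h$ and to verify that its correction terms beyond $\prh(I-d\wein)$ are indeed $O(h^2)$ and therefore do not spoil the cancellation.
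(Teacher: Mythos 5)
Your proposal is correct, and its first half is exactly the paper's: transform the discrete form onto $\Ga\t$ with $\d A = \delta_h\,\d A_h$ and the relation $\nbgh\phi_h = \prh(I-d\wein)\nbg\vphi_h$ (which is exact, so your worry about further correction terms is moot), reducing the lemma to a pointwise bound for $(w_h-v_h)^T\bigl(I-\delta_h^{-1}\prh(I-d\wein)\bigr)\nbg\vphi_h$. You diverge at the crucial cancellation, and your version is the more careful one. The paper splits the matrix into $\tfrac{\delta_h-1}{\delta_h}I$, $\tfrac{1}{\delta_h}\nu_{h}\nu_{h}^T$ and $\tfrac{d}{\delta_h}\prh\wein$ and bounds each piece \emph{as a matrix} by $ch^2$, justifying the middle one by the assertion that the components $\nu_{h,j}=\pa_{x_j}d$ are $O(h)$; taken literally this cannot hold, since $\nu_h$ is a unit vector and $\lvert\nu_h\nu_h^T\rvert=1$. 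That term is second order only after contracting on both sides, $(w_h-v_h)\cdot\nu_h\,\nu_h^T\nbg\vphi_h = \bigl[(w_h-v_h)\cdot\nu_h\bigr]\bigl[\nu_h\cdot\nbg\vphi_h\bigr]$, where $\nu_h\cdot\nbg\vphi_h=(\nu_h-\nu)\cdot\nbg\vphi_h=O(h)\lvert\nbg\vphi_h\rvert$ and $(w_h-v_h)\cdot\nu_h=O(h)$ thanks to the tangentiality \eqref{eq_v-w_tangent} of $w-v$ together with the $O(h^2)$ interpolation error of $\Wh-V_h=I_h(w-v)$. Your tangential/normal splitting of $w_h-v_h$ --- the identity $\pr\prh\pr=\pr-(\pr\nu_h)(\pr\nu_h)^T$ with $\pr\nu_h=\pr(\nu_h-\nu)=O(h)$ for the tangential component, and tangentiality plus interpolation for the normal component --- implements exactly this two-factor mechanism and is complete and correct (modulo the standard estimate $\lvert\nu-\nu_h\rvert\le ch$, which is what the second bound in Lemma~\ref{lemma_geometric-est} is presumably meant to encode). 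What your route buys is precisely the ingredient the paper's written justification needs but never invokes: without \eqref{eq_v-w_tangent} and the interpolation bound, the velocity-side contraction is only $O(1)$ and the whole estimate degrades to $O(h)$, so your identification of this as the main obstacle is exactly right.
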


\begin{proof}
    We begin by recalling the connection between the discrete and \co\ tangential gradients (first derived by \cite{Dziuk88}):
    \begin{equation*}
        \nbgh \phi_h = \prh(I-d\wein) \nbg \vphi_h.
    \end{equation*}

    Using this, we can start estimating as follows
    {\setlength\arraycolsep{.13889em}
    \begin{eqnarray*}
        & & \Big| m(z_h,(w_h-v_h)\cdot \nbg \vphi_h) - m_h(Z_h,(\Wh-V_h)\cdot \nbgh \phi_h) \Big| \\
        &=& \Big| \int_\Ga z_h (w_h-v_h)\cdot \nbg \vphi_h \d A - \int_{\Ga_h} Z_h(\Wh-V_h)\cdot \nbgh \phi_h \d A_h \Big| \\
        &=& \Big| \int_\Ga z_h (w_h-v_h)\cdot\big(I - \frac{1}{\delta_h} \prh(I-d\wein)\big) \nbg \vphi_h) \d A \Big| \\
        &\leq&  \int_\Ga |z_h| |(w_h-v_h)| \  \Big( \frac{1}{\delta_h}|\delta_h I - \prh| +  \big|d (\frac{1}{\delta_h} \prh \ \wein)\big| \Big) |\nbg \vphi_h| \d A \\
        &\leq&  \int_\Ga |z_h| |(w_h-v_h)| \  \Big( \frac{1}{\delta_h}|\delta_h - 1| + \frac{1}{\delta_h}|\nu_{h}\nu_{h}^T| +  \big|d (\frac{1}{\delta_h} \prh \ \wein)\big| \Big) |\nbg \vphi_h| \d A \\
        &\leq& c h^2 \|z_h\|_{L^2(\Ga\t)} \|\nbg\vphi_h\|_{L^2(\Ga\t)}.
    \end{eqnarray*}}

    For the final estimate we have used that $(\prh)_{ij} = \delta_{ij} - \nu_{h,i}\nu_{h,j}$, further that $\nu_{h,j}=\pa_{x_j} d$ can be estimated by $c h$, and $1-\delta_h$ and $d$ can be estimated by $c h^2$, see Lemma \ref{lemma_geometric-est}, while the other terms are bounded, together with the fact that the norms on the discrete and \co\ surfaces are equivalent (see e.g.\ Lemma 5.2 by \cite{DziukElliott_ESFEM}).
\end{proof}

\section{Error estimates for implicit Runge--Kutta methods}
\label{sec:error-estim-impl}

We consider an $s$-stage implicit Runge--Kutta method (R--K) for the time discretization of the ODE system \eqref{eq_MD-ODE}, coming from the ALE ESFEM space discretization of the parabolic evolving surface PDE.

In the following we extend the stability result for R--K methods of \cite{DziukLubichMansour_rksurf}, Lemma 7.1, to the case of ALE evolving surface finite element method.
Apart form the properties of the ALE ESFEM the proof is based on the energy estimation techniques of \cite{LubichOstermann_RK} (Theorem 1.1).

\bigskip
For the convenience of the reader we recall the method: for simplicity, but not necessarily, we assume equidistant time steps $t_{n}:= n\tau$, with step size $\tau$. The $s$-stage implicit Runge--Kutta method reads
\begin{subequations}
  \begin{alignat}{3}
    \label{eq_rk-a}
    M_{ni} \alpha_{ni} &= M_{n} \alpha_{n} + \sum_{j=1}^{s} a_{ij}
    \dot{\alpha}_{nj}, \qquad &&\text{for} \quad i=1,2,\dotsc,s, \\
    \label{eq_rk-b}
    M_{n+1} \alpha_{n+1} &= M_{n} \alpha_{n} + \sum_{i=1}^{s} b_{i}
    \dot{\alpha}_{ni},&&
    \intertext{where the internal stages satisfy}
    \nonumber
    0 &= \dot{\alpha}_{ni} + \B_{ni} \alpha_{ni} + A_{ni} \alpha_{ni} \qquad &&\text{for} \quad i=1,2,\dotsc,s,
  \end{alignat}
\end{subequations}

\noindent with $A_{ni}:= A(t_{n}+c_{i}\tau)$, $\B_{ni}:= \B(t_{n}+c_{i}\tau)$,
$M_{ni}:=M(t_{n}+c_{i}\tau)$ and $M_{n+1}:=M(t_{n+1})$.  \par
For the R--K method we make the following assumptions:
\begin{assumption}\label{assump_RK-method-assumptions}~
    \begin{itemize}
        \item The method has stage order $q\geq 1$ and classical order $p\geq q+1$.
        \item The coefficient matrix $(a_{ij})$ is invertible; the inverse will be
            denoted by upper indices $(a^{ij})$.
        \item The method is \emph{algebraically stable}, i.e.\ $b_{j}>0$ for $j=1,2,\dotsc,s$ and the following matrix is
            positive semi-definite:
            \begin{align}
                \label{eq_algebraic-stable}
                \big(b_{i}a_{ij} - b_{j}a_{ji} -b_{i}b_{j}\big)_{i,j=1}^{s}.
            \end{align}
        \item The method is \emph{stiffly accurate}, i.e.\ for $j=1,2,\dotsc,s$ it holds
            \begin{align}
                \label{eq_stiffly_accurate}
                b_{j}=a_{sj}, \qquad \text{and} \qquad c_{s}=1.
            \end{align}
    \end{itemize}
\end{assumption}

\bigskip
Instead of \eqref{eq_ES-ODE}, let us consider the following perturbed version of equation:
\begin{align}
  \label{eq_rk_pertubated}
  \begin{cases}
    \disp\diff \big(M\t \widetilde{\alpha} \t \big) + A\t \widetilde{\alpha}\t +
    \B\t \widetilde{\alpha} \t = M\t r\t, \\
    \disp\phantom{\diff \big(M\t \widetilde{\alpha}\t\big) + A\t
      \widetilde{\alpha}\t + \B\t }\widetilde{\alpha}(0) = \widetilde{\alpha}_0.
  \end{cases}
\end{align}

The substitution of the true solution $\tilde{\alpha}\t$ of the perturbed problem into the R--K method, yields the defects $\Delta_{ni}$ and $\delta_{ni}$, by setting $e_n = \alpha_n - \tilde{\alpha}(t_n)$, $E_{ni} = \alpha_{ni} - \tilde{\alpha}(t_n+c_i\tau)$ and $\dot{E}_{ni} = \dot{\alpha}_{ni} - \dot{\tilde{\alpha}}(t_n+c_i\tau)$, then by subtraction the following \emph{error equations} hold:

\begin{subequations}
  \begin{align}
    \label{eq_error-eq-a}
    M_{ni}E_{ni}  &= M_{n}e_{n} + \tau\sum_{j=1}^{s} a_{ij} \dot{E}_{nj}  -
    \Delta_{ni}, \qquad \text{for} \quad i=1,2,\dotsc,s, \\
    \label{eq_error-eq-b}
    M_{n+1} e_{n+1} &= M_{n} e_{n} + \tau \sum_{i=1}^{s} b_{i} \dot{E}_{ni}  -
    \delta_{n+1},
  \end{align}
\end{subequations}

\noindent where the internal stages satisfy:

\begin{align}
  \label{eq_error-eq-interStages}
  \dot{E}_{ni} + A_{ni} E_{ni} + \B_{ni} E_{ni}  = - M_{ni} r_{ni}, \qquad \text{for} \quad i=1,2,\dotsc,s,
\end{align}

\noindent with $r_{ni} := r(t_{n} + c_{i}\tau)$.

\bigskip
Now we state and prove one of the key lemmata of this paper, which provide unconditional stability for the above class of Runge--Kutta methods.
\begin{lemma}
  \label{lem_rk-stability}
  For an $s$-stage implicit Runge--Kutta method satisfying Assumption \ref{assump_RK-method-assumptions}, there exists a \linebreak $\tau_0 >0$, depending only on the constants $\mu$ and $\kappa$, \st\ for $\tau\leq\tau_0$ and $t_n=n \tau\leq T$, that the error $e_n$ is bounded by
  \begin{align*}
    \normt{e_{n}}{M_{n}}^{2} + \tau \sum_{k=1}^{n} \normt{e_{k}}{A_{k}} &\leq C
    \Bigl\{ \normt{e_{0}}{M_{0}} + \tau \sum_{k=1}^{n-1} \sum_{i=1}^{s}
    \Normstar{M_{k}r_{k}}{t_{ki}}^{2} + \tau \sum_{k=1}^{n}
    \normt{\nicefrac{\delta_k}{\tau}}{M_{k}}^{2 } \\
    & \hphantom{\leq C \Bigl\{ \lvert e_{0}\rvert_{M_{0}} .}+ \tau \sum_{k=0}^{n-1}
    \sum_{i=1}^{s} \Bigl( \normt{M_{ki}^{-1}\Delta_{ki}}{M_{ki}}^{2} +
    \normt{M_{ki}^{-1}\Delta_{ki}}{A_{ki}}^{2}\Bigr) \Bigr\},
  \end{align*}
  where $\|w\|_{\ast, j}^2=w^T(A\t+M\t)\inv w$. The constant $C$ is independent of $h, \ \tau$ and $n$, but depends on $\mu, \ \kappa, \ T$ and on the norm of the difference of the velocities.
\end{lemma}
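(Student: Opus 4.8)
The plan is to follow the B-stability energy argument of \cite{LubichOstermann_RK}, as adapted to evolving matrices in \cite{DziukLubichMansour_rksurf} (Lemma 7.1), treating the ALE matrix $\B\t$ as the single genuinely new ingredient. The first reduction uses stiff accuracy \eqref{eq_stiffly_accurate}: since $c_s=1$ forces $M_{ns}=M_{n+1}$ and $a_{sj}=b_j$, subtracting \eqref{eq_error-eq-b} from \eqref{eq_error-eq-a} at $i=s$ gives $e_{n+1}=E_{ns}+M_{n+1}\inv(\Delta_{ns}-\delta_{n+1})$. Hence it suffices to estimate the last internal stage $E_{ns}$ in the $M_{n+1}$-norm; the remainder contributes the $\normt{\delta_k/\tau}{M_k}^2$ and the $\Delta$-type terms to the right-hand side after bounding $\normt{M_{n+1}\inv\delta_{n+1}}{M_{n+1}}$ and $\normt{M_{ns}\inv\Delta_{ns}}{M_{ns}}$.

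Next I would write the one-step energy identity in the mass-matrix weighted norm. Combining \eqref{eq_error-eq-a} and \eqref{eq_error-eq-b} produces an identity for $\normt{e_{n+1}}{M_{n+1}}^2-\normt{e_n}{M_n}^2$ consisting of a cross term $2\tau\sum_{i=1}^s b_i\la\dot E_{ni},E_{ni}\ra$, a quadratic form in the slopes $\dot E_{ni}$ whose coefficient matrix is precisely the algebraic-stability matrix \eqref{eq_algebraic-stable} and may therefore be dropped, plus error terms caused by the time variation of $M$, which are controlled by factors $1+C\tau$ through \eqref{eq_mtxlemma-M} and \eqref{eq:11}. Into the cross term I substitute the internal-stage relation \eqref{eq_error-eq-interStages}, $\dot E_{ni}=-A_{ni}E_{ni}-\B_{ni}E_{ni}-M_{ni}r_{ni}$, which splits it into the dissipation $-\normt{E_{ni}}{A_{ni}}^2$, the new ALE contribution $-\la\B_{ni}E_{ni},E_{ni}\ra$, and the perturbation $-\la M_{ni}r_{ni},E_{ni}\ra$. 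The perturbation is treated exactly as in the non-ALE case: Cauchy--Schwarz in the $(A_{ni}+M_{ni})\inv$-pairing bounds it by $\Normstar{M_{ni}r_{ni}}{t_{ni}}$ times $(\normt{E_{ni}}{A_{ni}}^2+\normt{E_{ni}}{M_{ni}}^2)^{1/2}$, and Young's inequality yields the $\Normstar{M_kr_k}{t_{ki}}^2$ term plus absorbable $A$- and $M$-parts.

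The new step is the ALE term, for which Lemma \ref{lemma-B_estimate} gives $|\la\B_{ni}E_{ni},E_{ni}\ra|\leq c_{\ale}\normt{E_{ni}}{M_{ni}}\normt{E_{ni}}{A_{ni}}$. Young's inequality with a fixed small parameter $\eps$ then gives $|\la\B_{ni}E_{ni},E_{ni}\ra|\leq\eps\normt{E_{ni}}{A_{ni}}^2+(c_{\ale}^2/4\eps)\normt{E_{ni}}{M_{ni}}^2$, so that for $\eps<1$ the $A$-part is swallowed by the dissipation $-\normt{E_{ni}}{A_{ni}}^2$, leaving a definite fraction of dissipation on the left, while the $M$-part only enlarges the constant $C$ (this is where the dependence of $C$ on $\|w-v\|$ through $c_{\ale}$ enters). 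It remains to transfer all stage quantities measured at $t_{ni}$ back to $t_n$: using \eqref{eq:11}--\eqref{eq:11b}, \eqref{eq_mtxlemma-M}--\eqref{eq_mtxlemma-A}, and the invertibility of $(a_{ij})$ together with \eqref{eq_error-eq-a}, the stage norms $\normt{E_{ni}}{M_{ni}}$ are bounded through $\normt{e_n}{M_n}$ and the stage defects $\Delta_{ni}$ at the price of factors $1+C\tau$.

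Collecting everything yields a one-step inequality of the form $\normt{e_{n+1}}{M_{n+1}}^2+c\tau\sum_{i=1}^s b_i\normt{E_{ni}}{A_{ni}}^2\leq(1+C\tau)\normt{e_n}{M_n}^2+C\tau\,R_n$, where $R_n$ collects the step-$n$ defect and perturbation terms. Summation over the steps and a discrete Gronwall inequality, valid once $\tau\leq\tau_0$ with $\tau_0$ determined by $\mu$ and $\kappa$, give the asserted bound; the accumulated dissipation is rewritten as $\tau\sum_k\normt{e_k}{A_k}$ via the stiff-accuracy identity $E_{ns}=e_{n+1}$ (up to defects). I expect the main obstacle to be the bookkeeping of the ALE term together with the mass-matrix time variation: one must check that the $A$-dissipation consumed by the $\eps$-absorption of $\la\B_{ni}E_{ni},E_{ni}\ra$, combined with the $O(\tau)$ losses incurred when moving every quantity from $t_{ni}$ to $t_n$, still leaves a nonnegative net dissipation, since it is this balance---rather than the size of $c_{\ale}$---that must hold uniformly and that fixes $\tau_0$ in terms of $\mu,\kappa$ alone.
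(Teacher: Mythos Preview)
Your plan is the paper's plan: the same energy argument of \cite{DziukLubichMansour_rksurf}, with Lemma~\ref{lemma-B_estimate} plus Young's inequality absorbing the ALE contribution into the $A$-dissipation. Two places in your sketch are oversimplified, and in both the $\B$-term reappears.

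First, the cross term coming out of the algebraic-stability identity is not $\la\dot E_{ni},E_{ni}\ra$ but (after freezing the mass matrix and separating the defect) essentially $\la\dot E_{ni}\,|\,M_{ni}^{-1}\,|\,M_{ni}E_{ni}+\Delta_{ni}\ra$; substituting \eqref{eq_error-eq-interStages} therefore produces, in addition to $\la\B_{ni}E_{ni}\,|\,E_{ni}\ra$, the mixed term $\la\B_{ni}E_{ni}\,|\,M_{ni}^{-1}\Delta_{ni}\ra$, which via Lemma~\ref{lemma-B_estimate} contributes the $\normt{M_{ni}^{-1}\Delta_{ni}}{A_{ni}}^{2}$ part of the final bound. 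Second, the stage $M$-norms cannot be bounded by $\normt{e_n}{M_n}$ and the $\Delta_{ni}$ alone using only invertibility of $(a_{ij})$: starting from \eqref{eq_error-eq-a} one gets $\normt{E_{ni}}{M_{ni}}^{2}\leq C\bigl(\normt{e_n}{M_n}^{2}+\tau\sum_j a_{ij}\la\dot E_{nj}\,|\,E_{ni}\ra+\normt{M_{ni}^{-1}\Delta_{ni}}{M_{ni}}^{2}\bigr)$, and the cross term $\la\dot E_{nj}\,|\,E_{ni}\ra$ must again be expanded via \eqref{eq_error-eq-interStages}, yielding a second ALE term $\la\B_{nj}E_{nj}\,|\,E_{ni}\ra\leq\eps\normt{E_{nj}}{M_{nj}}^{2}+C(\eps)\normt{E_{ni}}{A_{ni}}^{2}$; only after this step can the stage $M$-norms be closed (absorbing $\eps\normt{E_{nj}}{M_{nj}}^{2}$ on the left) and then fed back into the main inequality. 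Neither omission is fatal---the same Lemma~\ref{lemma-B_estimate}/Young device handles both---but your balance concern in the final paragraph is resolved precisely by tracking these two extra occurrences, and the resulting bound on $\normt{E_{ni}}{M_{ni}}^{2}$ carries an extra $\tau\sum_j\normt{E_{nj}}{A_{nj}}^{2}$ that is what lets the Gronwall step close for $\tau\leq\tau_0(\mu,\kappa)$.
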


\begin{proof}
  (a) We modify the proof of \cite{DziukLubichMansour_rksurf}, Lemma~7.1 or \cite{diss_Mansour}, Lemma~3.1, and we note that our presentation is closer to the second reference. In these works the following inequality has been shown, which also holds for the ALE setting:
  \begin{align}
    \nonumber
    \normt{e_{n+1}}{ M_{n+1}}^{2} & \leq (1+2\mu \tau) \normt{e_{n}}{M_{n}}^{2}
    + 2 \tau \sum_{i=1}^{s} b_{i}
    \braket{\dot{E}_{ni}| M_{n+1}^{-1} | M_{ni}E_{ni} + \Delta_{ni}}  \\
    \label{eq:7}
    & + \tau \normt{E_{n+1}}{M_{n+1}}^{2} + (1+3\tau) \tau
    \normbig{\nicefrac{\delta_{n+1}}{\tau}}{M_{n+1}^{-1}}^{2}.
  \end{align}
  We want to estimate the second term on the right-hand side of \eqref{eq:7}.
  Obviously the equation
  \begin{align}
    \nonumber
    \braket{\dot{E}_{ni} | M_{n+1}^{-1}| M_{ni}E_{ni} + \Delta_{ni}} & =
    \braket{\dot{E}_{ni} | M_{ni}^{-1}|M_{ni}E_{ni}+ \Delta_{ni}} \\
    \label{eq:15}
    & + \braket{\dot{E}_{ni} | M_{n+1}^{-1} - M_{ni}^{-1}|
      M_{ni}E_{ni} + \Delta_{ni}}
  \end{align}
  holds.  The second term on the right-hand side of \eqref{eq:15} can be
  estimated like (c.f.\ \cite{diss_Mansour}):
  \begin{align}
    \label{eq:16}
    \braket{\dot{E}_{ni} | M_{n+1}^{-1} - M_{ni}^{-1}|M_{ni} E_{ni} +
      \Delta_{ni}} & \leq C\Bigl\{ \normt{e_{n}}{M_{n}}^{2} + \sum_{j=1}^{s}
    \normt{E_{nj}}{M_{nj}}^{2} + \normt{\Delta_{nj}}{M_{nj}^{-1}}^{2} \Bigr\}.
  \end{align}
  (b) We have to modify the estimation of the first term on the right-hand side of
  \eqref{eq:15}.  Using the definition of internal stages~\eqref{eq_error-eq-interStages}, we have
  \begin{align}
    \nonumber \braket{\dot{E}_{ni} | M_{ni}^{-1}|M_{ni}E_{ni}+ \Delta_{ni} } =
    &- \normt{E_{ni}}{A_{ni}}^{2} - \braket{M_{ni} r_{ni}|E_{ni} +
      M_{ni}^{-1}\Delta_{ni}} \\
    \label{eq:17}
    &- \braket{E_{ni} | A_{ni}| M_{ni}^{-1} \Delta_{ni}} -
    \braket{\B_{ni}E_{ni}| E_{ni} + M^{-1}_{ni}\Delta_{ni}}.
  \end{align}
  The last term can be estimated by Lemma \ref{lemma-B_estimate} as
  \begin{align}
    \nonumber
    \lvert \braket{\B_{ni}E_{ni}|E_{ni}+M_{ni}^{-1}\Delta_{ni}}\rvert & \leq
    \lvert \braket{\B_{ni}E_{ni}|E_{ni}}\rvert + \lvert \braket{\B_{ni}E_{ni}|M_{ni}^{-1}\Delta_{ni}}\rvert\\
    \nonumber
    & \leq C\normt{E_{ni}}{M_{ni}} \normt{E_{ni}}{A_{ni}} +
    C\normt{E_{ni}}{M_{ni}} \normt{M^{-1}_{ni}\Delta_{ni}}{A_{ni}}\\
    \label{eq:18}
    & \leq C\normt{E_{ni}}{M_{ni}}^{2} +
    \frac{1}{4}\normt{E_{ni}}{A_{ni}}^{2} + C\normt{E_{ni}}{M_{ni}}^{2} +
    C \normt{M_{ni}^{-1}\Delta_{ni}}{A_{ni}}^{2}.
  \end{align}
  While the other terms can be estimated by the following inequality (shown by \cite{diss_Mansour}):
  \begin{align}
    \nonumber
    - \normt{E_{ni}}{A_{ni}}^{2} &+ \lvert \braket{M_{ni} r_{ni}|E_{ni}
      + M_{ni}^{-1}\Delta_{ni}}\rvert + \lvert \braket{E_{ni} | A_{ni}|
      M_{ni}^{-1}
      \Delta_{ni}} \rvert  \\
    \label{eq:19}
    &\leq -\frac{1}{2} \normt{E_{ni}}{A_{ni}}^{2} +
    \frac{1}{4}\normt{E_{ni}}{M_{ni}}^{2} + C \bigl(\normt{M_{ni}^{-1}
      \Delta_{ni}}{M_{ni}}^{2} + \normt{M_{ni}^{-1}
      \Delta_{ni}}{A_{ni}}^{2}\bigr).
  \end{align}
  We continue to estimate the right-hand side of \eqref{eq:17} with
  \eqref{eq:18}, \eqref{eq:19} and arrive to
  \begin{align}
    \label{eq:20}
    \braket{\dot{E}_{ni} | M_{n+1}^{-1}| M_{ni}E_{ni} + \Delta_{ni}} \leq
    -\frac{1}{4} \normt{E_{ni}}{A_{ni}}^{2} + C \bigl(\normt{E_{ni}}{M_{ni}}^{2}
    +\normt{M_{ni}^{-1} \Delta_{ni}}{M_{ni}}^{2} + \normt{M_{ni}^{-1}
      \Delta_{ni}}{A_{ni}}^{2}\bigr).
  \end{align}
  (c) Now we return to the main inequality \eqref{eq:7}, consider
  equation~\eqref{eq:17} and plug in the inequalities \eqref{eq:16} and
  \eqref{eq:20} to get
  \begin{align}
    \nonumber
    \normt{e_{n+1}}{M_{n+1}}^{2} -\normt{e_{n}}{M_{n}}^{2} + \frac{1}{4}
    \tau \sum_{i=1}^{s} b_{i} \normt{E_{ni}}{A_{ni}}^{2} & \leq C \tau\Bigl\{
    \normt{e_{n}}{M_{n}}^{2} + \sum_{j=1}^{s} \normt{E_{nj}}{M_{nj}}^{2}  +
    \Normstar{M_{nj}r_{nj}}{nj} \\
    \label{eq:1}
    & \phantom{\leq} +   \sum_{j=1}^{s}\bigl(\normt{M_{nj}^{-1}
      \Delta_{nj}}{M_{nj}}^{2} + \normt{M_{nj}^{-1}
      \Delta_{nj}}{A_{nj}}^{2}\bigr) +
    \normbig{\nicefrac{\delta_{n+1}}{\tau}}{M_{n+1}^{-1}}\Bigr\}.
  \end{align}
  (d) Next we estimate $\normt{E_{nj}}{M_{nj}}^{2}$, in \cite{diss_Mansour} one can find the estimate:
  \begin{align}
    \label{eq:6}
    \normt{E_{ni}}{M_{ni}}^{2} \leq C \Bigl( \normt{e_{n}}{M_{n}}^{2}  + \tau \sum_{j=1}^{s} a_{ij}
    \braket{\dot{E}_{nj} | E_{ni}} + \normt{M_{ni}^{-1}\Delta_{ni}}{M_{ni}}^{2} \Bigr).
  \end{align}
  We have to estimate $\braket{\dot{E}_{nj}| E_{ni}}$, with
  equation~\eqref{eq_error-eq-interStages} we get
  \begin{align}
    \label{eq:24}
    \braket{\dot{E}_{nj}|E_{ni}} = - \braket{E_{nj} | A_{nj} | E_{ni}} -
    \braket{M_{nj} r_{nj} | E_{ni}} - \braket{\B_{nj}E_{nj} | E_{ni}}.
  \end{align}
  The following inequalities can be shown easily using Young's-inequality ($\varepsilon$ will be chosen
  later) and Cauchy--Schwarz inequality:
  \begin{align*}
    -\braket{E_{nj} | A_{nj} | E_{ni}} &\leq C(\kappa) \bigl(
    \normt{E_{nj}}{A_{nj}}^{2} +
    \normt{E_{ni}}{A_{ni}}^{2}\bigr),\\
    - \braket{\B_{nj}E_{nj}| E_{ni}} &\leq \varepsilon \normt{E_{nj}}{M_{nj}}^{2}
    + \frac{1}{4\varepsilon} C(\kappa)
    \normt{E_{ni}}{A_{ni}}^{2} \\
    - \braket{M_{nj}r_{nj} | E_{ni}} & \leq C(\mu,\kappa) \Bigl(
    \frac{1}{4\varepsilon} \Normstar{M_{nj}r_{nj}}{nj}^{2} + \varepsilon \bigl(
    \normt{E_{ni}}{M_{ni}}^{2} + \normt{E_{ni}}{A_{ni}}^{2}\bigr)\Bigr).
   \end{align*}
   Using the above three inequalities to estimate \eqref{eq:24}, we get
   \begin{align}
     \label{eq:25}
     \braket{\dot{E}_{nj} | E_{ni}} \leq  C(\mu,\kappa) \Bigl( \varepsilon
     \normt{E_{ni}}{M_{ni}}^{2} + C(\varepsilon)
     \normt{E_{ni}}{A_{ni}}^{2} + \normt{E_{nj}}{A_{nj}}^{2} +
     C(\varepsilon) \Normstar{M_{nj}r_{nj}}{nj}^{2}\Bigr).
   \end{align}
   Using this for a sufficiently small $\varepsilon$ (independent of $\tau$) we can proceed by estimating \eqref{eq:6} further as
   \begin{align*}
     \normt{E_{ni}}{M_{ni}}^{2} \leq C \Bigl( \normt{e_{n}}{M_{n}}^{2} + \tau
     \sum_{j=1}^{s} a_{ij} \bigl( \normt{E_{nj}}{A_{nj}}^{2} +
     \Normstar{M_{nj}r_{nj}}{nj}^{2}\bigr) +
     \normt{M_{ni}^{-1}\Delta_{ni}}{M_{ni}}^{2} \Bigr).
   \end{align*}
   (e) Now for a sufficiently small $\tau$ we can use the above inequality to estimate
   \eqref{eq:1} to
   \begin{align*}
     \normt{e_{n+1}}{M_{n+1}}^{2} & - \normt{e_{n}}{M_{n}}^{2} + \frac{1}{8}
     \tau \sum_{i=1}^{s} b_{i} \normt{E_{ni}}{A_{ni}}^{2} \leq C\tau \Bigl\{
     \normt{e_{n}}{M_{n}}^{2} + \sum_{i=1}^{s}\Normstar{M_{ni}r_{ni}}{ni}^{2} \\
     & + \sum_{i=1}^{s} \bigl( \normt{M_{ni}^{-1} \Delta_{ni}}{M_{ni}}^{2} +
     \normt{M_{ni}^{-1}\Delta_{ni}}{A_{ni}}^{2}\bigr) +
     \normbig{\nicefrac{\delta_{n+1}}{\tau}}{M_{n+1}^{-1}}\Bigr\}.
   \end{align*}
   Summing up over $n$ and applying a discrete Gronwall inequality yields the
   desired result.
\end{proof}

\section{Error estimates for Backward Difference Formulas}
\label{section_BDF}

We apply a backward difference formula (BDF) as a temporal discretization to the ODE system \eqref{eq_ES-ODE}, coming from the ALE ESFEM space discretization of the parabolic evolving surface PDE.

In the following we extend the stability result for BDF methods of \cite{LubichMansourVenkataraman_bdsurf}, Lemma 4.1 to the case of ALE evolving surface finite element method. Apart from the properties of the ALE ESFEM the proof is based on the G--stability theory of \cite{Dahlquist} and the multiplier technique of \cite{NevanlinnaOdeh}. We will prove that the fully discrete method is unconditionally stable for the $k$-step BDF methods for $k\leq5$.

\bigskip
We recall the $k$-step BDF method for \eqref{eq_ES-ODE} with step size $\tau>0$:
\begin{equation}\label{def_BDF}
    \disp \frac{1}{\tau} \sum_{j=0}^k \delta_j M(t_{n-j})\alpha_{n-j} + A(t_n)\alpha_n + \B(t_n)\alpha_n = 0, \qquad (n \geq k),
\end{equation}
where the coefficients of the method are given by $\delta(\zeta)=\sum_{j=1}^k \delta_j \zeta^j=\sum_{\ell=1}^k \frac{1}{\ell}(1-\zeta)^\ell$, while the starting values are $\alpha_0, \alpha_1, \dotsc, \alpha_{k-1}$. The method is known to be $0$-stable for $k\leq6$ and have order $k$ (for more details, see \cite[Chapter V.]{HairerWannerII}).

\bigskip
Instead of \eqref{eq_ES-ODE} let us consider again the perturbed problem
\begin{equation}\label{eq_perturbed_ODE_BDF}
    \disp
    \begin{cases}
        \disp \diff \big(M\t \tilde{\alpha}\t\big) + A\t \tilde{\alpha}\t + \B\t \tilde{\alpha}\t = M\t r\t \\
        \disp \phantom{\diff \big(M\t \alpha\t\big) + A\t \alpha\t + \B\t }\tilde{\alpha}(0) = \tilde{\alpha}_0.
    \end{cases}
\end{equation}

By substituting the true solution $\tilde{\alpha}\t$ of the perturbed problem into the BDF method \eqref{def_BDF}, we obtain
\begin{equation*}
    \disp \frac{1}{\tau} \sum_{j=0}^k \delta_j M(t_{n-j})\tilde{\alpha}_{n-j} + A(t_n)\tilde{\alpha}_n + \B(t_n)\tilde{\alpha}_n = -d_n, \qquad (n \geq k).
\end{equation*}

By introducing the error $e_n = \alpha_n - \tilde{\alpha}(t_n)$, multiplying by $\tau$, and by subtraction we have the error equation
\begin{equation}\label{eq_BDF-error-eq}
    \disp \sum_{j=0}^k \delta_j M_{n-j} e_{n-j} + \tau A_n e_n + \tau \B_n e_n = \tau d_n, \qquad (n \geq k).
\end{equation}

\bigskip
We recall two important preliminary results.
\begin{lemma}[\cite{Dahlquist}]
    Let $\delta(\zeta)$ and $\mu(\zeta)$ be polynomials of degree at most $k$ (at least one of them of exact degree $k$) that have no common divisor. Let $\la \,.\, | \,.\, \ra$ be an inner product on $\R^N$ with associated norm $\| \,.\, \|$. If
    \begin{equation*}
        \textnormal{Re} \frac{\delta(\zeta)}{\mu(\zeta)} > 0, \qquad \textrm{for} \quad |\zeta|<1,
    \end{equation*}
    then there exists a symmetric positive definite matrix $G = (g_{ij}) \in \R^{k\times k}$ and real $\gamma_0,\dotsc,\gamma_k$ such that for all $v_0,\dotsc,v_k\in\R^N$
    \begin{equation*}
        \Big\la \sum_{i=0}^k \delta_i v_{k-i} \Big| \sum_{i=0}^k \mu_i v_{k-i}  \Big\ra = \sum_{i,j=1}^k g_{ij} \la v_i \,|\, v_j \ra - \sum_{i,j=1}^k g_{ij} \la v_{i-1} \,|\, v_{j-1} \ra + \Big\| \sum_{i=0}^k \gamma_i v_i \Big\|^2
      \end{equation*}
      holds.
\end{lemma}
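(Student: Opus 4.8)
The plan is to reduce the asserted inner-product identity to a purely algebraic scalar identity, and then to build $G$ and the $\gamma_i$ from a spectral factorization of the trigonometric polynomial attached to $\delta$ and $\mu$. First I would note that both sides depend on $v_0,\dots,v_k$ only through the symmetric quantities $\langle v_i | v_j\rangle$, so each side has the form $\sum_{i,j}c_{ij}\langle v_i|v_j\rangle$ with a real symmetric array $(c_{ij})$. Consequently the identity holds for every $N$ and every inner product if and only if the two coefficient arrays coincide, which is exactly the scalar case $N=1$. Hence it suffices to find a symmetric positive definite $G=(g_{ij})$ and reals $\gamma_0,\dots,\gamma_k$ with, as an identity of polynomials in the scalars $v_i$,
\[
\Big(\sum_{i=0}^k \delta_i v_{k-i}\Big)\Big(\sum_{i=0}^k \mu_i v_{k-i}\Big) = \sum_{i,j=1}^k g_{ij}v_i v_j - \sum_{i,j=1}^k g_{ij}v_{i-1}v_{j-1} + \Big(\sum_{i=0}^k \gamma_i v_i\Big)^2 .
\]

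I would then pass to generating functions: to a symmetric form $\sum q_{ij}v_iv_j$ I attach the two-variable polynomial $\sum q_{ij}\zeta^i\eta^j$. Writing $\widetilde{\delta}(\zeta)=\zeta^k\delta(\zeta^{-1})$, $\widetilde{\mu}(\zeta)=\zeta^k\mu(\zeta^{-1})$ for the reversed polynomials and $\gamma(\zeta)=\sum_i\gamma_i\zeta^i$, the left-hand form corresponds to the symmetrization of $\widetilde{\delta}(\zeta)\widetilde{\mu}(\eta)$, the telescoping $G$-term to $(1-(\zeta\eta)^{-1})\sum_{i,j=1}^k g_{ij}\zeta^i\eta^j$, and the square to $\gamma(\zeta)\gamma(\eta)$. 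Substituting $\eta=\zeta^{-1}$ annihilates the $G$-term, and using $\widetilde{\delta}(\zeta)\widetilde{\mu}(\zeta^{-1})=\delta(\zeta^{-1})\mu(\zeta)$ the identity collapses to the single scalar requirement
\[
\tfrac12\big[\delta(\zeta)\mu(\zeta^{-1})+\delta(\zeta^{-1})\mu(\zeta)\big]=\gamma(\zeta)\gamma(\zeta^{-1}),
\]
which on $|\zeta|=1$ reads $\operatorname{Re}\!\big(\delta(\zeta)\overline{\mu(\zeta)}\big)=|\gamma(\zeta)|^2$.

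This is where the hypothesis enters. Since $\operatorname{Re}\big(\delta/\mu\big)>0$ in the open disk and $\mu$ has only finitely many zeros on the circle, continuity forces the real trigonometric polynomial $\theta\mapsto\operatorname{Re}\big(\delta(e^{i\theta})\overline{\mu(e^{i\theta})}\big)$ to be nonnegative on $|\zeta|=1$. By the Fejér–Riesz theorem (in its form for nonnegative symmetric trigonometric polynomials, which here have real coefficients) it factors as $|\gamma(e^{i\theta})|^2$ for a real-coefficient polynomial $\gamma$ of degree $\le k$; this pins down the $\gamma_i$ and the square term. With $\gamma$ fixed, the symmetric two-variable polynomial $R(\zeta,\eta):=\tfrac12[\widetilde{\delta}(\zeta)\widetilde{\mu}(\eta)+\widetilde{\delta}(\eta)\widetilde{\mu}(\zeta)]-\gamma(\zeta)\gamma(\eta)$ vanishes on the locus $\zeta\eta=1$, hence is divisible by $\zeta\eta-1$; reading off the quotient produces the entries $g_{ij}$ and a symmetric matrix $G$ supported on indices $1,\dots,k$.

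The main obstacle is to show that the $G$ obtained this way is genuinely \emph{positive definite}, not merely symmetric or semidefinite. This is precisely the point where the strictness of the inequality $\operatorname{Re}(\delta/\mu)>0$ (as opposed to $\ge 0$) and the coprimality of $\delta$ and $\mu$ must be used: they rule out directions in which the telescoping quadratic form degenerates, i.e.\ nonzero data $v_0,\dots,v_{k-1}$ for which $V^{T}GV$ would vanish. I would establish this by interpreting $V^{T}GV$ as a stored-energy/Lyapunov functional for the recurrence generated by $\delta$ and $\mu$ and showing, via the strict positive-real property and a Parseval/contour-integral argument, that it is bounded below by a positive multiple of $|V|^2$; coprimality guarantees the associated companion realization admits no nontrivial null direction. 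The remaining degree bookkeeping (that $R$ has degree $\le k$ in each variable and that the quotient is supported on $1\le i,j\le k$) is routine and I would only check it at the end.
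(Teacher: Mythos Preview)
The paper does not prove this lemma; it is quoted as a known result of Dahlquist and invoked as a black box in the BDF stability proof that follows. There is therefore no proof in the paper to compare your attempt against.

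On the merits of your outline: the strategy is the classical one --- reduce to the scalar case by bilinearity, apply the Fej\'er--Riesz theorem to factor the nonnegative trigonometric polynomial $\operatorname{Re}\bigl(\delta(e^{i\theta})\overline{\mu(e^{i\theta})}\bigr)$ as $|\gamma(e^{i\theta})|^2$, and then divide the residual symmetric two-variable polynomial by $\zeta\eta-1$ to read off the entries of $G$. These steps are correct in spirit and constitute the standard construction. The genuine weak point is precisely the one you yourself flag: proving that the resulting $G$ is positive \emph{definite}, not merely symmetric or semidefinite. Your Lyapunov/Parseval sketch names the right intuition but is not a proof as stated --- you have not specified the recurrence, written down the summation identity, or shown concretely how strict positivity of $\operatorname{Re}(\delta/\mu)$ on the open disk together with coprimality rules out a nontrivial null vector for $G$. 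In Dahlquist's original paper (and in the textbook treatment of Hairer and Wanner, \emph{Solving Ordinary Differential Equations II}, Section~V.6) this is where the real work lies. Until that step is made precise, your argument is incomplete at its most delicate point.
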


Together with this result, the case $\mu(\zeta)=1-\eta\zeta$ will play an important role:
\begin{lemma}[\cite{NevanlinnaOdeh}]
    If $k\leq5$, then there exists $0\leq\eta<1$ \st\ for $\delta(\zeta)=\sum_{\ell=1}^k \frac{1}{\ell}(1-\zeta)^\ell$,
    \begin{equation*}
        \textnormal{Re} \frac{\delta(\zeta)}{1-\eta\zeta} > 0, \qquad \textrm{for} \quad |\zeta|<1.
    \end{equation*}
    The smallest possible values of $\eta$ is found to be $\eta=0, \ 0, \ 0.0836, \ 0.2878, \ 0.8160$ for $k=1,2,\dotsc,5$, \resp.
\end{lemma}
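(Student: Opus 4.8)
The plan is to read the statement as a claim in complex analysis and prove it by a minimum‐principle argument. Fix $\eta\in[0,1)$ and set $\phi(\zeta):=\delta(\zeta)/(1-\eta\zeta)$. Since $\delta$ is a polynomial and the only pole of $\phi$ sits at $\zeta=1/\eta>1$ (for $\eta=0$ there is no pole at all), $\phi$ is analytic on a neighbourhood of the closed unit disk $\overline{D}$, so $\textnormal{Re}\,\phi$ is harmonic on the open disk $D$ and continuous on $\overline{D}$. By the minimum principle for harmonic functions, $\min_{\overline{D}}\textnormal{Re}\,\phi=\min_{\partial D}\textnormal{Re}\,\phi$, so it suffices to prove $\textnormal{Re}\,\phi\geq0$ on $|\zeta|=1$: then $\textnormal{Re}\,\phi\geq0$ on all of $\overline{D}$, and strict positivity on $D$ is automatic, because if $\textnormal{Re}\,\phi$ vanished at an interior point it would attain its minimum there and hence be constant, contradicting $\textnormal{Re}\,\phi(0)=\delta(0)=\sum_{\ell=1}^k\frac1\ell>0$. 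Note that $\delta(1)=0$ forces $\textnormal{Re}\,\phi=0$ at $\zeta=1$, which is exactly why only the non-strict inequality can hold on the boundary.

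Next I would reduce the boundary estimate to the non-negativity of an explicit trigonometric polynomial. Writing $\zeta=e^{i\theta}$ and clearing the conjugate, the positive factor $|1-\eta e^{i\theta}|^2=1-2\eta\cos\theta+\eta^2$ leaves the sign unchanged, so $\textnormal{Re}\,\phi(e^{i\theta})\geq0$ is equivalent to
\[
 g_k(\theta):=\textnormal{Re}\big[\delta(e^{i\theta})(1-\eta e^{-i\theta})\big]\geq0\qquad\text{for all }\theta.
\]
Expanding $\delta(e^{i\theta})=\sum_{\ell=1}^k\frac1\ell(1-e^{i\theta})^\ell$ turns $g_k$ into a cosine/sine polynomial of degree $k$ that depends affinely on $\eta$, and this is the object to be controlled.

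The verification then proceeds case by case in $k$. For $k=1,2$ one has $\eta=0$ and clean algebraic identities: $g_1(\theta)=1-\cos\theta\geq0$, and for $k=2$ the numerator collapses to the perfect square $(1-\cos\theta)^2\geq0$, vanishing only at $\theta=0$. For $k=3,4,5$ a strictly positive $\eta$ is unavoidable; here one writes $g_k$ as a polynomial in $c=\cos\theta$ (tracking the odd part carefully) and checks non-negativity on $[-1,1]$ for the stated value of $\eta$. The tabulated numbers $0.0836,\,0.2878,\,0.8160$ are precisely the smallest admissible multipliers, at which the minimum of $g_k$ over $\theta$ first touches zero at some angle in addition to $\theta=0$; pinning them down means locating the minimising angle (a root of $g_k'$) and verifying the sign of $g_k$ there, which is a rigorous numerical computation rather than a closed-form identity.

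The hard part will be this last step for $k=4$ and especially $k=5$. As $\eta\to1$ the denominator $1-2\eta\cos\theta+\eta^2$ nearly degenerates near $\theta=0$, where $\delta(e^{i\theta})$ also vanishes, so the sign of $\textnormal{Re}\,\phi$ close to $\zeta=1$ is decided by a delicate cancellation between the two vanishing factors. A correct proof must expand both to sufficient order at $\theta=0$ and compare them quantitatively; this same borderline analysis is what ultimately shows that no admissible $\eta<1$ exists for $k=6$, so the argument has to be sharp enough near $\zeta=1$ to separate the last successful case $k=5$ from the first failing case $k=6$.
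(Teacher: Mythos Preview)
The paper does not prove this lemma: it is quoted from Nevanlinna and Odeh and used as a black box in the stability analysis of the BDF discretization. There is therefore no proof in the paper against which to compare your attempt.

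That said, your outline is essentially the classical argument and is correct. The reduction via the minimum principle for harmonic functions to the boundary circle is sound, as is the observation that strict positivity in the open disk follows from non-negativity on the boundary together with $\phi(0)=\sum_{\ell=1}^k 1/\ell>0$. One small remark: since $\delta$ has real coefficients, $\textnormal{Re}\,\delta(e^{i\theta})$ is even and $\textnormal{Im}\,\delta(e^{i\theta})$ is odd in $\theta$, so $g_k(\theta)$ is automatically even and is a genuine polynomial in $\cos\theta$; there is no odd part to track separately. Your explicit verifications for $k=1,2$ are right. For $k=3,4,5$ you correctly identify that the tabulated values of $\eta$ are thresholds at which the boundary minimum of $g_k$ first touches zero at a second angle besides $\theta=0$, and that pinning them down is a rigorous numerical computation rather than a closed-form identity; this is exactly how the original reference proceeds.
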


\bigskip
We now state and prove the analogous stability result for the BDF methods. Again the stability is unconditional.
\begin{lemma}\label{lemma_BDF-stability}
    For a $k$-step BDF method with $k\leq5$ there exists a $\tau_0 >0$, depending only on the constants $\mu$ and $\kappa$, \st\ for $\tau\leq\tau_0$ and $t_n=n \tau\leq T$, that the error $e_n$ is bounded by
    \begin{equation*}
        \disp |e_n|_{M_n}^2 + \tau \sum_{j=k}^n |e_j|_{A_j}^2 \leq C \tau \sum_{j=k}^n \|d_j\|_{\ast, j}^2 + C \max_{0\leq i \leq k-1} |e_i|_{M_i}^2
    \end{equation*}
    where $\|w\|_{\ast, j}^2=w^T(A\t+M\t)\inv w$. The constant $C$ is independent of $h, \ \tau$ and $n$, but depends on $\mu, \ \kappa, \ T$ and on the norm of the difference of the velocities.
\end{lemma}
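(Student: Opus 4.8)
The plan is to adapt the energy argument of \cite{LubichMansourVenkataraman_bdsurf}, Lemma 4.1, combining the G--stability theory of \cite{Dahlquist} with the Nevanlinna--Odeh multiplier, and to absorb the new ALE contribution through Lemma \ref{lemma-B_estimate}. First I would test the error equation \eqref{eq_BDF-error-eq} in the Euclidean inner product against the multiplier $e_n - \eta e_{n-1}$, where $\eta$ is the Nevanlinna--Odeh constant attached to the $k$-step scheme (which exists precisely for $k\leq 5$, with $\mu(\zeta)=1-\eta\zeta$ coprime to $\delta(\zeta)$). This splits the work into four groups: the BDF memory term $\big\la \sum_j \delta_j M_{n-j} e_{n-j}\,\big|\, e_n - \eta e_{n-1}\big\ra$, the stiffness term $\tau\la A_n e_n\,|\,e_n - \eta e_{n-1}\ra$, the ALE term $\tau\la \B_n e_n\,|\,e_n - \eta e_{n-1}\ra$, and the defect term $\tau\la d_n\,|\,e_n - \eta e_{n-1}\ra$.

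For the memory term I would freeze the mass matrix, writing $M_{n-j}=M_n+(M_{n-j}-M_n)$. The frozen part has exactly the form to which Dahlquist's lemma applies in the fixed $M_n$--inner product $\la M_n\cdot\,|\,\cdot\ra$, producing a telescoping difference of $G$--weighted norms $|\mathcal{E}_n|_{G,M_n}^2 - |\mathcal{E}_{n-1}|_{G,M_n}^2$ of the block $(e_{n-k+1},\dots,e_n)$, plus a nonnegative square. The time dependence of $M$ is controlled by \eqref{eq_mtxlemma-M}: both the frozen-versus-actual discrepancy $\sum_j\delta_j(M_{n-j}-M_n)e_{n-j}$ and the mismatch between $|\cdot|_{G,M_n}$ and $|\cdot|_{G,M_{n-1}}$ contribute only factors $(1+C\tau)$ and lower--order terms in $\normt{e_{n-j}}{M}^2$, which are harmless after summation. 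The stiffness term supplies the dissipation: from $\tau\la A_n e_n\,|\,e_n-\eta e_{n-1}\ra \geq \tau\normt{e_n}{A_n}^2 - \tau\eta\normt{e_n}{A_n}\normt{e_{n-1}}{A_n}$, using $\eta<1$ together with \eqref{eq:11b}, the summed stiffness contributions leave a net positive reserve of order $(1-\eta)\tau\sum_j\normt{e_j}{A_j}^2$ on the left.

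The crux, and the only genuinely new ingredient, is the ALE term. Here I would apply Lemma \ref{lemma-B_estimate} to both pieces, obtaining
\[
\tau\big|\la \B_n e_n\,|\,e_n-\eta e_{n-1}\ra\big| \leq \tau c_{\ale}\normt{e_n}{M_n}\big(\normt{e_n}{A_n}+\eta\normt{e_{n-1}}{A_n}\big).
\]
The decisive feature is that Lemma \ref{lemma-B_estimate} delivers the \emph{mixed} product $\normt{\cdot}{M}\normt{\cdot}{A}$ rather than $\normt{\cdot}{A}^2$. A weighted Young inequality then splits each term into $\varepsilon\tau\big(\normt{e_n}{A_n}^2+\normt{e_{n-1}}{A_n}^2\big)$, absorbed for small $\varepsilon$ into the $(1-\eta)$ stiffness reserve, plus $C_\varepsilon\tau\normt{e_n}{M_n}^2$, which only feeds the Gronwall step. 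The defect is handled analogously: Cauchy--Schwarz in the $(A_n+M_n)$--inner product gives $\la d_n\,|\,v\ra\leq\Normstar{d_n}{n}\big(\normt{v}{A_n}^2+\normt{v}{M_n}^2\big)^{1/2}$, whose $A$--part is again absorbed and whose remainder forms the $C\tau\sum_j\Normstar{d_j}{j}^2$ on the right.

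Finally I would sum over $n$ from $k$ to the target index: the $G$--norms telescope up to $(1+C\tau)$ factors, the absorbed $A$--terms leave a clean positive multiple of $\tau\sum_j\normt{e_j}{A_j}^2$, the accumulated $\tau\normt{e_n}{M_n}^2$ terms are controlled by a discrete Gronwall inequality for $\tau\leq\tau_0$, the starting value $|\mathcal{E}_{k-1}|_{G,M_{k-1}}^2$ is bounded by $C\max_{0\leq i\leq k-1}\normt{e_i}{M_i}^2$ using positive definiteness of $G$, and the same positive definiteness yields $\normt{e_n}{M_n}^2\leq C|\mathcal{E}_n|_{G,M_n}^2$ on the left. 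I expect the main obstacle to be exactly the ALE term: one must check that the $M$--$A$ mixed bound of Lemma \ref{lemma-B_estimate} is compatible with the $\eta<1$ dissipation reserve, i.e.\ that a single $\varepsilon$ can absorb the stiffness contributions of both $\B_n$ and $d_n$ simultaneously, uniformly in $h$ and $\tau$; the remaining bookkeeping for the time-dependent matrices is routine given \eqref{eq_mtxlemma-M}--\eqref{eq:11b}.
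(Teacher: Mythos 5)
Your strategy coincides with the paper's own proof in almost every step: the same Nevanlinna--Odeh multiplier $e_n-\eta e_{n-1}$, the same freezing of the mass matrix (the paper rewrites the error equation as $M_n\sum_{j=0}^k\delta_j e_{n-j}+\tau A_ne_n+\tau \B_ne_n=\tau d_n+\sum_{j=1}^k\delta_j(M_n-M_{n-j})e_{n-j}$, which is exactly your splitting), Dahlquist's G-stability in the $M_n$-weighted inner product, the ALE term bounded via Lemma \ref{lemma-B_estimate} and Young's inequality and absorbed into the $(1-\eta)$ stiffness reserve, and a discrete Gronwall argument at the end. The obstacle you flag at the end (whether one $\eps$ can absorb both the $\B_n$ and the defect contributions) is not an issue; that absorption is precisely what the paper does. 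The genuine gap lies elsewhere: in the treatment of the first step. You start the multiplier estimate at $n=k$ and expect the starting data to enter only through $|\mathcal{E}_{k-1}|_{G}^2\leq C\max_{0\leq i\leq k-1}|e_i|_{M_i}^2$.

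This fails for $k=3,4,5$, where $\eta>0$. At $n=k$ the multiplier is $e_k-\eta e_{k-1}$, so the stiffness cross term $\tau\eta\la e_k\,|\,A_k\,|\,e_{k-1}\ra$ and the ALE bound $\tau c_{\ale}|e_k|_{M_k}\,\eta\,|e_{k-1}|_{A_k}$ leave, after Young's inequality and \eqref{eq:11b}, a term $c\,\tau|e_{k-1}|_{A_{k-1}}^2$ on the right-hand side. There is no dissipation term with index $j=k-1$ on the left to absorb it (your sum starts at $j=k$), and since $e_{k-1}$ is an arbitrary starting value, an $A$-seminorm of it cannot be bounded by $\max_{0\leq i\leq k-1}|e_i|_{M_i}^2$ uniformly in $h$ (an inverse inequality would cost $h^{-2}$, while the lemma asserts $C$ independent of $h$). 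So as written your argument proves a weaker inequality, with the inadmissible extra term $\tau|e_{k-1}|_{A_{k-1}}^2$ on the right. The paper avoids this by running the multiplier estimate only for $n\geq k+1$, so the dangling term is $C\eta^2\tau|e_k|_{A_k}^2$; crucially $e_k$, unlike $e_{k-1}$, is a \emph{computed} value, i.e.\ the error equation \eqref{eq_BDF-error-eq} holds at $n=k$. Taking the inner product of that equation with $e_k$ itself (no multiplier) and using Lemma \ref{lemma-B_estimate}, Young's inequality and \eqref{eq_mtxlemma-M} yields $|e_k|_{M_k}^2+\tau|e_k|_{A_k}^2\leq C\tau\|d_k\|_{\ast,k}^2+C\max_{0\leq i\leq k-1}|e_i|_{M_i}^2$, which supplies both the dangling term and the $j=k$ summand on the left. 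This separate first-step estimate is the ingredient missing from your proposal; with it added, the rest of your argument goes through.
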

\begin{proof}
Our proof follows the one of \cite{LubichMansourVenkataraman_bdsurf} Lemma 4.1.

\medskip
(a) The starting point of the proof is the following reformulation of the error equation \eqref{eq_BDF-error-eq}
\begin{equation*}
        \disp M_{n} \sum_{j=0}^k \delta_j e_{n-j} + \tau A_n e_n + \tau \B_n e_n = \tau d_n + \sum_{j=1}^k \delta_j \big(M_n-M_{n-j}\big) e_{n-j}
\end{equation*}
and using a modified energy estimate. We multiply both sides with $e_n - \eta e_{n-1}$, for $n\geq k+1$, which gives us:
\begin{equation*}
    I_n + II_n  = III_n + IV_n - V_n,
\end{equation*}
where
{\setlength\arraycolsep{.13889em}
\begin{eqnarray*}
    \disp I_n   &=& \Big\la \sum_{j=0}^k \delta_j e_{n-j} \big| M_n \big| e_n - \eta e_{n-1} \Big\ra, \\
    \disp II_n  &=& \tau \big\la e_n | A_n | e_n - \eta e_{n-1} \big\ra, \\
    \disp III_n &=& \tau \la d_n | e_n - \eta e_{n-1} \ra, \\
    \disp IV_n  &=& \sum_{j=1}^k \la e_{n-j} | M_n-M_{n-j} | e_n - \eta e_{n-1} \ra, \\
    \disp V_n   &=& \tau \la e_n | \B_n | e_n - \eta e_{n-1} \ra .
\end{eqnarray*}}

\bigskip
(b) The estimations of $I_n, \ II_n, \ III_n$ and $IV_n$ are the same as in the proof in \cite{LubichMansourVenkataraman_bdsurf}. We note that during the estimation of $III_n$ we used Young's inequality with sufficiently small ($\tau$ independent) $\eps$.

\bigskip
The new term $V_n$ is estimated using Lemma \ref{lemma-B_estimate} and Young's inequality (with sufficiently small $\eps$, independent of $\tau$):
{\setlength\arraycolsep{.13889em}
\begin{eqnarray*}
    \disp |V_n| &\leq& C \tau |e_n|_{M_n} \big( |e_n|_{A_n} +\eta|e_{n-1}|_{A_{n-1}}\big) \\
    \disp &=& C \tau |e_n|_{M_n} |e_n|_{A_n} + C\eta \tau |e_n|_{M_n} |e_{n-1}|_{A_{n-1}} \\
    \disp &\leq& \tau C \frac{1}{\eps} |e_n|_{M_n}^2 + \eps \tau |e_n|_{A_n}^2  +  \tau C \frac{1}{\eps} |e_n|_{M_n}^2 + \eps \eta^2\tau \disp |e_{n-1}|_{A_{n-1}}^2.
\end{eqnarray*}}

\bigskip
(c) Combining all estimates, choosing a sufficiently small $\eps$ (independently of $\tau$), and summing up gives, for $\tau\leq\tau_0$ and for $k\geq n+1$:
\begin{equation*}
    |E_n|_{G,n}^2 + (1-\eta)\frac{\tau}{8}\sum_{j=k+1}^n |e_j|_{A_j}^2 \leq C \tau \sum_{j=k}^{n-1} |E_j|_{G,j}^2 + C \tau \sum_{j=k+1}^n \|d_j\|_{\ast,j}^2 + C \eta^2 \tau |e_k|_{A_k}^2,
\end{equation*}
where $E_n=(e_n,\dotsc,e_{n-k+1})$, and the $|E_n|_{G,n}^2 := \sum_{i,j=1}^k g_{ij}\la e_{n-k+1} | M_n | e_{n-k+j}\ra$.

This is the same inequality as in \cite{LubichMansourVenkataraman_bdsurf}, hence we can also proceed with the discrete Gronwall inequality.

\bigskip
(d) To achieve the stated result we have to estimate the extra term $C \ \big(|e_k|_{M_k}^2 + \tau |e_k|_{A_k}^2\big)$. For that we take the inner product of the error equation for $n=k$ with $e_k$ to obtain
\begin{equation*}
    \delta_0|e_k|_{M_k}^2 + \tau |e_k|_{A_k}^2 = \tau \la d_k \,|\, e_k\ra - \sum_{j=0}^k \delta_j \la M_{k-j} e_{k-j} \,|\, e_k\ra + \tau |\la e_k \,|\, B_k \,|\, e_k\ra|.
\end{equation*}
Then the use of Lemma \ref{lemma-B_estimate} and Young's inequality (again with sufficiently small $\eps$) and \eqref{eq_mtxlemma-M}, yields
\begin{equation*}
    |e_k|_{M_k}^2 + \tau |e_k|_{A_k}^2 \leq C \tau \|d_k\|_{\ast,k}^2 + C \max_{0\leq i \leq k-1} |e_i|_{M_i}^2.
\end{equation*}
The insertion of this completes the proof.
\end{proof}

\section{Error bounds for the fully discrete solutions}\label{section_errorbounds}

We start by connecting the stability results of the previous two sections with the \co\ solution of the parabolic problem, by investigating the behaviour of the difference of the discrete numerical solution and an arbitrary projection of the true solution $u$ to the evolving surface finite element space $S_h\t$.

Then, by choosing a specific projection, we will show the optimal rate of convergence of this difference, which -- together with the stability results -- leads us to our main results. We will prove that the full discretizations, ALE evolving surface finite element method coupled with Runge--Kutta or BDF methods of the parabolic problem \eqref{eq_ES-PDE-strong-form} (and hence \eqref{eq_MD-PDE-strong} also), have an optimal order and unconditional convergence both in space and time.

\subsection{The semidiscrete residual}
We follow \cite{LubichMansourVenkataraman_bdsurf} Section 5 by setting
\begin{equation*}
    \disp P_h:H^1(\Gat) \to S_h\t\subset H^1(\Ga_h\t)
\end{equation*}
an arbitrary projection of the exact solution to the finite dimensional space $S_h\t$. Later we will choose $P_h$ to be a Ritz projection.

\bigskip
We define the finite element residual $R_h(.,t) = \sum_{j=1}^N r_j\t \chi_j(.,t)\in S_h\t$ as
\begin{equation}\label{eq_residual}
    \int_{\Ga_h} R_h \phi_h = \diff \int_{\Ga_h} P_h u \phi_h + \int_{\Ga_h} \nbgh(P_h u)\cdot \nbgh \phi_h + \int_{\Ga_h} (P_h u) (\Wh-V_h)\cdot \nbgh \phi_h - \int_{\Ga_h} (P_h u)  \amath \phi_h,
\end{equation}
where $\phi_h\in S_h\t$, and the projection of the true solution $u$ is given as
\begin{equation*}
    P_h u(.,t) = \sum_{j=1}^N \tilde{\alpha}_j\t \chi_j(.,t).
\end{equation*}

The above problem is equivalent to the ODE system with the vector $r\t=(r_j\t)\in\R^N$:
\begin{equation*}
    \diff \big(M\t \tilde{\alpha}\t\big) + A\t \tilde{\alpha}\t + \B\t \tilde{\alpha}\t = M\t r\t,
\end{equation*}
which is the perturbed ODE system \eqref{eq_rk_pertubated} and \eqref{eq_perturbed_ODE_BDF}.

\subsection{Error bounds for the time integrations}

The direct application of the stability lemmas for Runge--Kutta methods and BDF methods (Lemma \ref{lem_rk-stability} and Lemma \ref{lemma_BDF-stability}, \resp) gives optimal order error estimates between the projection $P_hu(.,t_n)$ and the fully discrete solution $U_h^n$ (ALE ESFEM combined with a temporal discretization), i.e.
\begin{equation*}
    U_h^n := \sum_{j=1}^N \alpha_j^n\chi_j(.,t),
\end{equation*}
where the vectors $\alpha^n$ are generated, either by an $s$-stage implicit Runge--Kutta method, or by a BDF method of order $k$.

\subsubsection{Implicit Runge--Kutta methods}
Now we can prove the analogous error estimation result from \cite{DziukLubichMansour_rksurf} Theorem~8.1 (\cite{diss_Mansour} Theorem~5.1).
\begin{theorem}\label{thm_R-K-errors}
    Consider the arbitrary Lagrangian Eulerian evolving surface finite element method as space discretization of the parabolic problem \eqref{eq_ES-PDE-strong-form} with time discretization by an $s$--stage implicit Runge--Kutta method satisfying Assumption \ref{assump_RK-method-assumptions}. Assume that $P_h u$ has \co\ discrete ALE material derivatives up to order $q+2$. Then there exists $\tau_0>0$, independent of $h$, \st\ for $\tau \leq \tau_0$, for the error $E_h^n=U_h^n-P_h u(.,t_n)$ the following estimate holds for $t_n=n\tau \leq T$:
    {\setlength\arraycolsep{.13889em}
    \begin{eqnarray*}
        \disp \|E_h^n\|_{L^2(\Ga_h(t_n))} &+& \Big( \tau \sum_{j=1}^n \|\nb_{\Ga_h(t_j)} E_h^j \|_{L^2(\Ga_h(t_j))}^2 \Big)^{\frac{1}{2}} \\
        \disp \leq C \tilde{\beta}_{h,q} \tau^{q+1} &+& C \Big( \tau \sum_{k=0}^{n-1} \sum_{i=1}^s \|R_h(.,t_k+c_i\tau) \|_{H^{-1}(\Ga_h(t_k+c_i\tau))}^2 \Big)^{\frac{1}{2}} + C \|E_h^0\|_{L^2(\Ga_h(t_0))},
    \end{eqnarray*}}
    where the constant $C$ is independent of $h$, but depends on $T$, and
    \begin{equation*}
        \disp \tilde{\beta}_{h,q}^2 = \int_0^T \sum_{\ell=1}^{q+2} \| (\amath)^{(\ell)} (P_h u)(.,t) \|_{L^2(\Ga_h\t)} + \sum_{\ell=1}^{q+1} \| \nb_{\Ga_h\t} (\amath)^{(\ell)} (P_h u)(.,t) \|_{L^2(\Ga_h\t)} \d t.
    \end{equation*}
    The $H^{-1}$ norm  of $R_h$ is defined as
    \begin{equation*}
        \disp \|R_h(.,t) \|_{H^{-1}(\Ga_h\t)} := \sup_{0\neq\phi_h\in S_h\t} \frac{\la R_h(.,t),\phi_h\ra_{L^2(\Ga_h\t)}}{\|\phi_h\|_{H^{1}(\Ga_h\t)}}.
    \end{equation*}
\end{theorem}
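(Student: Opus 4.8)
The plan is to recognise that the fully discrete error $E_h^n$ is exactly the quantity controlled by the stability result, Lemma \ref{lem_rk-stability}, and then to supply two separate ingredients on its right-hand side: the finite element residual, which carries the spatial error, and the Runge--Kutta defects, which carry the temporal error. Writing $U_h^n = \sum_j \alpha_j^n \chi_j$ and $P_h u(\cdot,t_n) = \sum_j \tilde{\alpha}_j(t_n)\chi_j$, the coefficient vector $e_n = \alpha_n - \tilde{\alpha}(t_n)$ of $E_h^n$ is precisely the error vector of the error equations \eqref{eq_error-eq-a}--\eqref{eq_error-eq-interStages}: indeed $\alpha_n$ is produced by the scheme applied to the unperturbed system \eqref{eq_ES-ODE}, while $\tilde{\alpha}$ solves the perturbed system \eqref{eq_rk_pertubated} with right-hand side $M(t)r(t)$ given by the residual \eqref{eq_residual}. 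By the definition of the norms \eqref{eq_normdefs} one has $\|E_h^n\|_{L^2(\Ga_h(t_n))} = \normt{e_n}{M_n}$ and $\|\nbgh E_h^j\|_{L^2} = \normt{e_j}{A_j}$, so that, after taking square roots, the left-hand side of the theorem is controlled by the left-hand side of Lemma \ref{lem_rk-stability}. It therefore suffices to estimate each term on the right-hand side of that lemma.

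First I would dispose of the residual term. A short computation with the $L^2$-inner product shows that the functional $\phi_h\mapsto \langle R_h,\phi_h\rangle_{L^2(\Ga_h)}$ is represented by the vector $M r$, since $\langle M r, z\rangle = \int_{\Ga_h} R_h Z_h$; by the very definition of $\Normstar{\cdot}{k}$ through the matrix $(A+M)^{-1}$ and of the discrete $H^{-1}$-norm as a supremum against $\|\phi_h\|_{H^1}$, duality gives $\Normstar{M_k r_k}{k} = \|R_h(\cdot,t_k)\|_{H^{-1}(\Ga_h)}$ exactly. This identifies the residual contribution in the stability estimate with the $H^{-1}$-residual sum in the statement, with no loss.

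The heart of the proof is the estimation of the defects $\Delta_{ni}$ and $\delta_{n+1}$, which arise from inserting the exact solution $\tilde{\alpha}$ of \eqref{eq_rk_pertubated} into the scheme. These are the classical quadrature defects of the method applied to the time-dependent vector $t\mapsto M(t)\tilde{\alpha}(t)$, so by the stage order $q$ and classical order $p\geq q+1$ of Assumption \ref{assump_RK-method-assumptions} a Peano-kernel (Taylor) argument gives $\Delta_{ni} = O(\tau^{q+1})$ and $\delta_{n+1} = O(\tau^{p+1})$, with the bounds carried by integrals of time derivatives of $M(t)\tilde{\alpha}(t)$ over $[t_n,t_{n+1}]$. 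The essential step is to convert these vector derivatives into intrinsic surface quantities: differentiating $\langle M(t)\tilde{\alpha}(t),z\rangle = \int_{\Ga_h(t)}(P_h u)Z_h$ for a frozen coefficient vector $z$ and using the transport property \eqref{eq_ES-transport-prop} (so $\amath Z_h = 0$) together with the Leibniz formula \eqref{eq_leibniz} expresses $\frac{\d}{\d t}(M\tilde{\alpha})$ through the discrete ALE material derivative $\amath(P_h u)$ plus lower-order geometric terms. Iterating this identity replaces the $\ell$-th vector derivative by the ALE material derivatives $(\amath)^{(\ell)}(P_h u)$, and measuring the defects in the $M_{ni}^{-1}$- and $A_{ni}$-norms produces exactly the $L^2$- and gradient contributions that assemble, after summation over $n$ and a discrete Cauchy--Schwarz step, into $\tilde{\beta}_{h,q}\tau^{q+1}$ (the $\delta/\tau$ term contributing only at the higher order $\tau^{p}\leq\tau^{q+1}$). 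Inserting all of this into Lemma \ref{lem_rk-stability} and absorbing the initial term $\normt{e_0}{M_0} = \|E_h^0\|_{L^2}$ then yields the claim.

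The main obstacle I anticipate is precisely this conversion of the defects into surface quantities while tracking the time dependence of $M(t)$, $A(t)$ and $\B(t)$: because the mass matrix itself evolves, each time derivative generates additional geometric terms through \eqref{eq_leibniz}, so the leading $(q+1)$-th derivative of $M\tilde{\alpha}$ couples $\amath$-derivatives of $P_h u$ of several orders, which is what forces the sums up to order $q+2$ (in $L^2$) and $q+1$ (in the gradient) inside $\tilde{\beta}_{h,q}$. Keeping the correct powers of $\tau$ through these product-rule expansions, and checking that the $\B$-dependent part of the perturbed dynamics does not degrade the defect order, is the delicate bookkeeping; the remainder follows the pattern of \cite{DziukLubichMansour_rksurf} and \cite{diss_Mansour}.
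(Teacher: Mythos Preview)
Your proposal is correct and follows essentially the same route as the paper: the paper's own proof consists of invoking Lemma~\ref{lem_rk-stability} and then pointing to \cite{DziukLubichMansour_rksurf} and \cite{diss_Mansour} for the defect estimation, with the sole modification that the discrete ALE material derivative replaces the material derivative throughout. Your outline --- identify $e_n$ with the coefficient vector of $E_h^n$, match $\Normstar{M_k r_k}{k}$ with the discrete $H^{-1}$-norm of $R_h$, and bound the Runge--Kutta defects $\Delta_{ni},\delta_{n+1}$ via the stage/classical order together with the transport property and Leibniz formula to produce $\tilde{\beta}_{h,q}\tau^{q+1}$ --- is exactly that argument spelled out.
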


\bigskip
The version with the classical order $p$ from \cite{DziukLubichMansour_rksurf} Theorem 8.2 (or \cite[Theorem 5.2]{diss_Mansour}) also holds in the ALE case, if the stronger regularity conditions are satisfied:
{\setlength\arraycolsep{.13889em}
\begin{eqnarray*}
    \Bigg| M\t\inv \frac{\d^{k_j-1}}{\d t^{k_j-1}}\Big( A\t M\t\inv \Big) \dotsm \frac{\d^{k_1-1}}{\d t^{k_1-1}}\Big( A\t M\t\inv \Big)  \frac{\d^{\tilde{k}-1}}{\d t^{\tilde{k}-1}}\Big( M\t\tilde{\alpha}\t \Big)\Bigg|_{M\t} &\leq& \gamma, \\
    \Bigg| M\t\inv \frac{\d^{k_j-1}}{\d t^{k_j-1}}\Big( A\t M\t\inv \Big) \dotsm \frac{\d^{k_1-1}}{\d t^{k_1-1}}\Big( A\t M\t\inv \Big)  \frac{\d^{\tilde{k}-1}}{\d t^{\tilde{k}-1}}\Big( M\t\tilde{\alpha}\t \Big)\Bigg|_{A\t} &\leq& \gamma,
\end{eqnarray*}}
for all $k_j\geq1$ and $\tilde{k}\geq q+1$ with $k_1+\dotsb+k_j+\tilde{k}\leq p+1$.

\begin{theorem}\label{thm_R-K-errors_order-p}
    Consider the arbitrary Lagrangian Eulerian evolving surface finite element method as space discretization of the parabolic problem \eqref{eq_ES-PDE-strong-form}, with time discretization by an $s$-stage implicit Runge--Kutta method satisfying Assumption \ref{assump_RK-method-assumptions} with $p>q+1$. Assuming the above regularity conditions. There exists $\tau_0>0$ independent of $h$, \st\ for $\tau \leq \tau_0$, for the error $E_h^n=U_h^n-P_h u(.,t_n)$ the following estimate holds for $t_n=n\tau \leq T$:
    {\setlength\arraycolsep{.13889em}
    \begin{eqnarray*}
        \disp \|E_h^n\|_{L^2(\Ga_h(t_n))} &+& \Big( \tau \sum_{j=1}^n \|\nb_{\Ga_h(t_j)} E_h^j \|_{L^2(\Ga_h(t_j))}^2 \Big)^{\frac{1}{2}} \\
        \disp \leq C_0 \tau^{p} &+& C \Big( \tau \sum_{k=0}^{n-1} \sum_{i=1}^s \|R_h(.,t_k+c_i\tau) \|_{H^{-1}(\Ga_h(t_k+c_i\tau))}^2 \Big)^{\frac{1}{2}} + C \|E_h^0\|_{L^2(\Ga_h(t_0))},
    \end{eqnarray*}}
    where the constant $C_0$ is independent of $h$, but depends on $T$ and $\gamma$.
\end{theorem}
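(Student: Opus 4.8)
The plan is to combine the Runge--Kutta stability estimate of Lemma~\ref{lem_rk-stability} with a sharpened consistency analysis, following \cite{DziukLubichMansour_rksurf} Theorem~8.2 and the energy technique of \cite{LubichOstermann_RK}. Writing $e_n$ for the coefficient vector of $E_h^n = U_h^n - P_h u(.,t_n)$, the function $P_h u$ solves the perturbed system~\eqref{eq_rk_pertubated} with residual $r$ defined through~\eqref{eq_residual}; by the definition of the norms in~\eqref{eq_normdefs} the left-hand side of Lemma~\ref{lem_rk-stability} is exactly $\|E_h^n\|_{L^2}^2 + \tau\sum_j \|\nb_{\Ga_h} E_h^j\|_{L^2}^2$, the term $\normt{e_0}{M_0}$ is $\|E_h^0\|_{L^2}$, and the residual contribution coincides with the $H^{-1}$ residual term $\tau\sum_k\sum_i \|R_h(.,t_k+c_i\tau)\|_{H^{-1}}^2$ (indeed $\Normstar{M_{ki}r_{ki}}{t_{ki}} = \|R_h(.,t_{ki})\|_{H^{-1}}$). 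Hence the whole task reduces to proving that the defect contributions
\[
  \tau \sum_k \normt{\nicefrac{\delta_k}{\tau}}{M_k}^{2} \quad\text{and}\quad \tau \sum_k \sum_i \Bigl( \normt{M_{ki}^{-1}\Delta_{ki}}{M_{ki}}^{2} + \normt{M_{ki}^{-1}\Delta_{ki}}{A_{ki}}^{2}\Bigr)
\]
are both bounded by $C_0^2\tau^{2p}$.

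For the step defects $\delta_n$ this is immediate from the classical order $p$: Taylor expanding the exact solution in~\eqref{eq_error-eq-b} and using the order conditions gives $\normt{\nicefrac{\delta_n}{\tau}}{M_n} = O(\tau^{p})$, so the sum over $t_n \leq T$ is $O(\tau^{2p})$. The internal stage defects $\Delta_{ni}$, however, carry only the stage order $q$, so the crude bound $\Delta_{ni} = O(\tau^{q+1})$ would limit the rate to $\tau^{q+1}$ --- this is precisely the estimate used in Theorem~\ref{thm_R-K-errors}. The heart of the proof is to recover the full order $p>q+1$ from these low-order stage defects. Following \cite{LubichOstermann_RK}, I would Taylor--expand $M_{ni}\inv \Delta_{ni}$ into powers $\tau^{q+1},\dots,\tau^{p}$ whose coefficients are iterated time derivatives of $P_h u$; repeatedly eliminating $\diff(M\tilde\alpha)$ by the differentiated ODE $\diff(M\tilde\alpha) = -(A+\B)\tilde\alpha + Mr$ rewrites these coefficients exactly as the nested quantities $M\inv \frac{\d^{k_j-1}}{\d t^{k_j-1}}(AM\inv)\cdots\frac{\d^{\tilde k-1}}{\d t^{\tilde k-1}}(M\tilde\alpha)$ occurring in the regularity hypotheses. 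A discrete summation-by-parts in the error recursion, which relies on stiff accuracy ($c_s=1$, $b_j=a_{sj}$), then shows that all sub-$p$ powers cancel up to boundary terms and an $O(\tau^p)$ remainder, so that under the uniform bound $\gamma$ the full $\Delta$--contribution is $O(\gamma^2\tau^{2p})$.

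The only ingredient not already present in \cite{DziukLubichMansour_rksurf} is the ALE matrix $\B\t$, and I expect verifying its compatibility with this argument to be the main (though essentially routine) obstacle. In the stability estimate $\B$ has already been absorbed inside Lemma~\ref{lem_rk-stability} via Lemma~\ref{lemma-B_estimate}, so nothing new is required there. In the consistency step it enters only through the right-hand side $-(A+\B)\tilde\alpha$ of the differentiated ODE, hence inside the nested coefficients above as extra factors $\B M\inv$ interleaved with the $AM\inv$ factors. Since Lemma~\ref{lemma-B_estimate} bounds $\la \B\t z | y\ra$ by $c_\ale\,\normt{z}{M\t}\normt{y}{A\t}$ --- that is, $\B$ behaves like the stiffness term up to one gradient factor --- one checks that these extra factors are dominated, in both the $\normt{\cdot}{M}$ and $\normt{\cdot}{A}$ norms, by the same $A$--type quantities that the regularity conditions bound by $\gamma$; thus the conditions stated with $AM\inv$ alone already suffice, uniformly in $h$. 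Inserting the resulting bound $O(\tau^{2p})$ together with the $\delta$-- and residual estimates into Lemma~\ref{lem_rk-stability} and taking square roots yields the claimed estimate, with $C_0$ depending on $T$ and $\gamma$.
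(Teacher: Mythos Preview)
Your approach is correct and coincides with the paper's: both invoke the stability Lemma~\ref{lem_rk-stability} and then defer to \cite{DziukLubichMansour_rksurf} Theorem~8.2 and the underlying \cite{LubichOstermann_RK} argument for the order-$p$ defect analysis, the only modification being the replacement of material derivatives by their discrete ALE counterparts. Your explicit discussion of how the $\B$ term enters the consistency step is in fact more detailed than the paper's one-line proof, which does not comment on this point; note, however, that your claim that Lemma~\ref{lemma-B_estimate} lets the stated $AM^{-1}$ regularity conditions absorb all $\B M^{-1}$ factors is asserted rather than verified, and would need a more careful norm argument if made rigorous.
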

The proof of these theorems are using Lemma \ref{lem_rk-stability}. Otherwise they are the same as the ones in \cite{DziukLubichMansour_rksurf} (or in \cite{diss_Mansour}), but one has to work with the discrete ALE material derivatives.

\subsubsection{Backward differentiation formulae}
We prove the analogous result of \cite{LubichMansourVenkataraman_bdsurf} Theorem~5.1 (\cite{diss_Mansour} Theorem~5.3).
\begin{theorem}\label{thm_BDF-errors}
    Consider the arbitrary Lagrangian Eulerian evolving surface finite element method as space discretization of the parabolic problem \eqref{eq_ES-PDE-strong-form} with time discretization by a $k$-step backward difference formula of order $k\leq5$. Assume that $P_h u$ has \co\ discrete ALE material derivatives up to order $k+1$. Then there exists $\tau_0>0$, independent of $h$, \st\ for $\tau \leq \tau_0$, for the error $E_h^n=U_h^n-P_h u(.,t_n)$ the following estimate holds for $t_n=n\tau \leq T$:
    {\setlength\arraycolsep{.13889em}
    \begin{eqnarray*}
        \disp \|E_h^n\|_{L^2(\Ga_h(t_n))} &+& \Big( \tau \sum_{j=1}^n \|\nb_{\Ga_h(t_j)} E_h^j \|_{L^2(\Ga_h(t_j))}^2 \Big)^{\frac{1}{2}} \\
        \disp \leq C \tilde{\beta}_{h,k} \tau^{k} &+& \Big( \tau \sum_{j=1}^n \|R_h(.,t_j) \|_{H^{-1}(\Ga_h(t_j))}^2 \Big)^{\frac{1}{2}} + C \max_{0\leq i \leq k-1} \|E_h^i\|_{L^2(\Ga_h(t_i))},
    \end{eqnarray*}}
    where the constant $C$ is independent of $h$, but depends on $T$, and
    \begin{equation*}
        \disp \tilde{\beta}_{h,k}^2 = \int_0^T \sum_{\ell=1}^{k+1} \| (\amath)^{(\ell)} (P_h u)(.,t) \|_{L^2(\Ga_h(t))} \d t.
    \end{equation*}
\end{theorem}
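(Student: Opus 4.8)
The plan is to combine the stability estimate of Lemma~\ref{lemma_BDF-stability} with an optimal-order bound on the semidiscrete residual $R_h$, mirroring the structure of \cite{LubichMansourVenkataraman_bdsurf} Theorem~5.1. The key observation is that the error equation for $E_h^n = U_h^n - P_h u(.,t_n)$, when written in coordinates, is exactly the perturbed BDF error equation~\eqref{eq_BDF-error-eq} whose defects are governed by the residual vector $r_n = r(.,t_n)$ and the starting errors $e_i = \alpha^i - \tilde{\alpha}(t_i)$. Thus the abstract stability bound in Lemma~\ref{lemma_BDF-stability} applies verbatim once we identify $d_n$ with $\tau$-scaled quantities built from $R_h$.

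First I would translate the matrix--vector norms appearing in Lemma~\ref{lemma_BDF-stability} back into function-space norms on the discrete surface. By the definition~\eqref{eq_normdefs} of the norms, $|e_n|_{M_n} = \|E_h^n\|_{L^2(\Ga_h(t_n))}$ and $|e_n|_{A_n} = \|\nb_{\Ga_h(t_n)} E_h^n\|_{L^2(\Ga_h(t_n))}$, so the left-hand side of the stability estimate is precisely the quantity we wish to bound. The dual norm $\|d_j\|_{\ast,j}^2 = d_j^T (A_j + M_j)\inv d_j$ must be shown to coincide, up to the factor $\tau$, with the discrete $H^{-1}$ norm of $R_h$ as defined in the theorem statement; this follows because $(A_j + M_j)\inv$ is the Riesz representation of the $H^1(\Ga_h)$-dual pairing on $S_h$, so that $\|d_j\|_{\ast,j} = \sup_{\phi_h}\la d_j, \phi_h\ra / \|\phi_h\|_{H^1}$. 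Tracing the perturbation through~\eqref{eq_perturbed_ODE_BDF}, the defect $d_j$ equals $\tau M_j r_j$, and unwinding the definition~\eqref{eq_residual} of the residual shows $\la M_j r_j, \phi_h \ra = \la R_h(.,t_j), \phi_h\ra_{L^2(\Ga_h(t_j))}$, yielding the $H^{-1}$ term with the correct $\tau$ power.

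The remaining task is to bound the defect contribution $\tau\sum_j \|d_j\|_{\ast,j}^2$ by the consistency term $C\tilde{\beta}_{h,k}^2\tau^{2k}$. This is where the BDF truncation analysis enters: the defects $d_j$ measure how well the $k$-step formula reproduces $\diff(M\tilde\alpha)$, and by a Taylor expansion (in the ALE material-derivative sense) the local error is of size $\tau^{k+1}$ times an integral of the $(k+1)$-st discrete ALE material derivative of $P_h u$, which is exactly what $\tilde{\beta}_{h,k}^2$ collects. Summing over $n$ steps and accounting for the factor $\tau$ in front produces the $\tau^{2k}$ rate. I would argue this by quoting the corresponding consistency estimate from \cite{LubichMansourVenkataraman_bdsurf}, noting that the only modification is the systematic replacement of material derivatives by their ALE counterparts $\amath$, which is legitimate because the transport identities of Lemma~\ref{lemma_transport-prop} hold in the ALE form.

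The main obstacle, and the point requiring genuine care rather than citation, is the handling of the new ALE term $\B_n$ inside the consistency argument. In the pure Lagrangian setting of \cite{LubichMansourVenkataraman_bdsurf} this term is absent, so one must verify that the extra $\B\t$ contribution to the residual is also of optimal order $h^2$ and does not degrade the time-consistency estimate. Here Lemma~\ref{lemma_estiamtion-of-new-form} is decisive: it shows that the discrete bilinear form $m_h(Z_h,(\Wh-V_h)\cdot\nbgh\phi_h)$ differs from its continuous lifted counterpart by $O(h^2)$, so the ALE term in~\eqref{eq_residual} contributes only at the optimal spatial rate and is absorbed into the $H^{-1}$ residual term together with the standard stiffness and mass contributions. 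With the stability bound, the translation of norms, and the consistency estimate (including the ALE term) in hand, the three pieces assemble directly into the claimed estimate, completing the proof.
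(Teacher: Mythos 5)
Your skeleton is the paper's own route: the paper proves this theorem by invoking the stability Lemma \ref{lemma_BDF-stability} and the consistency analysis of \cite{LubichMansourVenkataraman_bdsurf} with discrete ALE material derivatives replacing material derivatives, and your first three paragraphs reconstruct exactly that (including the correct duality identification $\|M_jr_j\|_{\ast,j}=\|R_h(.,t_j)\|_{H^{-1}(\Ga_h(t_j))}$ via the $(A_j+M_j)\inv$ norm). However, your defect bookkeeping is wrong. From \eqref{def_BDF}, \eqref{eq_perturbed_ODE_BDF} and the definition of $d_n$, one has, up to sign, $d_n = M_n r_n + \bigl[\frac{1}{\tau}\sum_{j=0}^k\delta_j M_{n-j}\tilde\alpha_{n-j}-\frac{\d}{\d t}(M\tilde\alpha)(t_n)\bigr]$, i.e.\ the residual enters $d_n$ \emph{without} any factor $\tau$: the $\tau$ multiplying $d_n$ in \eqref{eq_BDF-error-eq} comes from multiplying the whole error equation by $\tau$ and is already absorbed into the factor $\tau$ on the right-hand side of Lemma \ref{lemma_BDF-stability}. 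If you insert your claim $d_j=\tau M_jr_j$ into the stability bound $C\tau\sum_j\|d_j\|_{\ast,j}^2$, you obtain $C\tau^3\sum_j\|R_h(.,t_j)\|_{H^{-1}}^2$ instead of the term $\tau\sum_j\|R_h(.,t_j)\|_{H^{-1}}^2$ in the statement, so this step fails as written (the fix is pure bookkeeping, but it contradicts your assertion that the ``correct $\tau$ power'' comes out).

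The more substantive confusion is your final paragraph. The ALE matrix $\B$ does not enter the consistency argument at all: in forming $d_n$, both the scheme \eqref{def_BDF} and the perturbed equation \eqref{eq_perturbed_ODE_BDF} evaluate $\B\t\tilde\alpha\t$ at exactly $t=t_n$, so these contributions cancel identically; the only place where $\B$ requires genuine care is the stability estimate, and that work is already contained in Lemma \ref{lemma_BDF-stability} (via Lemma \ref{lemma-B_estimate}), which you use as a black box. Likewise, Lemma \ref{lemma_estiamtion-of-new-form} plays no role in this theorem: here $P_h$ is an \emph{arbitrary} projection, no $O(h^2)$ bound on $R_h$ is asserted, and the residual is simply carried along unestimated in the $H^{-1}$ term. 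That geometric lemma is needed later, in Theorem \ref{thm_res-bound}, where $P_h$ is specialized to the Ritz map and $\|R_h\|_{H^{-1}(\Ga_h\t)}\leq Ch^2$ is proved. So the ``main obstacle'' you identify is not an obstacle in this proof; what actually must be verified beyond citation is that the BDF truncation error of $\frac{\d}{\d t}\bigl(M\t\tilde\alpha\t\bigr)$ is bounded by $C\tilde\beta_{h,k}\tau^k$ after expressing the time derivatives of $M\t\tilde\alpha\t$ through $(\amath)^{(\ell)}(P_hu)$ using the ALE transport property \eqref{eq_ES-transport-prop} --- which your third paragraph does capture correctly.
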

The proof of this theorem is using Lemma \ref{lemma_BDF-stability} otherwise it is same as the one in \cite{LubichMansourVenkataraman_bdsurf} (or in \cite{diss_Mansour}), but one has to work with the discrete ALE material derivatives.

\subsection{Bound of the semidiscrete residual and the Ritz map}

We use nearly the same Ritz map introduced by \cite{LubichMansour_wave} Definition 8.1, but for the parabolic case a pointwise version suffices:
\begin{definition}
    For a given $z\in H^1(\Gat)$ there is a unique $\Pt z\in S_h\t$ \st\ for all $\phi_h\in S_h\t$, with the corresponding lift $\vphi_h=\phi_h^l$, we have
    \begin{equation}\label{def_eq_Ritz}
        \disp a_h^{\ast}(\Pt z,\phi_h) = a^\ast(z,\vphi_h) + m(z , (v_h-v)\cdot \nbg \vphi_h),
    \end{equation}
    where $a^{\ast}:=a+m$ and $a_h^{\ast}:=a_h+m_h$, to make the forms $a$ and $a_h$ positive definite. Then $\P z \in S_h^l\t$ is defined as the lift of $\Pt z$, i.e.\ $\P z = (\Pt z)^l$.
\end{definition}

Together with the definition of the Ritz map, we will also use the error estimates for the Ritz projection and for its material derivatives, see \cite[Theorem 8.2]{LubichMansour_wave} (one have to work with $z$ instead of $\mat z$) or \cite[Theorem 7.2 and 7.3]{diss_Mansour}. Basically the original proof suffices for the error estimates for the ALE case as well. Except, one has to revise the following estimate.

\begin{lemma}
    The error between the material velocity $v$ and the discrete lifted material velocity $v_h$ on the smooth surface can be estimated as
    \begin{equation*}
        \| (\amath)^{(\ell)} (v-v_h) \|_{L^\infty(\Ga)} + h \| \nbg (\amath)^{(\ell)} (v-v_h) \|_{L^\infty(\Ga)} \leq c h^2,
    \end{equation*}
    for $\ell\geq0$, where $(\amath)^{(\ell)}$ denotes the $\ell$-th discrete ALE material derivative
\end{lemma}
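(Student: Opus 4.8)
The plan is to recognize $v-v_h$ as a finite element interpolation error and to reduce the case $\ell\geq 1$ to the base case $\ell=0$ using the transport property \eqref{eq_ES-transport-prop}. Writing $I_h$ for the nodal interpolation operator onto $S_h\t$ (acting componentwise), the discrete velocity is exactly the interpolant, $V_h = I_h v$, so that $v_h = V_h^l = (I_h v)^l$ and $v-v_h = v-(I_h v)^l$ is the lifted interpolation error of the smooth field $v$. For $\ell=0$ the assertion is then the classical ESFEM interpolation estimate of \cite{Dziuk88,DziukElliott_ESFEM},
\[
    \|\eta - (I_h\eta)^l\|_{L^\infty(\Ga)} + h\,\|\nbg(\eta - (I_h\eta)^l)\|_{L^\infty(\Ga)} \leq c\,h^2\,\|\eta\|_{W^{2,\infty}(\Ga)},
\]
applied with $\eta=v$, where $\|v\|_{W^{2,\infty}(\Ga\t)}$ is bounded uniformly in $t$ since the velocity and the flow are smooth.

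For $\ell\geq 1$ I would first establish that discrete ALE material differentiation commutes with nodal interpolation. Since the nodes move with the ALE velocity, $a_j\t=\ale(a_j(0),t)$, their values obey $\tfrac{\d}{\d t}v(a_j\t,t)=(\amat v)(a_j\t,t)$, while the basis functions satisfy $\amath\chi_j=0$. Treating the coefficients as spatially constant on $\Ga_h$ and applying $\amath=\pa_t+\Wh\cdot\nb$ termwise gives
\[
    \amath V_h = \sum_{j=1}^N \tfrac{\d}{\d t}\big(v(a_j\t,t)\big)\chi_j + \sum_{j=1}^N v(a_j\t,t)\,\amath\chi_j = I_h(\amat v),
\]
and iterating yields $(\amath)^{(\ell)}V_h = I_h\big((\amat)^{(\ell)}v\big)$. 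Because the lift commutes with $\amath$ (as already used in Lemma \ref{lemma_transport-prop}), this lifts to $(\amath)^{(\ell)}v_h=\big(I_h(\amat)^{(\ell)}v\big)^l$. Hence $(\amath)^{(\ell)}(v-v_h)$ differs from the \emph{smooth} interpolation error $(\amat)^{(\ell)}v-\big(I_h(\amat)^{(\ell)}v\big)^l$ only by the mismatch $\big((\amath)^{(\ell)}-(\amat)^{(\ell)}\big)v$ between the two material derivatives acting on the smooth field $v$.

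The smooth interpolation error is then controlled by the base estimate applied with $\eta=(\amat)^{(\ell)}v$, which is again smooth and bounded in $W^{2,\infty}(\Ga\t)$ uniformly in $t$ and $\ell$ by smoothness of $\ale$ and $v$. For the mismatch I would use that on the smooth surface $\amath$ and $\amat$ differ only through their velocities, $\amath-\amat=(w_h-w)\cdot\nbg$ with $w_h=\Wh^l$; expanding $(\amath)^{(\ell)}-(\amat)^{(\ell)}$, every resulting term carries at least one factor $w-w_h$ or one of its ALE material derivatives. Since $w-w_h=w-(I_h w)^l$ is the interpolation error of the ALE velocity, the very statement being proved, \emph{applied to $w$ in place of $v$}, bounds these factors by $c\,h^2$ (and their tangential gradients by $c\,h$). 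I would therefore run a joint induction on $\ell$ for $v$ and $w$ simultaneously: the base case is interpolation, and in the inductive step the smooth interpolation error is $O(h^2)$ while the mismatch is absorbed into higher order via the inductive hypothesis, producing the claimed bound with the gradient weighted by $h$.

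The hard part will be the bookkeeping in the last step: organizing the commutation identities so that the only genuinely $h$-dependent quantities are lifted interpolation errors of smooth fields, and checking that the $\amath$-versus-$\amat$ mismatch is of order $h^2$ (respectively $h$ for its tangential gradient) at \emph{every} order $\ell$. This rests on the exactness of the transport property $\amath\chi_j=0$ and of the lift--material-derivative commutation (which introduce no geometric error), on the uniform-in-$\ell$ boundedness of the continuous ALE material derivatives $(\amat)^{(\ell)}v$ and $(\amat)^{(\ell)}w$, and on the geometric estimates of Lemma \ref{lemma_geometric-est}.
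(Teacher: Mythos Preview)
Your argument rests on the identification $v_h=(I_h v)^l$ (and, in the mismatch step, $w_h=(I_h w)^l$), so that $v-v_h$ is a pure lifted interpolation error. That is not what $v_h$ and $w_h$ mean here. The quantity $w_h$ is the velocity on $\Gamma$ with which \emph{lifted} points move: if $x(t)\in\Gamma_h$ evolves with $\Wh$ and $p(x(t),t)\in\Gamma$ is its lift via \eqref{eq:21}, then $w_h:=\dot p$. Because the lift map is itself time--dependent, one has (cf.\ \cite{diss_Mansour}, eq.~(6.6))
\[
w_h=(\pr-d\wein)\,\Wh-(\partial_t d)\,\nu-d\,\partial_t\nu,
\]
which differs from $\Wh^{\,l}$ by $O(h^2)$ geometric corrections. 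The paper then takes $v_h:=w_h-w_h^{\ale}$ with $w_h^{\ale}=(I_h(w-v))^{l}=(\Wh-V_h)^{l}$; this is exactly the relation $w_h-v_h=(\Wh-V_h)^{l}$ used implicitly in the proof of Lemma~\ref{lemma_estiamtion-of-new-form}. Hence $v_h-V_h^{\,l}=w_h-\Wh^{\,l}\neq 0$, and the correct decomposition is
\[
v-v_h=\pr(w-I_hw)+(I_hw^{\ale}-w^{\ale})+d\,(\wein\Wh+\partial_t\nu),
\]
i.e.\ interpolation errors of $w$ and of the tangential ALE part $w^{\ale}=w-v$, \emph{plus} a genuine geometric remainder that must be handled with Lemma~\ref{lemma_geometric-est}.

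Your scheme therefore misses precisely this remainder, and the misidentification recurs when you write ``$w-w_h=w-(I_hw)^l$'' in the mismatch argument. The commutation $(\amath\phi_h)^l=\amath(\phi_h^l)$ you invoke is fine for discrete functions, but $v_h$ is not a lifted finite element function, so $(\amath)^{(\ell)}v_h\neq\bigl(I_h(\amat)^{(\ell)}v\bigr)^{l}$. Once one adopts the correct definition, the paper's route---inserting the explicit formula for $w_h$, peeling off the normal part via $v\normal=w\normal=-(\partial_t d)\nu$, and then using the bounds $\|(\amath)^{(\ell)}d\|_{L^\infty(\Gamma_h)}\le ch$ from Lemma~\ref{lemma_geometric-est} together with $\amath\chi_j^l=0$---is what controls the extra geometric term at every order~$\ell$.
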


\begin{proof}
    The key trick of the proof is expressing $w_h$ by following a material point, see
    \cite{diss_Mansour} equation (6.6), and that the normal component of $v$ and $w$ is equal. We also use the fact that (using the geometric estimates, Lemma \ref{lemma_geometric-est}) it is easy to prove the same estimate for the ALE velocity $w_h$.

    \bigskip
    (a) For $\ell=0$: the velocity $v_h$ can be expressed as

    \begin{equation*}
        v_h + w_h^{\ale} = w_h =(\pr-d\wein)\Wh - (\pa_t d) \nu - d \pa_t\nu,
    \end{equation*}
    where $-(\pa_td)\nu$ is just the normal component of $v$, denoted by $v\normal$. The superscript $\ale$ denotes the purely tangential ALE component, i.e.\ $w^{\ale}=w-v$. Further by $I_h$ we denote the finite element interpolation operator (which has its usual estimations). Then by expressing $v_h$ from above and using \eqref{eq_v-w_tangent} (i.e.\ $v\normal=w\normal$), we have
    {\setlength\arraycolsep{.13889em}
    \begin{eqnarray}
        v-v_h &=& v - v\normal - \pr W_h + w_h^{\ale} + d(\wein W_n+\pa_t\nu) \nonumber \\
              &=& (w-w\normal - \pr \Wh) + (w_h^{\ale}-w^{\ale}) + d(\wein\Wh+\pa_t\nu) \nonumber \\
              &=& \pr (w - I_h w) + (I_h w^{\ale}-w^{\ale}) + d(\wein\Wh+\pa_t\nu) \label{eq_v_diff_express}.
    \end{eqnarray}}
    Then we can estimate as
    {\setlength\arraycolsep{.13889em}
    \begin{eqnarray*}
        |v-v_h| &\leq& |\pr (w - I_h w)| + |I_h w^{\ale}-w^{\ale}| + |d(\wein\Wh+\pa_t\nu)| \leq ch^2.
    \end{eqnarray*}}
    Here the first two parts were estimated by interpolation estimates (for piecewise linear interpolants), while the last part was estimated using the geometric estimates of Lemma \ref{lemma_geometric-est}.

    We use the fact that $\nbg d=0$ and \eqref{eq_v_diff_express}, then estimate as
    {\setlength\arraycolsep{.13889em}
    \begin{eqnarray*}
        |\nbg(v-v_h)| \leq c|w - I_h w| + c|\nbg(w - I_h w)| + |\nbg(I_h w^{\ale}-w^{\ale})| + ch^2 \leq c h.
    \end{eqnarray*}}

    \bigskip
    (b) For $\ell=1$, we have $\amath \chi_j^l=0$ (transport property). Again Lemma \ref{lemma_geometric-est} implies

    {\setlength\arraycolsep{.13889em}
    \begin{eqnarray*}
        |\amath(v-v_h)| &\leq& |(\amath \pr) (w - I_h w)| + |\pr (\amath w - I_h \amath w)| + |I_h \amath w^{\ale} - \amath w^{\ale}| \\
        &+& |(\amath d)(\wein\Wh+\pa_t\nu)| + |d \amath (\wein\Wh+\pa_t\nu)| \leq c h^2.
    \end{eqnarray*}}

    For the gradient part we have $\nbg \amath d=0$, and we obtain
    {\setlength\arraycolsep{.13889em}
    \begin{eqnarray*}
        |\nbg\amath(v-v_h)| &\leq& c|w - I_h w| + c|\nbg(w - I_h w)| + c|\amath(w - I_h w)| \\
                      &+&    c|\nbg(\amath w - I_h \amath w)| + |\nbg(I_h \amath w^{\ale}-\amath w^{\ale})|+ ch^2 \leq c h.
    \end{eqnarray*}}

    (c) For $\ell>1$ the proof is analogous.
\end{proof}

\bigskip
We now replace the projection $P_h$ in the definition of $R_h$ \eqref{eq_residual}, with the Ritz map $\Pt$, and show its optimal, second order convergence.

\begin{theorem}(Bound of the semidiscrete residual)\label{thm_res-bound}
    Let $u$, the solution of the parabolic problem, be sufficiently smooth. Then there exists a constant $C>0$ and $h_0>0$, \st\ for all $h\leq h_0$ and $t\in[0,T]$, the finite element residual $R_h$ of the Ritz map is bounded by
    \begin{equation*}
        \|R_h\|_{H\inv(\Ga_h\t)} \leq C h^2.
    \end{equation*}
\end{theorem}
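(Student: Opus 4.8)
The statement is equivalent, by the definition of the $H\inv(\Ga_h\t)$-norm, to the uniform bound $\big|\int_{\Ga_h\t} R_h \phi_h\big| \leq C h^2 \|\phi_h\|_{H^1(\Ga_h\t)}$ for every $\phi_h\in S_h\t$. The plan is to rewrite this quantity as a sum of differences between discrete and continuous bilinear forms, each of which is $O(h^2)$. Fix $\phi_h$, write $\varphi_h = \phi_h^l$ for its lift, and recall that $R_h$ is now built from the Ritz map $\Pt u$. First I would apply the discrete-surface version of the transport Lemma \ref{lemma_transport-prop} to the leading term $\diff\, m_h(\Pt u,\phi_h)$ in \eqref{eq_residual}; this produces a term $m_h(\Pt u, \amath \phi_h)$ which cancels exactly against the last summand $-\int_{\Ga_h}(\Pt u)\amath\phi_h$ of \eqref{eq_residual}. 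What remains is
\begin{equation*}
    \int_{\Ga_h\t} R_h\phi_h = m_h(\amath \Pt u,\phi_h) + a_h(\Pt u,\phi_h) + g_h(\Wh;\Pt u,\phi_h) + m_h\big(\Pt u,(\Wh-V_h)\cdot\nbgh\phi_h\big).
\end{equation*}

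Next I would produce the vanishing continuous counterpart. Since $u$ is the ALE weak solution (Lemma \ref{lemma_ALE-ES-PDE-weak}), testing with $\varphi_h$ and applying the continuous transport Lemma \ref{lemma_transport-prop} to $\diff\, m(u,\varphi_h)$ --- again cancelling the $m(u,\amat\varphi_h)$ term --- gives
\begin{equation*}
    m(\amat u,\varphi_h) + a(u,\varphi_h) + g(w;u,\varphi_h) + m\big(u,(w-v)\cdot\nbg\varphi_h\big) = 0.
\end{equation*}
Subtracting this zero from the previous identity expresses $\int_{\Ga_h} R_h\phi_h$ as four differences: a mass-type difference $m_h(\amath\Pt u,\phi_h)-m(\amat u,\varphi_h)$, a stiffness difference $a_h(\Pt u,\phi_h)-a(u,\varphi_h)$, a divergence difference $g_h(\Wh;\Pt u,\phi_h)-g(w;u,\varphi_h)$, and the new ALE difference $m_h(\Pt u,(\Wh-V_h)\cdot\nbgh\phi_h)-m(u,(w-v)\cdot\nbg\varphi_h)$.

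Each difference is then estimated by $ch^2\|\phi_h\|_{H^1(\Ga_h\t)}$. For the stiffness difference I would insert the Ritz map definition \eqref{def_eq_Ritz} (with $z=u$), which trades $a_h(\Pt u,\phi_h)-a(u,\varphi_h)$ for mass-type terms $m(u,\varphi_h)-m_h(\Pt u,\phi_h)$ together with the velocity-correction term $m(u,(v_h-v)\cdot\nbg\varphi_h)$; the purely geometric mismatches are $O(h^2)$ by Lemma \ref{lemma_estimation-of-forms}, and the correction term is absorbed using the velocity estimate. The mass and divergence differences are handled by Lemma \ref{lemma_estimation-of-forms} combined with the Ritz error bounds $\|u-\P u\|_{L^2(\Ga)}\leq ch^2$ and the analogous bounds for the discrete ALE material derivatives, so that $\amath\Pt u$ may be compared with $\amat u$. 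The genuinely new term is the ALE difference, which I would split as: (i) a geometric part, comparing $m_h(\Pt u,(\Wh-V_h)\cdot\nbgh\phi_h)$ with $m(\P u,(w_h-v_h)\cdot\nbg\varphi_h)$, bounded by $ch^2$ directly via Lemma \ref{lemma_estiamtion-of-new-form}; and (ii) a velocity part, replacing $(w_h-v_h)$ by $(w-v)$ and $\P u$ by $u$, whose error is $O(h^2)$ by the velocity estimate $\|v-v_h\|_{L^\infty(\Ga)}\leq ch^2$ (and its $w$-analogue) just established.

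Collecting the four $O(h^2\|\phi_h\|_{H^1})$ bounds and dividing by $\|\phi_h\|_{H^1(\Ga_h\t)}$ yields $\|R_h\|_{H\inv(\Ga_h\t)}\leq Ch^2$. I expect the main obstacle to be the ALE difference together with the bookkeeping of the several velocities --- the continuous $v,w$, the interpolated discrete $V_h,\Wh$, and their lifts $v_h,w_h$ --- and of the competing material derivatives $\amat$ and $\amath$: one must match the discrete transport identity (phrased in $\Wh,\amath$) against the continuous weak form (phrased in $w,\amat$) and absorb every mismatch into an $O(h^2)$ remainder, which is exactly what Lemma \ref{lemma_estiamtion-of-new-form} and the velocity estimate are designed for. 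The remaining geometric estimates are routine adaptations of the non-ALE argument of \cite{LubichMansourVenkataraman_bdsurf}.
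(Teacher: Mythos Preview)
Your argument follows the paper's proof closely: expand $m_h(R_h,\phi_h)$ via the discrete transport identity, subtract the vanishing continuous ALE weak form, replace the stiffness pair using the Ritz map definition \eqref{def_eq_Ritz}, and bound each remaining pair by $Ch^2\|\varphi_h\|_{H^1}$ via the geometric lemmas and the Ritz error estimates. Your treatment of the new ALE pair through Lemma~\ref{lemma_estiamtion-of-new-form} together with the velocity estimate is exactly the paper's argument.

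There is one point where your bookkeeping diverges from the paper and creates a small gap. You expand $\diff\,m(u,\varphi_h)$ with the \emph{continuous} ALE velocity $w$ and derivative $\amat$, which produces the divergence pair $g_h(\Wh;\Pt u,\phi_h)-g(w;u,\varphi_h)$. The paper instead applies the transport identity on $\Ga$ with the \emph{lifted discrete} velocity $w_h$ and derivative $\amath$ (Lemma~\ref{lemma_transport-prop}); this yields the pair $g_h(\Wh;\Pt u,\phi_h)-g(w_h;u,\varphi_h)$ together with an extra term $m(u,(\amat-\amath)\varphi_h)=m(u,(w-w_h)\cdot\nbg\varphi_h)$ that is absorbed into the ALE pair. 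The paper's $g$-pair is then purely geometric and directly $O(h^2)$. In your version, after stripping off the geometric and Ritz contributions you are left with $\int_\Ga \bigl(\nabla_\Ga\!\cdot(w_h-w)\bigr)\,u\,\varphi_h$; since only $\|\nabla_\Ga(w_h-w)\|_{L^\infty}\leq ch$ is available, this is \emph{not} covered by ``Lemma~\ref{lemma_estimation-of-forms} combined with the Ritz error bounds'' as you claim. The term can be rescued by an integration by parts on the closed surface $\Ga$, moving the tangential derivative off $w_h-w$ and then invoking $\|w_h-w\|_{L^\infty}\leq ch^2$, but this step must be made explicit. The paper's choice of $w_h$ on the continuous side is precisely what avoids this extra argument; it also lets the Ritz correction $m(u,(v_h-v)\cdot\nbg\varphi_h)$ cancel algebraically against part of the ALE pair (producing the clean difference $m_h(\Pt u,(\Wh-V_h)\cdot\nbgh\phi_h)-m(u,(w_h-v_h)\cdot\nbg\varphi_h)$) rather than being estimated separately as you do.
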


\begin{proof}
    (a) We start by applying the discrete ALE transport property to the residual equation \eqref{eq_residual} for $P_h=\Pt$:
    {\setlength\arraycolsep{.13889em}
    \begin{eqnarray*}
         m_h(R_h,\phi_h) &=& \diff m_h(\Pt u,\phi_h) + a_h(\Pt u,\phi_h) - m_h(\Pt u,\amath\phi_h) + m_h(\Pt u,(\Wh-V_h)\cdot\nbgh\phi_h) \\
         &=& m_h(\amath\Pt u,\phi_h) + a_h(\Pt u,\phi_h) + g_h(\Wh;\Pt u,\phi_h) + m_h(\Pt u,(\Wh-V_h)\cdot\nbgh\phi_h).
    \end{eqnarray*}}

    \medskip
    (b) We continue by the transport property with discrete ALE material derivatives from Lemma \ref{lemma_transport-prop}, but for the ALE weak form (from Lemma \ref{lemma_ALE-ES-PDE-weak}), with $\vphi:=\vphi_h=(\phi_h)^l$:
    {\setlength\arraycolsep{.13889em}
    \begin{eqnarray*}
         0 &=& \diff m(u,\vphi_h) + a(u,\vphi_h) - m(u,\amat\vphi_h) + m(u,(w-v)\cdot\nbg\vphi_h) \\
         &=& m(\amath u,\vphi_h) + a(u,\vphi_h) + g(w_h;u,\vphi_h) + m(u,(w-v)\cdot\nbg\vphi_h) -
         m(u,\amat\vphi_h-\amath\vphi_h).
    \end{eqnarray*}}

    For the last term we have
    \begin{equation*}
        \amat\vphi_h-\amath\vphi_h = (w-w_h) \cdot \nbg \vphi_h,
    \end{equation*}
    hence the last two terms can be collected as $\disp m(u,(w_h-v)\cdot\nbgh\phi_h)$.

    \medskip
    (c) Subtraction of the two equations yields
    {\setlength\arraycolsep{.13889em}
    \begin{eqnarray*}
        m_h(R_h,\phi_h) &=& m_h(\amath\Pt u,\phi_h) - m(\amath u,\vphi_h) \\
                        &+& g_h(\Wh;\Pt u,\phi_h) - g(w_h;u,\vphi_h) \\
                        &+& a^\ast_h(\Pt u,\phi_h) - a^\ast(u,\vphi_h) \\
                        &-& \big(m_h(\Pt u,\phi_h) - m(u,\vphi_h)\big) \\
                        &+& m_h(\Pt u,(\Wh-V_h)\cdot\nbgh\phi_h) - m(u,(w_h-v)\cdot\nbg\vphi_h).
    \end{eqnarray*}}
    By using the definition of the Ritz map, and then collecting the terms as
    {\setlength\arraycolsep{.13889em}
    \begin{eqnarray*}
                        & & m(u,(v_h-v)\cdot\nbg\vphi_h) + \\
                        &+& m_h(\Pt u,(\Wh-V_h)\cdot\nbgh\phi_h) - m(u,(w_h-v)\cdot\nbg\vphi_h) = \\
                        &=& m_h(\Pt u,(\Wh-V_h)\cdot\nbgh\phi_h) - m(u,(w_h-v_h)\cdot\nbg\vphi_h),
    \end{eqnarray*}}
    we finally obtain the following expression for the residual:
    {\setlength\arraycolsep{.13889em}
    \begin{eqnarray*}
        m_h(R_h,\phi_h) &=& m_h(\amath\Pt u,\phi_h) - m(\amath u,\vphi_h) \\
                        &+& g_h(\Wh;\Pt u,\phi_h) - g(w_h;u,\vphi_h) \\
                        &-& \big(m_h(\Pt u,\phi_h) - m(u,\vphi_h)\big) \\
                        &+& m_h(\Pt u,(\Wh-V_h)\cdot\nbgh\phi_h) - m(u,(w_h-v_h)\cdot\nbg\vphi_h).
    \end{eqnarray*}}

    \medskip
    (d) We estimate these pairs separately. By applying Lemma \ref{lemma_estiamtion-of-new-form} and the error estimate for the Ritz map  c.f.\ \cite{diss_Mansour} Theorem 7.2 and 7.3, there follows
    {\setlength\arraycolsep{.13889em}
    \begin{eqnarray*}
        m_h(\Pt u,(\Wh-V_h)\cdot\nbgh\phi_h) &-& m(\P u,(w_h-v_h)\cdot\nbg\vphi_h) \\
        &+& m(\P u-u,(w_h-v_h)\cdot\nbg\vphi_h) \leq C h^2\|\vphi_h\|_{H^1(\Ga)}.
    \end{eqnarray*}}
    Finally, the other pairs can be estimated by the same arguments (in fact they can be bounded by $C h^2\|\vphi_h\|_{L^2(\Ga\t)}$).
\end{proof}

\subsection{Error of the full ALE discretizations}

We compare the lifted fully discrete numerical solution $u_h^n:=(U_h^n)^l$ with the exact solution $u(.,t_n)$ of the evolving surface PDE \eqref{eq_ES-PDE-strong-form} (or the moving domain PDE \eqref{eq_MD-PDE-strong}), where $U_h^n = \sum_{j=1}^N \alpha_j^n\chi_j(.,t)$, where the vectors $\alpha^n$ are generated by the Runge--Kutta or BDF method.

\bigskip
Now we state and prove the main results of this paper.
\begin{theorem}[ALE ESFEM and R--K]
    Consider the arbitrary Lagrangian Eulerian evolving surface finite element method as space discretization of the parabolic problem \eqref{eq_ES-PDE-strong-form} with time discretization by an $s$--stage implicit Runge--Kutta method satisfying Assumption \ref{assump_RK-method-assumptions}. Let $u$ be a sufficiently smooth solution of the problem and assume that the initial value is approximated as
    \begin{equation*}
        \disp \|u_h^0 - (\P u)(.,0)\|_{L^2(\Ga(0))} \leq C_0 h^2.
    \end{equation*}
    Then there exists $h_0>0$ and $\tau_0>0$, \st\ for $h\leq h_0$ and $\tau \leq \tau_0$, the following error estimate holds for $t_n=n\tau \leq T$:
    \begin{equation*}
        \disp \|u_h^n - u(.,t_n)\|_{L^2(\Ga(t_n))} + h\Big( \tau \sum_{j=1}^n \|\nb_{\Ga(t_j)} u_h^j - \nb_{\Ga(t_j)} u(.,t_j)\|_{L^2(\Ga(t_j))}^2 \Big)^{\frac{1}{2}} \leq C \big( \tau^{q+1} + h^2 \big).
    \end{equation*}
    The constant $C$ is independent of $h, \ \tau$ and $n$.

    \medskip
    Assuming that we have more regularity: conditions of Theorem \ref{thm_R-K-errors_order-p} are additionally satisfied, then we have $p$ instead of $q+1$.
\end{theorem}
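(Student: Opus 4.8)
The plan is to combine the time-discretization error estimate of Theorem \ref{thm_R-K-errors} with the residual bound of Theorem \ref{thm_res-bound} and the error estimates for the Ritz map. Throughout I would fix the abstract projection $P_h$ appearing in Theorem \ref{thm_R-K-errors} and in the residual \eqref{eq_residual} to be the Ritz map $\Pt$. The starting point is the triangle inequality on the smooth surface $\Ga(t_n)$,
\begin{equation*}
    u_h^n - u(.,t_n) = \big(u_h^n - (\P u)(.,t_n)\big) + \big((\P u)(.,t_n) - u(.,t_n)\big),
\end{equation*}
which splits the total error into the lifted time-discretization error $(E_h^n)^l = u_h^n - (\P u)(.,t_n)$ and the Ritz projection error $(\P u)(.,t_n) - u(.,t_n)$. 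The same splitting is applied inside the gradient sum. Passing between the norms on $\Ga_h(t_j)$ and their lifts on $\Ga(t_j)$ is done via the norm equivalence of \cite{DziukElliott_ESFEM} Lemma 5.2, which only costs $h$-independent constants.

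For the first summand I would invoke Theorem \ref{thm_R-K-errors} with $P_h=\Pt$ and estimate its three contributions separately. The residual term is controlled directly by Theorem \ref{thm_res-bound}: since $\|R_h(.,t)\|_{H\inv(\Ga_h\t)}\leq Ch^2$ uniformly in $t$, the sum obeys $\big(\tau\sum_{k}\sum_{i}\|R_h(.,t_k+c_i\tau)\|_{H\inv}^2\big)^{1/2}\leq C\sqrt{T}\,h^2$. The initial term is bounded by the hypothesis $\|u_h^0-(\P u)(.,0)\|_{L^2(\Ga(0))}\leq C_0 h^2$ together with norm equivalence. For the consistency factor $\tilde{\beta}_{h,q}$ I would use that its defining integrand consists of $L^2$-norms of discrete ALE material derivatives $(\amath)^{(\ell)}(\Pt u)$ and their tangential gradients up to order $q+2$; by the Ritz map error estimates for material derivatives (\cite{diss_Mansour} Theorem 7.2 and 7.3) these stay $O(1)$, so $\tilde{\beta}_{h,q}\leq C$ and the consistency contribution is $C\tau^{q+1}$. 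Altogether the first summand is bounded by $C(\tau^{q+1}+h^2)$, both in the $L^2$-part and in the (unweighted) gradient-sum part.

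For the second summand I would use the Ritz map error estimates directly: $\|(\P u)(.,t_n)-u(.,t_n)\|_{L^2(\Ga(t_n))}\leq Ch^2$ and $\|\nbg((\P u)(.,t_j)-u(.,t_j))\|_{L^2(\Ga(t_j))}\leq Ch$. The $L^2$ part contributes $Ch^2$; for the gradient sum, $\big(\tau\sum_{j}\|\nbg((\P u)-u)\|^2\big)^{1/2}\leq C\sqrt{T}\,h$, which is precisely why the statement carries the prefactor $h$ in front of the gradient term: multiplying by $h$ turns this $O(h)$ Ritz-gradient error into the optimal $O(h^2)$. The matching gradient contribution from the first summand, $\big(\tau\sum_j\|\nbg(E_h^j)^l\|^2\big)^{1/2}\leq C(\tau^{q+1}+h^2)$, only improves after multiplication by $h$. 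Collecting all pieces and absorbing the $h$-prefactored terms into $C(\tau^{q+1}+h^2)$ yields the claimed estimate.

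The main obstacle is the uniform-in-$h$ boundedness of $\tilde{\beta}_{h,q}$: one must ensure that the discrete ALE material derivatives of the Ritz projection, up to order $q+2$, remain controlled, which is exactly the content of the regularity hypothesis that $\P u$ has \co\ discrete ALE material derivatives, and which relies on the material-derivative Ritz estimates together with the velocity-error lemma proved above. The order-$p$ variant is obtained by the identical argument, replacing Theorem \ref{thm_R-K-errors} by Theorem \ref{thm_R-K-errors_order-p}: under the additional regularity conditions the consistency term becomes $C_0\tau^p$, while the residual, initial, and Ritz-projection estimates are unchanged, giving $C(\tau^p+h^2)$.
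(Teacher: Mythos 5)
Your proposal is correct and follows essentially the same route as the paper's own proof: the identical splitting $u_h^n - u(.,t_n) = \bigl(u_h^n - (\P u)(.,t_n)\bigr) + \bigl((\P u)(.,t_n) - u(.,t_n)\bigr)$, with the first part handled by Theorem \ref{thm_R-K-errors} (with $P_h$ taken as the Ritz map) combined with the residual bound of Theorem \ref{thm_res-bound} and the Ritz/material-derivative estimates of \cite{diss_Mansour}, and the second part by the Ritz projection error estimates. The paper states this only in outline, whereas you correctly fill in the details (uniform bound on $\tilde{\beta}_{h,q}$, the role of the $h$-prefactor in absorbing the $O(h)$ gradient error of the Ritz map, and the order-$p$ variant via Theorem \ref{thm_R-K-errors_order-p}), all of which are consistent with the intended argument.
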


\begin{theorem}[ALE ESFEM and BDF]
    Consider the arbitrary Lagrangian Eulerian evolving surface finite element method as space discretization of the parabolic problem \eqref{eq_ES-PDE-strong-form} with time discretization by a $k$-step backward difference formula of order $k\leq5$. Let $u$ be a sufficiently smooth solution of the problem and assume that the starting values are satisfying
    \begin{equation*}
        \disp \max_{0 \leq i \leq k-1} \| u_h^i - (\P u)(.,t_i) \|_{L^2(\Ga(0))} \leq C_0 h^2.
    \end{equation*}
    Then there exists $h_0>0$ and $\tau_0>0$, \st\ for $h\leq h_0$ and $\tau \leq \tau_0$, the following error estimate holds for $t_n=n\tau \leq T$:
    \begin{equation*}
        \disp \|u_h^n - u(.,t_n)\|_{L^2(\Ga(t_n))} + h\Big( \tau \sum_{j=1}^n \|\nb_{\Ga(t_j)} u_h^j - \nb_{\Ga(t_j)} u(.,t_j)\|_{L^2(\Ga(t_j))}^2 \Big)^{\frac{1}{2}} \leq C \big( \tau^{k} + h^2 \big).
    \end{equation*}
    The constant $C$ is independent of $h, \ \tau$ and $n$.
\end{theorem}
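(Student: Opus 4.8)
The plan is to prove the estimate by the standard splitting of the error through the Ritz projection $\Pt$ of Section \ref{subsection_lift} (the Ritz map subsection), combining the time-discrete bound of Theorem \ref{thm_BDF-errors} with the residual bound of Theorem \ref{thm_res-bound} and the geometric estimates for the Ritz map. Throughout I would pass freely between norms on the discrete surface $\Ga_h(t_j)$ and on the smooth surface $\Ga(t_j)$ using the norm equivalence of \cite{DziukElliott_ESFEM}, Lemma 5.2, whose constants are independent of $h$. At each time level $t_n$ I would write
\[
  u_h^n - u(.,t_n) = \big(u_h^n - (\P u)(.,t_n)\big) + \big((\P u)(.,t_n) - u(.,t_n)\big),
\]
and estimate the two summands separately.

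The second summand is purely a matter of the Ritz map: the error estimates for $\Pt$ and its lift (the analogue of Theorems 7.2 and 7.3 in \cite{diss_Mansour}, which carry over to the ALE setting once the velocity-difference estimate established just above is available) give $\|(\P u)(.,t) - u(.,t)\|_{L^2(\Gat)} \leq C h^2$ and $\|\nbg((\P u)(.,t) - u(.,t))\|_{L^2(\Gat)} \leq C h$. Since the gradient contribution on the left-hand side carries the weight $h$ and there are $n\tau \leq T$ time levels, this summand contributes at most $C h^2$ to both the $L^2$ part and the weighted gradient part.

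For the first summand I would take the arbitrary projection $P_h$ in the definition of the residual $R_h$ to be precisely the Ritz map $\Pt$, so that $E_h^n := U_h^n - \Pt u(.,t_n)$ is exactly the quantity estimated by Theorem \ref{thm_BDF-errors} (itself a consequence of the stability Lemma \ref{lemma_BDF-stability}). The accumulated residual is controlled by Theorem \ref{thm_res-bound}: since $\|R_h(.,t_j)\|_{H^{-1}(\Ga_h(t_j))} \leq C h^2$ for every $j$, I obtain $\big(\tau \sum_{j} \|R_h(.,t_j)\|_{H^{-1}}^2\big)^{1/2} \leq (C h^4\, n\tau)^{1/2} \leq C h^2$. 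The starting error $\max_{0\leq i\leq k-1} \|E_h^i\|_{L^2}$ is bounded by $C_0 h^2$ from the hypothesis on the initial data after lifting. Multiplying the gradient part of Theorem \ref{thm_BDF-errors} by the weight $h$ turns its $O(\tau^k + h^2)$ bound into $O(h\tau^k + h^3)$, which is again dominated by $C(\tau^k + h^2)$, while the $L^2$ part needs no weight. Collecting both summands then yields the stated estimate.

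The step I expect to be the main obstacle is showing that the consistency factor $\tilde{\beta}_{h,k}$ in Theorem \ref{thm_BDF-errors} is bounded uniformly in $h$. By definition $\tilde{\beta}_{h,k}^2$ is the time integral over $[0,T]$ of the $L^2(\Ga_h\t)$-norms of the discrete ALE material derivatives $(\amath)^{(\ell)}(\Pt u)$ up to order $k+1$, and these must stay $O(1)$ as $h\to 0$. This relies on the error estimates for the material derivatives of the Ritz map together with the smoothness of $u$, so that $(\mat)^{(\ell)} u$ exists up to the required order; here one must carefully track the discrete ALE material derivatives rather than the continuous ones, exactly as in \cite{LubichMansourVenkataraman_bdsurf}. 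Once $\tilde{\beta}_{h,k} = O(1)$ is established, the three contributions assemble to $C(\tau^k + h^2)$. The Runge--Kutta counterpart is proved in the same way, using Theorem \ref{thm_R-K-errors} (resp.\ Theorem \ref{thm_R-K-errors_order-p}) in place of Theorem \ref{thm_BDF-errors} and $\tau^{q+1}$ (resp.\ $\tau^{p}$) in place of $\tau^k$.
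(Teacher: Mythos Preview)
Your proposal is correct and follows essentially the same approach as the paper: the error is split through the Ritz projection, the first part is handled by Theorem \ref{thm_BDF-errors} combined with the residual bound Theorem \ref{thm_res-bound} and the Ritz map error estimates, and the second part directly by the Ritz map error estimates. Your explicit attention to the uniform boundedness of $\tilde{\beta}_{h,k}$ in $h$ is a point the paper leaves implicit (absorbed into the cited Ritz map results of \cite{diss_Mansour}), but otherwise the arguments coincide.
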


\begin{proof}
    The global error is decomposed into two parts
    \begin{equation*}
        \disp u_h^n - u(.,t_n) = \Big(u_h^n - (\P u)(.,t_n)\Big) + \Big((\P u)(.,t_n) - u(.,t_n)\Big),
    \end{equation*}
    and the terms are estimated by previous results.

    \bigskip
    The first term is estimated by our results for Runge--Kutta or BDF methods: Theorem \ref{thm_R-K-errors} or \ref{thm_BDF-errors}, \resp, together with the residual bound Theorem \ref{thm_res-bound}, and by Theorem 7.2 and 7.3 from \cite{diss_Mansour} (or Theorem 8.2 of \cite{LubichMansour_wave}).

    \bigskip
    The second part is estimated again by the error estimates for the Ritz projection \cite{diss_Mansour} (or \cite{LubichMansour_wave} Theorem 8.2).
\end{proof}

\section{Numerical experiments}
\label{section_numerics}

We present numerical experiments for an evolving surface parabolic problem discretized by the original and the ALE evolving surface finite elements coupled with various time discretizations. The fully discrete methods were implemented in Matlab, while the initial triangulations were generated using DistMesh (\cite{distmesh}).

\medskip
The ESFEM and the ALE ESFEM case were integrated by identical codes, except the involvement of the nonsymmetric $B$ matrix and the evolution of the surface. The ODE system giving the normal movement (see \eqref{eq_normal-movement-ODE} below) was solved by the exact same time discretization method as the PDE problem (with the same step size), while the ALE map is given in \eqref{eq_ALE-map}.

\bigskip
To illustrate our theoretical results we choose a problem which was intensively investigated in the literature before, see \cite{BarreiraElliottMadzvamuse_patternformation}. Specially for ALE approach see \cite{ElliottStyles_ALEnumerics}, \cite{ElliottVenkataraman_ALEdiscrete}. We consider the evolving surface parabolic PDE \eqref{eq_ES-PDE-strong-form} over the closed surface $\Ga\t$ given by the zero level set of the distance function
\begin{equation*}
    d(x,t) := x_1^2 + x_2^2 + A\t^2 G\Big( \frac{x_3^2}{L\t^2} \Big)-A\t^2, \quad \textrm{i.e.,} \quad \Ga\t:=\{x\in \R^3 \ \big| \ d(x,t)=0\}.
\end{equation*}
Here the functions $G$, $L$ and $A$ are given as
{\setlength\arraycolsep{.13889em}
\begin{eqnarray*}
    G(s)&=&200s\big( s - \frac{199}{200}\big),\\
    L(t)&=&1 + 0.2\sin(4\pi \ t),\\
    A(t)&=&0.1 + 0.05\sin(2\pi \ t).
\end{eqnarray*}}
The velocity $v$ is the normal velocity of the surface defined by the differential equation (formulated for the nodes):
\begin{equation}\label{eq_normal-movement-ODE}
    \diff a_j = V_j \nu_j, \qquad V_j=\frac{-\pa_t d(a_j,t)}{|\nb d(a_j,t)|}, \quad \nu_j=\frac{\nb d(a_j,t)}{|\nb d(a_j,t)|}.
\end{equation}

The righthand-side $f$ is chosen as to have the function $u(x,t)=e^{-6t}x_1x_2$ to be the true solution.

Finally we give the applied ALE movement (from \cite{ElliottStyles_ALEnumerics} and \cite{ElliottVenkataraman_ALEdiscrete}):
\begin{equation}\label{eq_ALE-map}
    (a_i\t)_1= (a_0\t)_1 \frac{A\t}{A(0)}, \quad (a_i\t)_2= (a_0\t)_2 \frac{A\t}{A(0)}, \quad (a_i\t)_3= (a_0\t)_3 \frac{L\t}{L(0)},
\end{equation}
hence $d(a_i \t ,t)=0$ for every $t\in[0,T]$, for $i=1,2,\dotsc,N$.


\bigskip
In the following we compare the ALE and non-ALE methods with three spatial refinements, and integrate the evolving surface PDE with various time discretizations, with a fixed time step $\tau$, until $T=0.6$. There we compute the error vector $e\in\R^N$, representing $e_h(x,t) := u_h(x,T)-u(x,T)$ ($T = n \tau$). We also compute the following norm and seminorm of it
\begin{equation*}
   |e_h|_M = \big(e^T M(T) e\big)^{\frac{1}{2}}, \qquad  |e_h|_A = \big(e^T A(T) e\big)^{\frac{1}{2}}
\end{equation*}
which by \eqref{eq_normdefs} correspond to the $L^2$ norms of $e_h$ and $\nbgh e_h$, respectively.

The following plots show the above error norms (left $M$-norm, right $A$-norm) plotted against the time step size $\tau$ (on logarithmic scale), different error curves are representing different spatial discretizations.
l
\bigskip
In the first experiment we used the implicit Euler method as a time discretization. Figure \ref{fig: ESFEM_IE} and \ref{fig: ALE_ESFEM_IE} show the errors obtained by the backward Euler method. The convergence in time can be seen (note the reference line), while for sufficiently small $\tau$ the spatial error is dominating, in agreement with the theoretical results.
\begin{figure}[h!]
  \centering
  \includegraphics[scale=0.75]{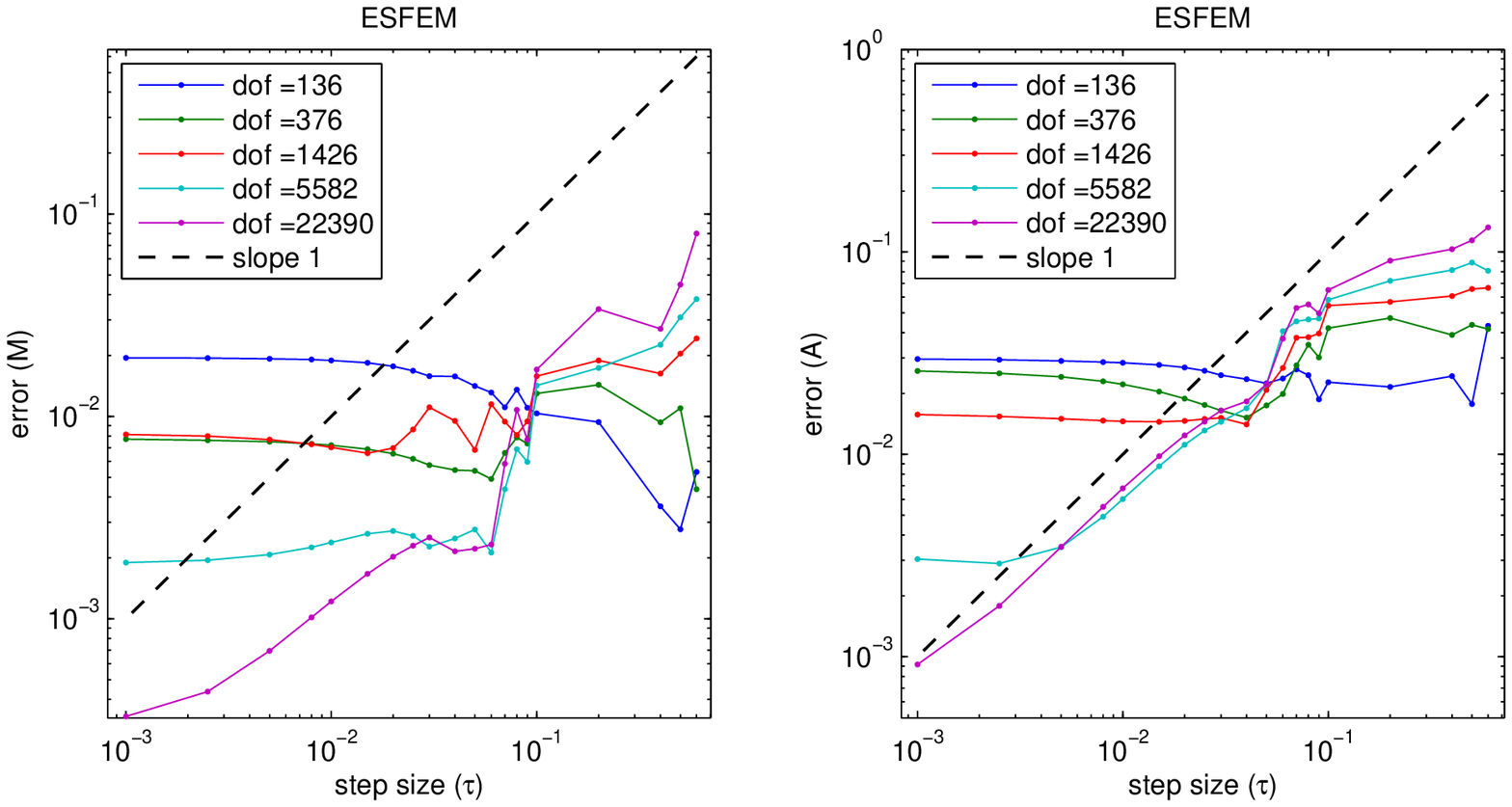}\\
  \caption{Errors of the ESFEM and the implicit Euler method}\label{fig: ESFEM_IE}

  \centering
  \includegraphics[scale=0.75]{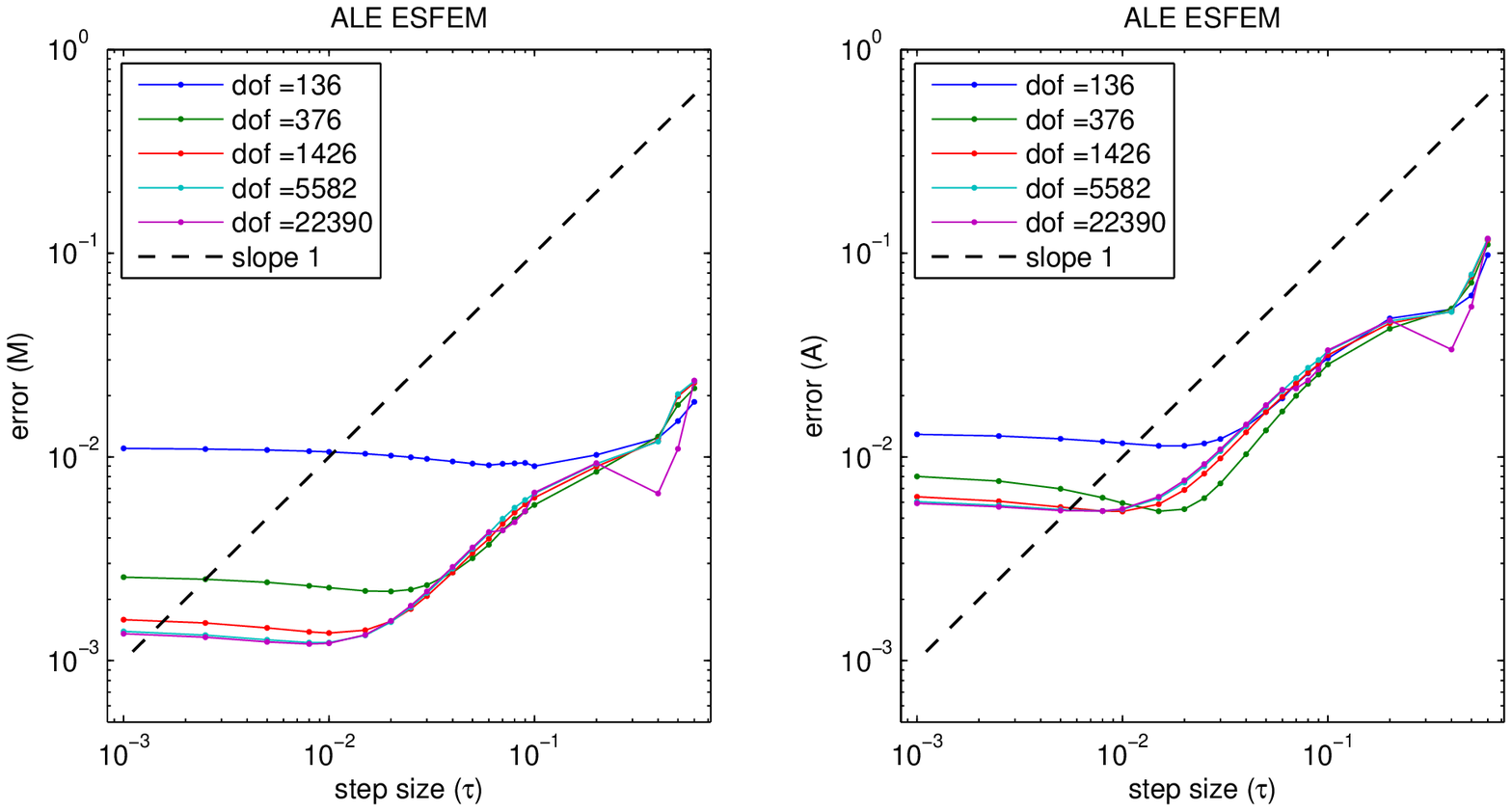}\\
  \caption{Errors of the ALE ESFEM and the implicit Euler method}\label{fig: ALE_ESFEM_IE}
\end{figure}

The figures show that the erros in the ALE ESFEM are significantly smaller than for the non-ALE.

%
%
%
%
%
%

\section*{Acknowledgement}

The authors would like to thank Prof. Christian Lubich for the invaluable discussions on the topic, and for his encouragement and help during the preparation of this paper.

\pagebreak
\bibliographystyle{alpha}
\bibliography{literature}

\end{document}